\newtheorem{theorem}{Theorem}[section]
\newtheorem{lemma}[theorem]{Lemma}
\newtheorem{corollary}[theorem]{Corollary}
\newtheorem{proposition}[theorem]{Proposition}
\newtheorem{remark}[theorem]{Remark}
\newtheorem{question}[theorem]{Question}
\newtheorem{definition}[theorem]{Definition}
\newtheorem{example}[theorem]{Example}
\newtheorem{Atheorem}{Theorem}[section]
\newtheorem{Acorollary}[Atheorem]{Corollary}
\newtheorem{Aproposition}[Atheorem]{Proposition}
\newenvironment{sproof}[1]
{\begin{proof}[#1]} {\end{proof}}
\newcommand{\Z}{\mathbb Z}
\newcommand{\Q}{\mathbb Q}
\newcommand{\GL}{\textnormal{GL}}
\newcommand{\SL}{\textnormal{SL}}
\newcommand{\sr}{\textnormal{sr}}
\newcommand{\glr}{\textnormal{glr}}
\newcommand{\E}{\textnormal{E}}
\newcommand{\Aut}{\textnormal{Aut}}
\newcommand{\Cl}{\operatorname{Cl}}
\newcommand{\V}{\operatorname{V}}
\newcommand{\Jac}{\mathcal{J}}
\newcommand{\Pic}{\operatorname{Pic}}
\newcommand{\adj}{\operatorname{adj}}
\newcommand{\trace}{\operatorname{trace}}
\newcommand{\Tr}{\operatorname{Tr}}
\newcommand{\nK}{\operatorname{N}_{K/\mathbb{Q}}}
\newcommand{\Norm}{\operatorname{N}}
\newcommand{\Fitt}{\operatorname{Fitt}}
\newcommand{\Rp}{R_{\mathfrak{p}}}
\newcommand{\Mp}{M_{\mathfrak{p}}}
\newcommand{\Nmm}{\mathcal{N}_{\mathbf{m}}}
\newcommand{\ovR}{\overline{R}}
\newcommand{\Ass}{\operatorname{Ass}}
\newcommand{\Min}{\operatorname{Min}}
\newcommand{\Rt}{\widetilde{R}}
\newcommand{\Det}{\operatorname{det}}
\newcommand{\content}{\operatorname{content}}
\newcommand{\PL}{\mathbb{P}^1}
\newcommand{\br}[1]{\lbrack #1 \rbrack}
\newcommand{\bb}{\mathbf{b}}
\newcommand{\eb}{\mathbf{e}}
\newcommand{\rb}{\mathbf{r}}
\newcommand{\cb}{\mathbf{c}}
\newcommand{\hb}{\mathbf{h}}
\newcommand{\vb}{\mathbf{v}}
\newcommand{\wb}{\mathbf{w}}
\newcommand{\mb}{\mathbf{m}}
\newcommand{\ann}{\operatorname{ann}}
\newcommand{\ia}{\mathfrak{a}}
\newcommand{\ip}{\mathfrak{p}}
\newcommand{\iq}{\mathfrak{q}}
\newcommand{\iO}{\mathfrak{O}}
\def\paragraph{\@startsection{paragraph}{4}%
  \z@\z@{-\fontdimen2\font}%
  {\normalfont\bfseries}}
\title{Equivalent generating pairs of an ideal of a commutative ring}
\address{EPFL ENT CBS BBP/HBP. Campus Biotech. B1 Building, Chemin des mines, 9\\Geneva 1202, Switzerland}
\email{luc.guyot@epfl.ch}
\author{Luc Guyot}
\date{\today}
\subjclass[2020]{Primary 13E15, Secondary 13F05}
\keywords{Finitely generated ideals, special linear group, Fitting invariants, Bass rings; Hilbert modular group}
\begin{document}
\maketitle
\begin{abstract}
Let $R$ be a commutative ring with identity and let $I$ be a two-generated ideal of $R$. 
We denote by $\SL_2(R)$ the group of $2 \times 2$ matrices over $R$ with determinant $1$. 
We study the action of $\SL_2(R)$ by matrix right-multiplication on $\V_2(I)$, 
the set of generating pairs of $I$.
Let $\Fitt_1(I)$ be the second Fitting ideal of $I$. 
Our main result asserts that $\V_2(I)/\SL_2(R)$ identifies with a group of units of $R/\Fitt_1(I)$ 
via a natural generalization of the determinant if $I$ can be generated by two regular elements.
This result is illustrated in several Bass rings for which we also show that $\SL_n(R)$ acts transitively on 
$\V_n(I)$ for every $n > 2$. As an application, we derive a formula for the number of cusps of a modular group over a quadratic order.
\end{abstract}

\section{Introduction} \label{SecIntro}
Rings are supposed unital and commutative. 
The unit group of a ring $R$ is denoted by $R^{\times}$. Let $M$ be a finitely generated $R$-module. 
We denote by $\mu(M)$ the minimal number of generators of $M$.
For $n \ge \mu(M)$, we denote by $\V_n(M)$ the set of \emph{generating vectors} of $M$ of length $n$, i.e., the set of $n$-tuples in $M^n$ whose components generate $M$. 
We consider the action of $\GL_n(R)$ on $\V_n(M)$ by matrix right-multiplication. 
Let $\SL_n(R)$ be the subgroup of $\GL_n(R)$  of determinant $1$ matrices.
Let $\E_n(R)$ be the subgroup of $\SL_n(R)$ generated by the \emph{elementary matrices}, i.e., the matrices which differ from the identity by a single off-diagonal element. 
Let $G$ be a subgroup of $\GL_n(R)$. 
Two generating vectors $\mb$, $\mb' \in \V_n(M)$ are said to be \emph{$G$-equivalent}, which we also denote by $\mb' \sim_{G} \mb$, if there exists $g \in G$ such that 
$\mb' = \mb g$. 
Apart from Section \ref{SecLocalGlobal} where $M$ is any faithful two-generated module, $M$ will be a two-generated ideal of $R$.
Our chief concern is the description of the quotient $\V_2(M)/G$ with $G \in \left\{\SL_2(R) , \GL_2(R)\right\}$. 
In some applications, we also address the case $G = \SL_n(R)$ with $n > 2$, see Propositions \ref{PropENWithNGe3} and \ref{PropSL}.

Let us highlight earlier results pertaining to this topic.
When $M = R$, the elements of $\V_n(M)$ are called unimodular rows of size $n$. 
The orbit set $\V_n(R)/\GL_n(R)$ is in one-to-one correspondence with the isomorphism classes of the stably free $R$-modules of type $1$ \cite[Proposition I.4.8]{Lam06}.
It is easily seen that $\GL_2(R)$ acts transitively on $\V_2(R)$. For $n > 2$, the question as to whether $\GL_n(R)$ acts transitively on $\V_n(R)$ is central in the resolution of 
Serre's problem on projective modules \cite[Corollary I.4.5]{Lam06}. 
Das, Tikader and Zinna have described the van der Kallen Abelian group structure of $\V_{n + 1}(R)/\SL_{n + 1}(R)$ \cite{vdK83}
for $R$ a smooth affine real algebra of dimension $n \ge 2$ \cite[Theorem 1.2]{DTZ18}; see also \cite{Fas11} for seminal results of this flavor.
 
Although no result in this paper refers directly to $\E_n(R)$, let us mention 
that $\V_n(R)/\E_n(R)$ has been extensively studied and can be endowed with the structure of an Abelian group
if $R$ is of finite stable rank and $n$ is large enough \cite[Section VIII.5]{Lam06} . 
For $R = S\br{x_1, \dots, x_m}$, the ring of polynomials in $m$ indeterminates over a ring $S$, 
Murthy has determined conditions under which $\GL_n(R)$ acts transitively on $\V_n(\sum_{k = 1}^r Rx_i)$ for $r \le n \le m$  \cite{Mur03}. 

To introduce further results, let us define the \emph{determinant} $\det(M)$ of a finitely generated $R$-module $M$ as the exterior product $\bigwedge^{\mu}M$ where $\mu = \mu(M)$. 
The module $\det(M)$ is a cyclic $R$-module whose annihilator is $\Fitt_{\mu - 1}(M)$, the $\mu$-th Fitting ideal of $M$ (see Section \ref{SubSecFittAndDet} for definitions).
The \emph{determinant map} $\det: M^n \rightarrow \bigwedge^n M$ is defined by $\det(m_1, \dots, m_{\mu}) = m_1 \wedge \cdots \wedge m_n$. 
It is immediate to check that $\det(\V_{\mu}(M)) \subseteq \V_1(\det(M))$. 
For $R$ a quasi-Euclidean ring and $M$ an arbitrary finitely generated $R$-module, 
we proved that the determinant map induces a bijection from $\V_{\mu}(M)/\E_{\mu}(R)$ onto $\V_1(\det(M))$ \cite[Theorem A]{Guy17a}. 
In this paper, we aim for a similar description of $\V_2(I)/\SL_2(R)$ for $I$ a two-generated ideal of an arbitrary ring $R$. 

To state our main theorem, we need the following definitions.
An element of a ring $R$ is said to be \emph{regular} if it is not a zero-divisor. 
An ideal of $R$ is \emph{regular} if it contains a regular element.
The \emph{total ring of quotients $K(R)$} of $R$ is the localization $R[S^{-1}]$ where $S$ is the set of regular elements of $R$. 
An $R$-submodule $I$ of $K(R)$ is said to be a \emph{fractional ideal} of $R$ if there is a regular element $d$ of $R$ such that $dI \subseteq R$.
For $I$ a fractional ideal of $R$, we set $I^{-1} \Doteq \{ x \in K(R) \, \vert \, x I \subseteq R \}$.

\begin{Atheorem}[Theorem \ref{ThSL2AndFitt1I}] \label{ThSL2} 
Let $I$ be a two-generated ideal of a ring $R$ such that $\Fitt_1(I) = II^{-1} $, e.g., $I$ is generated by two regular elements. 
Then the determinant map induces an injection from $\V_2(I)/\SL_2(R)$ into $\V_1(\det(I)) \simeq (R/\Fitt_1(R))^{\times}$.
\end{Atheorem}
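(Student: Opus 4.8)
The plan is to show that the determinant map $\det\colon \V_2(I)\to \V_1(\det(I))$ is constant on $\SL_2(R)$-orbits and that the induced map on orbits is injective; the target identification $\V_1(\det(I))\simeq (R/\Fitt_1(I))^{\times}$ is the general fact recalled above that $\det(I)=\bigwedge^2 I$ is cyclic, generated by $a\wedge b$ for any $(a,b)\in\V_2(I)$, with annihilator $\Fitt_1(I)$. \emph{Well-definedness} is immediate: for $\mb=(a,b)$ and $g=(g_{ij})\in\SL_2(R)$, expanding the (alternating) wedge product gives $\det(\mb g)=\det(g)\,\det(\mb)=\det(\mb)$, so $\SL_2(R)$-equivalent pairs share the same image.

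For \emph{injectivity}, suppose $\mb=(a,b)$ and $\mb'=(a',b')$ lie in $\V_2(I)$ and satisfy $a\wedge b=a'\wedge b'$. Since $a,b$ generate $I$ and $a',b'\in I$, there is $P\in\M_2(R)$ with $\mb'=\mb P$. Applying $\det$ yields $\det(P)\,(a\wedge b)=a'\wedge b'=a\wedge b$, so $\det(P)-1$ annihilates the generator $a\wedge b$ of $\det(I)$; hence $\det(P)-1\in\Fitt_1(I)=II^{-1}$ by hypothesis. It remains to replace $P$ by a determinant-$1$ matrix carrying $\mb$ to $\mb'$, i.e.\ to absorb the discrepancy $\det(P)-1$ using the freedom to add to $P$ a matrix $N$ with $\mb N=0$.

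The key step is to produce enough such syzygy matrices and to compute their effect on the determinant. For every $z\in I^{-1}$ the column $(zb,\,-za)^{\mathsf T}$ has entries in $R$ (precisely because $z\in I^{-1}$) and satisfies $a(zb)+b(-za)=0$, so it is a syzygy of $\mb$. Taking $N$ to have columns $z_1(b,-a)^{\mathsf T}$ and $z_2(b,-a)^{\mathsf T}$ with $z_1,z_2\in I^{-1}$ gives $\mb N=0$, and a short calculation yields
\[
\det(P+N)=\det(P)+z_1 b'-z_2 a'.
\]
As $(z_1,z_2)$ ranges over $I^{-1}\times I^{-1}$, the correction $z_1 b'-z_2 a'$ ranges over $I^{-1}a'+I^{-1}b'=I^{-1}I=II^{-1}$, since $a',b'$ generate $I$. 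Because $\det(P)-1\in II^{-1}$, we may choose $z_1,z_2$ so that $\det(P+N)=1$; then $g:=P+N\in\SL_2(R)$ and $\mb g=\mb P+\mb N=\mb'$, proving $\mb\sim_{\SL_2(R)}\mb'$.

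The hard part is exactly this last step: finding syzygies \emph{with entries in $R$} yet rich enough that their determinant corrections exhaust the whole coset $II^{-1}$ to which $\det(P)$ is confined. The family $z(b,-a)^{\mathsf T}$ with $z\in I^{-1}$ does both at once, and this is where the hypothesis $\Fitt_1(I)=II^{-1}$ is used; note that no regularity of $a,b$ is needed for these columns to lie in $\M_2(R)$, so the argument covers the stated hypothesis uniformly and contains the regular-generators case as a special instance. Verifying the displayed determinant identity and the equality $I^{-1}a'+I^{-1}b'=II^{-1}$ is the only real computation.
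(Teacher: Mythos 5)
Your proof is correct and takes essentially the same route as the paper: the paper's key Lemma \ref{LemImPhi} uses exactly your syzygy matrices with columns $r(b,-a)^{\top}$ and $s(b,-a)^{\top}$, $r,s \in I^{-1}$, and the same computation $\det(A+N)-\det(A) = rb'-sa'$ to absorb the determinant discrepancy into $II^{-1} = \Fitt_1(I)$. The only piece you leave unverified is the parenthetical ``e.g.''\ claim that two regular generators force $\Fitt_1(I) = II^{-1}$, which the paper establishes separately (Lemma \ref{LemFitt1}); your main argument, under the stated hypothesis $\Fitt_1(I)=II^{-1}$, is complete.
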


We investigate in Section \ref{SecConditions} conditions under which the determinant map induces a bijection. The main outcome is

\begin{Acorollary}[Corollary \ref{CorCompleteInvariant1}] \label{CorSL2}
Let $R$ be a ring whose quotients by regular ideals have stable rank $1$. Let $I$ be an ideal of $R$ which can generated by two regular elements. 
Then the determinant map induces a bijection from $\V_2(I)/\SL_2(R)$ onto $\V_1(\det(I))$.
\end{Acorollary}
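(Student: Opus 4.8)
The plan is to deduce Corollary~\ref{CorSL2} from Theorem~\ref{ThSL2} by supplying the surjectivity that the theorem leaves open. Theorem~\ref{ThSL2} already provides an injection $\det\colon \V_2(I)/\SL_2(R) \hookrightarrow \V_1(\det(I))$ whenever $I$ is generated by two regular elements, since regular generators force $\Fitt_1(I) = II^{-1}$. So the entire content of the corollary is to verify that, under the extra hypothesis that every quotient $R/\mathfrak{a}$ by a regular ideal $\mathfrak{a}$ has stable rank $1$, this induced map is \emph{onto} $\V_1(\det(I))$. Concretely, $\V_1(\det(I))$ is the set of generators of the cyclic module $\det(I) \cong R/\Fitt_1(I)$, i.e.\ (via the identification in Theorem~\ref{ThSL2}) a coset of units of $\ov{R} \Doteq R/\Fitt_1(I)$; the task is to show every such generator is hit by the wedge $m_1 \wedge m_2$ of some generating pair $(m_1,m_2) \in \V_2(I)$.

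First I would fix a generating pair $(a,b) \in \V_2(I)$ with $a,b$ regular, giving a base point $\omega \Doteq a \wedge b \in \V_1(\det(I))$. An arbitrary element of $\V_1(\det(I))$ then has the form $\ov{u}\,\omega$ for some $u \in R$ whose image $\ov{u}$ is a unit of $\ov{R}$; surjectivity amounts to producing a generating pair $(m_1,m_2)$ of $I$ with $m_1 \wedge m_2 = \ov{u}\,\omega$. The natural candidate is to look for $(m_1,m_2)$ obtained from $(a,b)$ by a matrix of determinant $u$ modulo $\Fitt_1(I)$, so the real question is a lifting problem: given a unit $\ov{u} \in \ov{R}^{\times}$, can one realize it as the determinant (read in $\ov{R}$) of a change-of-basis matrix carrying $(a,b)$ to another generating pair? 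This is exactly where the stable rank hypothesis enters.

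The key step is to invoke $\sr(\ov{R}) = 1$ for $\ov{R} = R/\Fitt_1(I)$ (which is a quotient by the regular ideal $\Fitt_1(I) = II^{-1}$, so the hypothesis applies) to surject $\GL_2(\ov{R}) = \GL_1(\ov{R})\,\E_2(\ov{R})$, equivalently to split the determinant $\GL_2(\ov{R}) \to \ov{R}^{\times}$ through diagonal units times elementary matrices. Thus any target unit $\ov{u}$ is the determinant of a matrix $\ov{g} \in \GL_2(\ov{R})$ that is a product of a diagonal matrix $\diag(\ov{u},1)$ and elementary matrices. I would then lift $\ov{g}$ to an actual matrix $g \in \GL_2(R)$ (or directly construct the new pair) so that $(m_1,m_2) \Doteq (a,b)g$ is again a generating pair of $I$ whose wedge equals $\ov{u}\,\omega$; the elementary part preserves the wedge and the diagonal part scales it by $\ov{u}$. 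The point is that stable rank $1$ guarantees the diagonal unit can be absorbed \emph{within} $\GL_2$ rather than requiring a larger stabilization, which is what keeps us inside $\V_2(I)$.

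The main obstacle I anticipate is bookkeeping the interaction between two rings, $R$ and $\ov{R}$: the pair $(m_1,m_2)$ must be a genuine generating pair of $I$ over $R$, yet the unit $\ov{u}$ and the matrix $\ov g$ live only over $\ov{R}$, so one must check that the lifted transformation neither destroys the property of generating $I$ nor changes the wedge outside the prescribed scaling by $\ov{u}$. Verifying that the determinant map is well defined on $\V_2(I)/\SL_2(R)$ and compatible with this construction — i.e.\ that multiplying $(a,b)$ on the right by an $\SL_2(R)$ matrix fixes the wedge, while the extra diagonal unit accounts precisely for the $\ov{R}^{\times}$-ambiguity — is the crux; once the stable rank $1$ factorization is in hand, the remaining verifications are the routine compatibility checks that I would not grind through here.
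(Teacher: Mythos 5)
Your overall frame (injectivity from Theorem~\ref{ThSL2}, surjectivity as the new content, via $\Fitt_1(I)=II^{-1}$ from Lemma~\ref{LemFitt1}) matches the paper, but your surjectivity argument has a genuine gap, and it sits exactly at the step you defer as ``routine''. You apply the stable rank hypothesis to $\ovR \Doteq R/\Fitt_1(I)$ and try to realize a unit $\ov{u} \in \ovR^{\times}$ by lifting a matrix $\ov{g} = \diag(\ov{u},1)\cdot(\text{elementary factors})$ from $\GL_2(\ovR)$ back to $R$. There are only two readings of this lift, and neither works. If you insist on $g \in \GL_2(R)$ --- which is what would guarantee that $(a,b)g$ still generates $I$ --- then you must lift the diagonal unit $\ov{u}$ to a unit of $R$, i.e.\ you need $R^{\times} \to \ovR^{\times}$ to be surjective; that is property $(*)$ for $R$ itself, equivalently $\sr(R)=1$ by Proposition~\ref{PropSurjectionOfUnitGroups}, which is \emph{not} a hypothesis of the corollary and fails in the paper's main examples (quadratic orders, Dedekind domains). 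Indeed, by Lemma~\ref{LemDet}.$(i)$ every pair $(a,b)g$ with $g \in \GL_2(R)$ has determinant $\det(g)+\Fitt_1(I)$ lying in the image of $R^{\times}$, so your construction can never reach the units outside that image; yet those classes are exactly what the corollary asserts are realized (compare Proposition~\ref{PropGL2}, where $\V_2(I)/\GL_2(R)$ is nontrivial and $\GL_2(R)$ fails to act transitively). If instead you allow $g$ to be an arbitrary entrywise lift of $\ov{g}$, then nothing in your argument shows $(a,b)g$ generates $I$: generating modulo $\Fitt_1(I)I$ does not imply generating $I$. For instance, in $k[x,y]$ with $I=(x,y)$ one has $\Fitt_1(I)=I$, and $(x+x^2,y)=(x,y)\left(\begin{smallmatrix} 1+x & 0 \\ 0 & 1\end{smallmatrix}\right)$ reduces to $(x,y)$ modulo $\Fitt_1(I)I$ with the matrix reducing to the identity of $\GL_2(\ovR)$, but $(x+x^2,y)$ generates a strictly smaller ideal.

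The missing idea is the paper's Lemma~\ref{LemSurjectivity}: keep the first generator $a$ and replace $b$ by $rb$, where $r$ is a lift of $\ov{u}$ chosen to be a unit modulo the \emph{smaller} ideal $(a:b)$, not merely modulo $\Fitt_1(I)=(a:b)+(b:a)$. Such a lift exists because $(a:b)$ contains the regular element $a$, so by hypothesis $R/(a:b)$ has stable rank $1$, hence property $(*)$, and units lift along the surjection $R/(a:b) \twoheadrightarrow R/\Fitt_1(I)$. Being a unit mod $(a:b)$ is precisely what makes $(a,rb)$ generate $I$, via the isomorphism $I/Ra \simeq R/(a:b)$ sending $b$ to $1$; one then checks $\det_{(a,b)}(a,rb)=\ov{u}$. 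So the stable rank hypothesis must be applied to $R/(a:b)$ rather than to $R/\Fitt_1(I)$: both quotients have stable rank $1$ under the hypothesis, but only the former resolves the generation problem. Your decomposition $\GL_2(\ovR)=\GL_1(\ovR)\E_2(\ovR)$ is correct and the elementary factors do lift harmlessly to $\E_2(R)\subseteq\SL_2(R)$, but this only reduces the problem to the diagonal case, which is where all the work lies.
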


In the remainder of this introduction, we shall illustrate 
consequences of Theorem \ref{ThSL2} in several Bass rings (see Section \ref{SecBassRings} for definition and background). 
In such rings, every ideal can be generated by two elements.

Our first application generalizes a theorem of Maass \cite[Proposition I.1.1]{vdGeer88} asserting 
that the number of cusps of the Hilbert modular group $\text{PSL}_2(\iO)$, with $\iO$ the ring of integers of a 
totally real algebraic number field, is equal to the class number of 
$\iO$. To introduce this result, further definitions are required.

The \emph{ring of mutlipliers} $\varrho(I)$ of a factional ideal $I$ of $R$ is the overring $\{ x \in K(R) \, \vert \, x I \subseteq I \}$.
The \emph{ideal class} $[I]$ of a fractional ideal $I$ of $R$ is 
the equivalence class of $I$ for the following relation on fractional ideals:  
$I \sim J$ if there are two regular elements $r, s \in R$ such that $rI = sJ$. 
We denote by $\Cl(R)$ the monoid of the ideal classes of $R$. 
We define the \emph{projective line} $\PL(K(R))$ over $K(R)$ as the quotient of $K(R) \times K(R) \setminus \{(0, 0)\}$ 
by the equivalence relation defined through: $\vb  \sim \wb$ if there are two regular elements $r, s \in R$ such that $r \vb = s \wb$.
There is a natural action of $\SL_2(R)$ on $\PL(K(R))$ from the right. 
The set of \emph{cusps of $R$} is the orbit set $\PL(K(R)) / \SL_2(R)$.

\begin{Acorollary}[Propositions \ref{PropP1} and \ref{PropEpsilonI}] \label{CorP1}
Let $R$ be an order in a number field. Then the number of cusps of $R$ is 
$$
\sum_{[I] \in \Cl(R), \, \mu(I) \le 2} [ (R/\Fitt_1(I))^{\times} : \epsilon(I) ].
$$
where $\epsilon(I)$ is the natural image of $\varrho(I)^{\times}$ in $(R/\Fitt_1(I))^{\times}$, see Definition \ref{DefEpsilonI}.
\end{Acorollary}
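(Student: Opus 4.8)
The plan is to fiber the set of cusps $\PL(K(R))/\SL_2(R)$ over the monoid $\Cl(R)$ of ideal classes and to evaluate each fiber by means of Corollary \ref{CorSL2}; these two steps correspond to Propositions \ref{PropP1} and \ref{PropEpsilonI}. Since $R$ is a domain with fraction field $K \Doteq K(R)$, a point of $\PL(K)$ is a line $[\vb] = [(x,y)]$ with $(x,y) \in K^2 \setminus \{(0,0)\}$, and to it I attach the fractional ideal $I_{\vb} \Doteq Rx + Ry$. First I would check that $[\vb] \mapsto [I_{\vb}] \in \Cl(R)$ is well defined on $\PL(K)$ and constant on $\SL_2(R)$-orbits: homothety replaces $I_{\vb}$ by an equivalent fractional ideal, while right multiplication by $g \in \SL_2(R)$ leaves $Rx + Ry$ unchanged because $g$ is invertible over $R$. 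This produces a map from cusps to $\Cl(R)$ whose image is exactly the set of classes $[I]$ with $\mu(I) \le 2$: every $I_{\vb}$ is two-generated, and conversely a two-generated representative $I = Ra + Rb$ is hit by $[(a,b)]$. For an order this condition is in fact automatic since every ideal is two-generated, but I would keep it for the sake of the statement.

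The core step is the computation of the fiber over a fixed class $[I]$, which I would carry out by normalizing representatives. Choosing an integral representative $I = Ra + Rb$ (all invariants below depend only on the module isomorphism class, hence only on $[I]$), I would argue that every point $[\vb]$ in the fiber has, after a suitable homothety, a representative $\vb \in \V_2(I)$ generating $I$ exactly, and that two such representatives $\vb, \wb \in \V_2(I)$ define the same point of $\PL(K)$ if and only if $\vb = u\wb$ with $u \in \varrho(I)^{\times}$, since $uI = I$ forces $u$ to stabilize $I$ invertibly. Because the left homothety action of $K^{\times}$ commutes with the right $\SL_2(R)$-action, the fiber over $[I]$ is identified with the double quotient
$$
\bigl(\V_2(I)/\SL_2(R)\bigr)/\varrho(I)^{\times}.
$$

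At this point I would invoke Corollary \ref{CorSL2}: the quotients of an order by its regular (that is, nonzero) ideals are finite, hence Artinian and of stable rank $1$, and $I$ is generated by two regular elements; thus the determinant map gives a bijection $\V_2(I)/\SL_2(R) \xrightarrow{\sim} \V_1(\det(I)) \simeq (R/\Fitt_1(I))^{\times}$. The remaining and most delicate point is the equivariance of this bijection under $\varrho(I)^{\times}$: for $u \in \varrho(I)^{\times}$, homothety by $u$ is an $R$-automorphism $m_u$ of $I$, and naturality of the determinant gives $\det(u\vb) = (\bigwedge^2 m_u)(\det \vb)$; as $\bigwedge^2 m_u$ is an $R$-automorphism of the cyclic module $\det(I) \simeq R/\Fitt_1(I)$, it is multiplication by a well-defined unit of $R/\Fitt_1(I)$. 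The resulting homomorphism $\varrho(I)^{\times} \to (R/\Fitt_1(I))^{\times}$ is the natural map of Definition \ref{DefEpsilonI}, with image $\epsilon(I)$. Hence the fiber over $[I]$ becomes $(R/\Fitt_1(I))^{\times}/\epsilon(I)$, of cardinality $[(R/\Fitt_1(I))^{\times} : \epsilon(I)]$, and summing over the classes with $\mu(I) \le 2$ yields the asserted formula. The main obstacle is precisely this last identification: confirming that the homothety action transports through the determinant to the natural map defining $\epsilon(I)$, and that this map lands in the unit group $(R/\Fitt_1(I))^{\times}$ as a genuine group homomorphism even when $u \notin R$.
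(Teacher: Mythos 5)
Your proposal is correct and follows essentially the same route as the paper: you fiber the cusp set $\PL(K(R))/\SL_2(R)$ over $\Cl(R)$ with fibers $\varrho(I)^{\times}\backslash\V_2(I)/\SL_2(R)$ (this is Proposition \ref{PropP1}), then identify each fiber with $(R/\Fitt_1(I))^{\times}/\epsilon(I)$ via the determinant map using stable rank $1$ of quotients of the order (this is Proposition \ref{PropEpsilonI} combined with Lemma \ref{LemSurjectivity}, exactly as the paper does). Your handling of the $\varrho(I)^{\times}$-equivariance through naturality of $\bigwedge^2 m_u$ acting on the cyclic module $\det(I)$ is just a clean repackaging of the paper's Lemma \ref{LemDetU} and Definition \ref{DefEpsilonI}, not a different argument.
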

The finiteness of $\Cl(R)$ for $R$ an order in a number field is a direct consequence of  \cite[Theorem 2.6.3]{BS66}. 
As a result, such an order has finitely many cusps, which is a special case of \cite[Theorem 5]{Shi63}.
We use Corollary \ref{CorP1} to effectively compute the number of cusps of a quadratic order. To state this result, we shall denote by $\Pic(R)$, 
the \emph{Picard group} of $R$, that is, the group of the invertible elements of $\Cl(R)$.
 
 \begin{Atheorem} \label{ThOrder}
Let $\iO$ be the ring of integers of a quadratic number field.
Let $f > 0$ be a rational integer and let  $\iO_f$ be the unique order of index $f$ in $\iO$.
Then the number of cusps of $\iO_f$ is $$\sum_{f' \vert f} \frac{\varphi(f/f')}{2^{\epsilon(f, f')}} \vert \Pic(\iO_{f'}) \vert$$ where $\varphi$ is the Euler totient function and 
$\epsilon(f, f') = 1$ if $ \iO_{f'}$ has a unit of norm $-1$ and $f / f' > 2$, else $\epsilon(f, f') = 0$.
\end{Atheorem}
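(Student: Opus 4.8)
The plan is to apply Corollary \ref{CorP1} to $R = \iO_f$ and to evaluate each term of the resulting sum. Recall that the orders of the quadratic field $K$ containing $\iO_f = \Z + f\iO$ are exactly the $\iO_{f'}$ with $f' \mid f$, and that $K(\iO_f) = K$. Since every ideal of a quadratic order is two-generated, the condition $\mu(I) \le 2$ in Corollary \ref{CorP1} is vacuous, so the number of cusps of $\iO_f$ equals $\sum_{[I] \in \Cl(\iO_f)} [(\iO_f/\Fitt_1(I))^{\times} : \epsilon(I)]$. First I would partition $\Cl(\iO_f)$ according to the ring of multipliers: as $\varrho(I)$ is an order of $K$ containing $\iO_f$, it equals $\iO_{f'}$ for a unique $f' \mid f$, and the fractional ideals with $\varrho(I) = \iO_{f'}$ are precisely the invertible (proper) $\iO_{f'}$-ideals. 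Writing $g = f/f'$ and using that $g\iO_{f'} \subseteq \iO_f$, one sees that the $\iO_f$-equivalence $rI = sJ$ and the $\iO_{f'}$-equivalence coincide on such ideals (clear denominators by $g \in \Z$). This identifies the classes $[I] \in \Cl(\iO_f)$ with $\varrho(I) = \iO_{f'}$ with the elements of $\Pic(\iO_{f'})$, of which there are $\lvert \Pic(\iO_{f'}) \rvert$.

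Next I would show that for every $I$ with $\varrho(I) = \iO_{f'}$ one has $\Fitt_1(I) = g\iO_{f'}$, the conductor of $\iO_f$ in $\iO_{f'}$, so that $\iO_f/\Fitt_1(I) \simeq \Z/g\Z$ as a ring and $(\iO_f/\Fitt_1(I))^{\times}$ has order $\varphi(g) = \varphi(f/f')$. Since $I$ is generated by two regular elements, Theorem \ref{ThSL2} gives $\Fitt_1(I) = II^{-1}$. The inclusion $II^{-1} \subseteq \iO_f$ is immediate, and $II^{-1}$ is stable under $\varrho(I) = \iO_{f'}$, hence is an $\iO_{f'}$-ideal contained in $\iO_f$; as $g\iO_{f'}$ is the largest such ideal, $II^{-1} \subseteq g\iO_{f'}$. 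For the reverse inclusion I would use that $I$ is invertible over $\iO_{f'}$: writing $(\iO_{f'}:I)$ for its inverse there, one has $g\iO_{f'}(\iO_{f'}:I) \subseteq I^{-1}$, whence $g\iO_{f'} = I \cdot \bigl(g\iO_{f'}(\iO_{f'}:I)\bigr) \subseteq II^{-1}$. A short computation in the $\Z$-basis $\{1, g\theta\}$ of $\iO_f$, where $\iO_{f'} = \Z[\theta]$, then identifies $\iO_f/g\iO_{f'}$ with $\Z/g\Z$ as a ring.

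Finally I would compute $\epsilon(I)$. By Definition \ref{DefEpsilonI}, $\epsilon(I)$ is the image of $\varrho(I)^{\times} = \iO_{f'}^{\times}$ in $(\iO_f/\Fitt_1(I))^{\times}$ obtained by letting $u$ act on $\det(I)$ via multiplication on $I$. Since multiplication by $u$ is $\iO_f$-linear on $I$ and its determinant as a $\Z$-linear endomorphism of the lattice $I$ equals $\nK(u)$, it acts on $\det(I) \simeq \iO_f/\Fitt_1(I)$ as multiplication by $\nK(u)$, a fact one checks by a local computation at the primes above the conductor (where $I$ is free over $\iO_{f'}$ with $\iO_f$-basis $\{1,\theta\}$). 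Hence $\epsilon(I) = \{\nK(u) \bmod g : u \in \iO_{f'}^{\times}\} \subseteq (\Z/g\Z)^{\times}$. As $\nK(u) = \pm 1$, this subgroup is trivial when $\iO_{f'}$ has no unit of norm $-1$, and equals $\{\pm 1 \bmod g\}$ otherwise; the latter has order $2$ exactly when $-1 \not\equiv 1 \pmod g$, i.e. when $g > 2$. Thus $\lvert \epsilon(I) \rvert = 2^{\epsilon(f,f')}$ depends only on $f'$, the index $[(\iO_f/\Fitt_1(I))^{\times} : \epsilon(I)]$ equals $\varphi(f/f')/2^{\epsilon(f,f')}$, and summing over $f' \mid f$ and over the $\lvert \Pic(\iO_{f'}) \rvert$ classes with multiplier ring $\iO_{f'}$ yields the stated formula. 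The main obstacle I anticipate is the uniform identification $\Fitt_1(I) = g\iO_{f'}$ across a full ideal class, together with the precise determination of $\epsilon(I)$ through the norm, and in particular the case distinction $g > 2$ versus $g \le 2$ that produces the factor $2^{\epsilon(f,f')}$.
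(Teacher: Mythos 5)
Your proposal is correct, and it shares the paper's overall skeleton: apply Corollary \ref{CorP1}, stratify $\Cl(\iO_f)$ by the multiplier ring $\varrho(I)=\iO_{f'}$, identify each stratum with $\Pic(\iO_{f'})$, show that $\Fitt_1(I)$ is the conductor $(\iO_f:\iO_{f'})=g\iO_{f'}$ (writing $g=f/f'$) so that $(\iO_f/\Fitt_1(I))^{\times}\simeq(\Z/g\Z)^{\times}$, and compute $\epsilon(I)$ as the image of the norm on $\iO_{f'}^{\times}$, giving the index $\varphi(g)/2^{\epsilon(f,f')}$. Where you genuinely depart from the paper is the proof of the central identity $\Fitt_1(I)=(\iO_f:\varrho(I))$. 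The paper gets it computationally: it introduces the standard basis $(a,d+ef\omega)$ of $I$ (Lemma \ref{LemStandardBasis}), relates $\gcd(a,d,ef)$ to the content of the associated quadratic form and to $f/f'$ (Lemma \ref{LemContent}), proves the explicit formula $I^{-1}=\frac{1}{\Norm(I)}\sigma(I)$ and hence $II^{-1}=\Z\frac{f}{f'}+\Z f\omega$ (Lemma \ref{LemmaIMinus}), and finally matches indices (Proposition \ref{PropFitt1OfOrderIdeal}). You instead sandwich $II^{-1}$ abstractly: it is an $\iO_{f'}$-module contained in $\iO_f$, hence contained in the conductor, while invertibility of $I$ over $\varrho(I)$ (\cite[Proposition 5.8]{LW85}) gives $g\iO_{f'}=I\bigl(g(\iO_{f'}:I)\bigr)\subseteq II^{-1}$. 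This is shorter, coordinate-free, and is precisely the kind of argument that could bear on the question raised in the paper's introduction about $\Fitt_1(I)=(R:\varrho(I))$ in general Bass domains; what it forgoes is the explicit standard-basis description of $\Fitt_1(I)$ and $I^{-1}$, which the paper reuses elsewhere (e.g.\ in Proposition \ref{PropGL2}). Two smaller remarks. First, your identification of the stratum $\{[I]:\varrho(I)=\iO_{f'}\}$ with $\Pic(\iO_{f'})$ by clearing denominators with $g$ supplies a justification the paper leaves implicit. Second, your claim that $u\in\iO_{f'}^{\times}$ acts on $\det(I)$ by $\nK(u)$ is the paper's Lemma \ref{LemNormU}; the cleanest justification is not a localization argument but simply to evaluate $\det_{\mb}(u\mb)$ at a generating pair $\mb$ that is simultaneously a $\Z$-basis of $I$ (the standard basis), where multiplication by $u$ is an integer matrix of determinant $\nK(u)$ and Lemma \ref{LemDetPi} applies, the choice of $\mb$ being immaterial by Lemma \ref{LemDetU}. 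Your local computation can be made rigorous via Lemma \ref{LemDet}.$iii$, observing that at maximal ideals not containing the conductor the quotient ring vanishes, but it is the more roundabout route.
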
 

By \cite[Theorem 12.12]{Neu99}, we have 
$$
\vert \Pic(\iO_{f'}) \vert = 
\frac{\vert \Pic(\iO) \vert}{\vert \iO^{\times} / \iO_{f'}^{\times}\vert} 
\frac{\vert ( \iO/\iO f')^{\times}  \vert}{\vert( \iO_{f'}/\iO f')^{\times} \vert}
$$ 
and moreover $\vert \Pic(\iO) \vert$ divides $\vert \Pic(\iO_{f'}) \vert$. 
Thus $\vert \Pic(\iO) \vert$, which is the number of cusps of $\iO$, divides the number of cusps of $\iO_f$. 
Note that the computation of $\epsilon(f, f')$ is a well-studied and solved problem, see e.g., \cite[Theorem 2]{Fur59}, \cite{Jen62} and the seminal work of Rédei \cite{Red53}.

Theorem \ref{ThOrder} relies on the computation of $\Fitt_1(I)$ for an arbitrary ideal $I$ of $R = \iO_f$.
This computation allows us moreover to describe 
$$\V_2(I)/\SL_2(R), \Aut_R(I) \backslash \V_2(I)/\SL_2(R) \text{ and } \V_2(I)/\GL_2(R)$$
and to show that $\SL_n(R)$ acts transitively on $\V_n(I)$ for every $n > 2$ in Section \ref{SecOrders} below. 
The results of Section \ref{SecOrders} enable us to solve in \cite{Guy20c} a problem in combinatorial group theory, 
namely the description of the Nielsen equivalence classes of generating vectors in semi-direct products of the form $\Z^2 \rtimes \Z$.

We present now an analysis similar to the one of Section \ref{SecOrders} for another Bass domain: the coordinate ring $R$ of 
the curve defined by the equation $y^2 = x^n$ over an arbitrary field $K$ where $n \ge 3$ is an odd integer. 
It is well-known and easy to show that $R$ is isomorphic to $K[x^2, x^n]$. 
For this example, the computation of $(R / \Fitt_1(I))^{\times}$ yields also an explicit description of $\V_2(I) / \SL_2(R)$.
The following definition is required to state this last result. For $S \subseteq K(R)$, a ring containing $R$, 
the \emph{conductor $(R:S)$ of $R$ in $S$} is the $R$-ideal $\{r \in R \, \vert \, rS \subseteq R \}$.  

\begin{Aproposition}[Lemma \ref{LemRhoI} and Proposition \ref{PropSL}] \label{PropCurves}
Let $K$ be a field and let $R = K[x^2, x^n]$ where $n \ge 3$ is an odd rational integer.
Let $I$ be an ideal of $R$ and let $f$ be the smallest positive odd rational integer such that $x^f I \subseteq I$. 
Then the following hold.
\begin{itemize}
\item[$(i)$] $\varrho(I) = K[x^2] + K[x] x^{f - 1}$. 
\item[$(ii)$] $ \Fitt_1(I) = (R: \varrho(I))  =  K[x^2]  x^{n - f} + K[x] x^{n - 1}$.
\item[$(iii)$] The ring $R / \Fitt_1(I)$ is isomorphic to $K[x^2] / K[x^2] x^{n -f}$.
\item[$(iv)$] The group $\SL_k(R)$ acts transitively on $\V_k(I)$ for every $k > 2$.
\end{itemize}
\end{Aproposition}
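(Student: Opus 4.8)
The plan is to regard $R=K[x^2,x^n]$ as the $K$-span of the monomials $x^s$ with $s$ in the numerical semigroup $S=\langle 2,n\rangle=\{0,2,4,\dots\}\cup\{n,n+2,\dots\}$, so that $K(R)=K(x)$ and the integral closure of $R$ is $\ov{R}=K[x]$. For $(i)$, I first note that $\varrho(I)$ is a ring with $R\subseteq\varrho(I)\subseteq K[x]$: the right-hand inclusion holds because $I$ is a nonzero finitely generated fractional ideal, so $\varrho(I)$ embeds in $\operatorname{End}_R(I)$, is module-finite over $R$, hence integral, hence contained in $\ov R=K[x]$. I then determine $\varrho(I)$ from its even and odd parts in the decomposition $K[x]=K[x^2]\oplus xK[x^2]$. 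Its even part is $K[x^2]$ (it contains $K[x^2]\subseteq R$ and consists of even powers). Its odd part is a $K[x^2]$-submodule of the free rank-one module $xK[x^2]$; dividing by $x$ turns it into an ideal of the principal ideal domain $K[x^2]$ which, because $R\subseteq\varrho(I)$ forces it to contain $x^{n}K[x^2]$, contains a power of $x^2$ and is therefore generated by a monomial. Hence $\varrho(I)_{\mathrm{odd}}=x^gK[x^2]$ and $\varrho(I)=K[x^2]+x^gK[x^2]=K[x^2]+K[x]x^{g-1}$, where $g$ is the least odd exponent with $x^g\in\varrho(I)$. Since $x^gI\subseteq I$ exactly when $x^g\in\varrho(I)$, this $g$ is the integer $f$ of the statement (which exists and satisfies $f\le n$ because $x^n\in R$), proving $(i)$.

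For $(ii)$, since $R$ is a domain any nonzero ideal is generated by two regular elements (indeed $R$ is a Bass ring, in which every ideal is two-generated), so the identity $\Fitt_1(I)=II^{-1}$ that underlies Theorem~\ref{ThSL2} holds here. The heart of the matter is then the ring-theoretic identity $II^{-1}=(R:\varrho(I))$. The inclusion $\subseteq$ is formal: if $z=\sum_i a_ib_i$ with $a_i\in I$ and $b_i\in I^{-1}$, then for $w\in\varrho(I)=(I:I)$ each $b_iw$ lies in $I^{-1}$ (because $b_iwI\subseteq b_iI\subseteq R$), so $zw\in II^{-1}\subseteq R$ and $z\in(R:\varrho(I))$. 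For $\supseteq$ I use that $R$ is a Bass ring, so $I$ is invertible over its multiplier ring, i.e. $I(\varrho(I):I)=\varrho(I)$; then for $c\in(R:\varrho(I))$ one has $c(\varrho(I):I)\subseteq I^{-1}$, whence $c\in c\varrho(I)=I\cdot\big(c(\varrho(I):I)\big)\subseteq II^{-1}$. Finally, inserting $\varrho(I)=K[x^2,x^f]$ from $(i)$, the conductor $(R:\varrho(I))=\{r\in R:r\varrho(I)\subseteq R\}$ is computed by routine bookkeeping of exponents: a monomial $x^m\in R$ lies in it precisely when $m+j\in S$ for every exponent $j$ occurring in $\varrho(I)$ (all even $j\ge 0$ and all odd $j\ge f$), which happens exactly for even $m\ge n-f$ and for odd $m\ge n$; these monomials span $K[x^2]x^{n-f}+K[x]x^{n-1}$.

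For $(iii)$, I read the quotient directly off $(ii)$. The composite $K[x^2]\hookrightarrow R\twoheadrightarrow R/\Fitt_1(I)$ is a ring homomorphism; its kernel is $K[x^2]\cap\Fitt_1(I)=x^{n-f}K[x^2]$, and it is surjective because every element of $R$ is congruent modulo $\Fitt_1(I)$ to its even part, the odd monomials of $R$ (those $x^m$ with $m\ge n$) all lying in $K[x]x^{n-1}\subseteq\Fitt_1(I)$. Hence $R/\Fitt_1(I)\cong K[x^2]/x^{n-f}K[x^2]$.

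For $(iv)$, I invoke the general transitivity result: $R$ is a one-dimensional Noetherian domain, so $\sr(R)\le 2$, and $\mu(I)\le 2$ because $R$ is a Bass ring; Proposition~\ref{PropSL}, through the elementary transitivity of Proposition~\ref{PropENWithNGe3}, then yields that $\E_k(R)\subseteq\SL_k(R)$ acts transitively on $\V_k(I)$ for every $k\ge 3$. The step I expect to be the main obstacle is the identity $II^{-1}=(R:\varrho(I))$ in $(ii)$, and specifically its $\supseteq$ inclusion: it rests on the invertibility of $I$ over its multiplier ring, a Bass-ring phenomenon that must be justified (here each intermediate order $K[x^2,x^g]$ is the Gorenstein semigroup ring of the symmetric semigroup $\langle 2,g\rangle$); the ensuing conductor computation is only bookkeeping but is where the exponents $n-f$ and $n-1$ must be tracked with care.
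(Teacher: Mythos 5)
Your treatment of $(i)$--$(iii)$ is essentially correct but follows a genuinely different route from the paper's. The paper constructs a ``standard basis'' $(p(x),q(x))$ of $I$ (Lemma \ref{LemStandardBasisCurves}), computes $\Fitt_1(I)$ from the relations of that basis by an explicit matrix analysis (Lemma \ref{LemFittNu}), and only afterwards identifies $\varrho(I)$ and the integer $\nu=f-1$ by a second matrix computation (Lemma \ref{LemRhoI}); the conductor identity then follows by comparing the two explicit formulas. You instead compute $\varrho(I)$ directly from the parity decomposition $K[x]=K[x^2]\oplus xK[x^2]$ (legitimate here because every even polynomial already lies in $K[x^2]\subseteq R\subseteq\varrho(I)$, so any intermediate ring splits into even and odd parts), and you get $\Fitt_1(I)=(R:\varrho(I))$ from the paper's Lemma \ref{LemFitt1} ($\Fitt_1(I)=II^{-1}$) together with the ideal-theoretic identity $II^{-1}=(R:\varrho(I))$, whose nontrivial inclusion rests on the invertibility of $I$ as an ideal of its multiplier ring. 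That invertibility is nowhere proved in the paper and is not a formality, but it is a genuine citable theorem (Bass's ``ubiquity'' characterization: in a one-dimensional Noetherian domain with finite normalization whose ideals are all two-generated, every nonzero ideal is projective, hence invertible, over its endomorphism ring; your Gorenstein-semigroup remark is the correct reason it applies to $K[x^2,x^f]$). Once that citation is supplied, your argument for $(i)$--$(iii)$ is complete, avoids all of the paper's matrix computations, and --- notably --- establishes $\Fitt_1(I)=(R:\varrho(I))$ for faithful ideals of an arbitrary Bass domain, which the paper's closing remarks single out as something its method could not reach.

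Part $(iv)$, however, has a genuine gap. You ``invoke'' Proposition \ref{PropSL}, which \emph{is} assertion $(iv)$ --- that is circular --- and Proposition \ref{PropENWithNGe3}, which concerns orders in quadratic number fields, not $K[x^2,x^n]$. The non-circular ingredients you list ($\sr(R)\le 2$ and $\mu(I)\le 2$) do not suffice: stable-range arguments give transitivity of $\E_k(R)$ on unimodular rows $\V_k(R)$, not on $\V_k(I)$, and even after shortening a generating vector of $I$ to the form $(a',b',0,\dots,0)$ one must still connect distinct shortened vectors, which is where the paper needs Lemma \ref{LemSurjectivity} --- requiring that $R/(a:b)$ have stable rank $1$, i.e., that $R$ be almost of stable rank $1$, not merely $\sr(R)\le 2$. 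The correct mechanism is Corollary \ref{CorFreeModule} applied with $S=K[x^2]$: the ring $R=K[x^2]\oplus K[x^2]x^n$ is free of rank $2$ over the principal ideal domain (hence K-Hermite) $K[x^2]$, it is a one-dimensional domain and therefore almost of stable rank $1$, and $\mu(I)\le 2$ by the Bass property; the shortening is then performed by Lemma \ref{FreeOverKHermite} and the two-generated cores are connected through Lemma \ref{LemSurjectivity}. Your proposal never mentions the freeness of $R$ over $K[x^2]$, which is the decisive structural input for this step.
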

Thanks to Corollary \ref{CorSL2} and Proposition \ref{PropCurves}.$iii$, we can identify $\V_2(I) / \SL_2(R)$ with the group  $(K[x^2] / K[x^2] x^{n - f})^{\times}$ 
which in turn is isomorphic to $K_{+}^{ \frac{n - f}{2} - 1} \times K^{\times}$ where $K_{+}$ is the additive group of $K$.

\paragraph{Final remarks}
Within the course of the proofs of Theorem \ref{ThOrder} and Proposition \ref{PropCurves}, we established a common fact that we couldn't generalize to arbitrary Bass domains:
$\Fitt_1(I) = (R : \varrho(I))$ for $I$ a faithful ideal of $R$. In a Bass ring, this implies that $\Fitt_1(I)$ 
lies in a finite set of conductors as $R$ has only finitely many overrings contained in its normalization \cite[Proposition 2.2.$ii$]{LW85}. 
We also noticed for each Bass ring example in this article the following properties:
\begin{itemize}
\item $\Fitt_1(\Fitt_1(I)) = \Fitt_1(I)$,
\item $\varrho(\Fitt_1(I)) = \rho(I)$.
\end{itemize}
\begin{question}
To which extent can these observations be generalized?
\end{question}

\paragraph{Layout} This paper is organized as follows. Section \ref{SecNotation} introduces notation and background results. 
It encloses some of the fundamental properties of the determinant map and of the Bass stable rank. 
Section \ref{SecThSL2} presents a local-global criterion to study the action of $\SL_2(R)$ and then the proof of Theorem \ref{ThSL2}.
Section \ref{SecConditions} presents several conditions under which the determinant map induces a bijection in Theorem \ref{ThSL2}.
It includes in particular Corollary \ref{CorSL2} and its proof.
Section \ref{SecP1} is dedicated to the proof of Corollary \ref{CorP1}.
Sections \ref{SecOrders} and \ref{SecCurves} address in details the orders of a quadratic number field and the affine domain $K[x^2, x^n]$.
These two sections contain respectively the proof of Theorem \ref{ThOrder} and of Proposition \ref{PropCurves}.

\paragraph{Acknowledgments}
We are grateful to Justin Chen, François Couchot and Jean Fasel for their encouragements. 
We are specially thankful to Henri Lombardi and Wilberd van der Kallen 
for helpful comments and references.

\section{Notation and background} \label{SecNotation}

We assume throughout that $R$ is a commutative unital ring and $M$ is a finitely generated $R$-module. 
If $\ip$ is a prime ideal of $R$, we denote by $\Rp$ the localization of $R$ at $\ip$. 
Similarly, we denote by $\Mp$ its localization at $\ip$, which we identify with $M \otimes_R \Rp$. Given an element $m$ in $M$, we abuse notation in denoting also by $m$ its image $m\otimes_R 1 \in \Mp$. If $M$ and $N$ are two submodules of a given $R$-module, we denote by $(M:N)$ the ideal consisting of the elements $r \in R$ such that $rN \subseteq M$. If $Rm$ and $Rm'$ are two cyclic submodules of $M$, we simply write $(m : m')$ for $(Rm : Rm')$. 

\paragraph{Matrices over $R$ and $M$}
 Given two $1 \times n$ row vectors $\mb = (m_1,\dots, m_n) \in M^n$ and $\rb = (r_1, \dots, r_n) \in R^n$, we denote by $\mb^{\top}$ and $\rb^{\top}$ the $n \times 1$ column vectors obtained by transposition and we define the dot products $\mb \rb^{\top} = \rb \mb^{\top} \Doteq \sum_i r_i m_i$. Based on these identities, the product of any two matrices over $M$ and $R$, with compatible numbers of rows and columns, is uniquely defined. 
We denote respectively by $0_{m \times n}$ and by ${1_n}$ the $m \times n$ zero matrix and the $n \times n$ identity matrix over $R$.

\subsection{Fitting ideals and the determinant module} \label{SubSecFittAndDet} 
The determinant map is the invariant on which our study of $\V_2(M) / \SL_2(R)$ hinges.
This map can be effectively computed by means of the second Fitting ideal.

\paragraph{Fitting ideals}
Let 
$$
F \xrightarrow[]{\varphi} G \rightarrow M \rightarrow 0 
$$
be an exact sequence such that $F$ and $G$ are free modules over $R$ and $G$ is finitely generated. Let $I_i(\varphi)$ the ideal of 
$R$ generated by the $i \times i$ minors of the matrix of $\varphi: F \rightarrow G$, agreeing that $I_0(\varphi) = R$.  
Then the \emph{$(i + 1)$-th Fitting ideal of $M$} is defined as $\Fitt_i(M) \Doteq I_{\mu(G) - i}(\varphi)$. 
The Fitting ideals are independent of $\varphi$ by Fitting's lemma \cite[Corollary 20.4]{Eis95}. 
They commute with \emph{base change}, i.e, $\Fitt_i(M \otimes_R S) = \Fitt_i(M) \otimes_R S$ for any $R$-algebra $S$ \cite[Corollary 20.5]{Eis95}, and hence with localization.

\paragraph{The determinant of a finitely generated module} 
Let $\mu = \mu(M)$.
Recall that the determinant $\det(M)$ of $M$ is the exterior product $\bigwedge^{\mu}M$. This is a cyclic $R$-module whose annihilator is $\Fitt_{\mu - 1}(M)$ \cite[Exercise 20.9.i]{Eis95}.
Given $\mb = (m_1, \dots, m_{\mu}) \in \V_{\mu}(M)$, we denote by $\phi_{\mb}: \det(M) \rightarrow R/\Fitt_{\mu - 1}(M)$ the $R$-isomorphism induced by the map $m_1 \wedge  \cdots  \wedge m_{\mu} \rightarrow 1 + \Fitt_{\mu - 1}(M)$. Given another generating pair $\mb' = (m_1', \dots, m_{\mu}') \in M^{\mu}$, we define 
$$\text{det}_{\mb}(\mb') \Doteq \phi_{\mb}(m_1' \wedge \dots \wedge m_{\mu}').$$
It is easily checked that the image of $\V_{\mu}(M)$ under $\det_{\mb}$ is a subgroup of $(R/\Fitt_{\mu - 1}(M))^{\times}$ which does not depend on the choice of $\mb$.

The following lemma is straightforward.

\begin{lemma} \label{LemDet}
Let  $\mb, \mb', \mb'' \in \V_{\mu}(M)$.
Then the following assertions hold.
\begin{itemize}
\item[$(i)$] $\det_{\mb}(\mb' A) = \det(A)\det_{\mb}(\mb')$, for every $\mu \times \mu$ matrix $A$ over $R$.
\item[$(ii)$] $\det_{\mb}(\mb'') = \det_{\mb}(\mb') \det_{\mb'}(\mb'')$.
\item[$(iii)$] $\det_{\mb}(\mb') = 1 + \Fitt_{\mu - 1}(M)$, if and only if, $\det_{\mb}(\mb') = 1 + \Fitt_{\mu - 1}(M_{\ip})$ 
for every maximal ideal $\ip$ of $R$, where $\mb$ and $\mb'$ denote (abusively) their natural images in $\V_{\mu - 1}(M_{\ip})$ in the left-hand side of the identity.
\end{itemize}
\end{lemma}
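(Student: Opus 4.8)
The plan is to dispatch the three assertions in turn, the first two being formal manipulations of the exterior power and the third an instance of the local-global principle for vanishing in a commutative ring. Throughout I write $\omega_{\mb} = m_1 \wedge \cdots \wedge m_\mu$ for $\mb = (m_1,\dots,m_\mu)$, and likewise $\omega_{\mb'}$, $\omega_{\mb''}$, so that $\det_{\mb}(\mb') = \phi_{\mb}(\omega_{\mb'})$ by definition. For $(i)$, writing $A = (a_{ij})$ the $j$-th component of $\mb' A$ is $\sum_i a_{ij} m_i'$, and the alternating multilinearity of $\bigwedge^\mu$ over $R$ gives the classical identity $(\mb'A)_1 \wedge \cdots \wedge (\mb'A)_\mu = \det(A)\,\omega_{\mb'}$ in $\det(M)$. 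Applying the $R$-linear map $\phi_{\mb}$ and pulling the scalar $\det(A) \in R$ out yields $\det_{\mb}(\mb'A) = \det(A)\det_{\mb}(\mb')$.

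For $(ii)$ I would use that $\det(M) = \bigwedge^\mu M$ is cyclic and that for any $\mb \in \V_\mu(M)$ the element $\omega_{\mb}$ generates it, being the only top wedge that can be formed from $\mu$ generators. Both $\phi_{\mb}$ and $\phi_{\mb'}$ are $R$-linear maps $\det(M) \to R/\Fitt_{\mu-1}(M)$; by definition $\phi_{\mb'}(\omega_{\mb'}) = 1 + \Fitt_{\mu-1}(M)$, whereas $\phi_{\mb}(\omega_{\mb'}) = \det_{\mb}(\mb')$. Since these two $R$-linear maps agree, up to the scalar $\det_{\mb}(\mb')$, on the generator $\omega_{\mb'}$, they satisfy $\phi_{\mb} = \det_{\mb}(\mb')\,\phi_{\mb'}$. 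Evaluating this identity of maps at $\omega_{\mb''}$ gives $\det_{\mb}(\mb'') = \det_{\mb}(\mb')\det_{\mb'}(\mb'')$.

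For $(iii)$ I would set $A = R/\Fitt_{\mu-1}(M)$ and observe that $\det_{\mb}(\mb') = 1 + \Fitt_{\mu-1}(M)$ holds in $A$ if and only if the element $\det_{\mb}(\mb') - 1 \in A$ vanishes, and an element of a commutative ring vanishes precisely when its image vanishes in every localization at a maximal ideal. The crux is then the compatibility of $\det_{\mb}$ with localization: since the exterior power commutes with the base change $R \to R_{\ip}$ and, by \cite[Corollary 20.5]{Eis95}, so does the Fitting ideal $\Fitt_{\mu-1}$, the isomorphism $\phi_{\mb}$ localizes to the corresponding isomorphism attached to the images of $\mb$, $\mb'$ in $M_\ip^\mu$; hence the image of $\det_{\mb}(\mb')$ in $A_{\ip} = R_\ip/\Fitt_{\mu-1}(M_\ip)$ is exactly the locally computed $\det_{\mb}(\mb')$. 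At a maximal ideal $\ip$ with $\mu(M_\ip) < \mu$ one has $\Fitt_{\mu-1}(M_\ip) = R_\ip$, so $A_\ip = 0$ and the local identity holds vacuously; at the remaining primes it is the genuine local condition of the statement. Combining the local-global vanishing criterion with this compatibility gives the equivalence. I expect the only real subtlety to lie here in $(iii)$, namely in verifying carefully that forming $\det_{\mb}(\mb')$ commutes with localization — that $\omega_{\mb}$ stays a generator of the cyclic module after localizing and that the target ring collapses to zero exactly where the number of generators drops — after which the argument reduces to the standard principle that vanishing in a ring may be tested locally.
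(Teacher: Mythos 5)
Your proof is correct and is exactly the routine verification the paper has in mind: it states this lemma as ``straightforward'' and omits the proof, and your three steps --- alternating multilinearity of $\bigwedge^{\mu}$ for $(i)$, cyclicity of $\det(M)$ with generator $\omega_{\mb'}$ for $(ii)$, and the local-global vanishing criterion combined with base-change compatibility of exterior powers and Fitting ideals for $(iii)$ --- are the intended argument. In particular, your handling of the degenerate primes where $\mu(M_{\ip}) < \mu$, so that $\Fitt_{\mu-1}(M_{\ip}) = R_{\ip}$ and the local condition is vacuous, correctly fills in the one point the paper glosses over.
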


$\square$

When there is no risk of ambiguity, we simply denote by $1$ the identity element of $R/\Fitt_{\mu -1}(M)$.
The ideal $\Fitt_{\mu - 1}(M)$ has a convenient description which makes the computation of $\det_{\mb}$ 
effective when a workable presentation of $M$ is given. Given $\mb \in \V_{\mu}(M)$, we say that an element $r$ is \emph{involved in a relation of $\mb$} 
if there is $(r_1,\dots, r_{\mu}) \in R^{\mu}$ such that $ \sum_{i = 1}^{\mu} r_i m_i = 0$ and $r = r_i$ for some $i$. 

\begin{lemma} \label{LemFittMuMinusOne}
Let $\mb \in\V_{\mu}(M)$. Then $\Fitt_{\mu - 1}(M)$  is the set of the elements of $R$ involved in a relation of $\mb$. 
\end{lemma}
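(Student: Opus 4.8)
The plan is to compute $\Fitt_{\mu-1}(M)$ directly from the free presentation that $\mb$ itself supplies, so that the second Fitting ideal becomes visibly an ideal of $1\times 1$ minors of a relation matrix. Put $\mu=\mu(M)$ and let $\pi\colon R^{\mu}\to M$ be the surjection with $\pi(e_i)=m_i$ for the standard basis $e_1,\dots,e_\mu$. Its kernel $K=\Ker\pi$ is exactly the relation module of $\mb$, namely $K=\{(r_1,\dots,r_\mu)\in R^\mu : \sum_i r_i m_i=0\}$. Choosing any free module $F$ together with a homomorphism $\varphi\colon F\to R^\mu$ with $\Imm\varphi=K$ yields a presentation $F\xrightarrow{\varphi}R^\mu\to M\to 0$ in which $G=R^\mu$ and $\mu(G)=\mu$. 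Since the Fitting ideals do not depend on the chosen presentation (Fitting's lemma), I may read $\Fitt_{\mu-1}(M)$ off this one: by definition it equals $I_{\mu-(\mu-1)}(\varphi)=I_1(\varphi)$, the ideal generated by the entries of a matrix representing $\varphi$.

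Next I would match these entries with the components of relations. The columns of a matrix of $\varphi$ generate $K$, hence each is a relation of $\mb$ and its entries are among that relation's components; so every generator of $I_1(\varphi)$ is an element of $R$ involved in a relation of $\mb$. Conversely, if $r$ is a component of some relation $(r_1,\dots,r_\mu)\in K$, then expanding that relation as an $R$-linear combination of the columns of $\varphi$ writes $r$ as an $R$-linear combination of the entries of $\varphi$, whence $r\in I_1(\varphi)$. Combining the two directions gives $\Fitt_{\mu-1}(M)=I_1(\varphi)$, and this ideal is generated by the elements of $R$ involved in a relation of $\mb$.

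The step to watch — and what I expect to be the main obstacle — is the passage from this ideal identity to the literal \emph{set} statement. Writing $\pi_i\colon R^\mu\to R$ for the $i$-th coordinate projection, the elements involved in a relation form $S=\bigcup_i \pi_i(K)$, whereas the argument above identifies $\Fitt_{\mu-1}(M)=I_1(\varphi)$ with the ideal $\sum_i \pi_i(K)$ that $S$ generates. Each $\pi_i(K)$ is already an ideal, so the whole issue is closure under adding components drawn from \emph{different} coordinates, i.e. whether $S=\sum_i\pi_i(K)$; this is the one non-formal point, everything else being bookkeeping settled once and for all by presentation-independence. I would approach the closure by folding two coordinates into one through a change of generating vector $\mb\mapsto\mb g$ with $g\in\GL_\mu(R)$, which carries relations to relations via $g^{-1}$ and preserves minimality, and I would use Lemma~\ref{LemDet} to track determinants along the way; one should, however, keep in mind that $S$ depends on $\mb$ while $\Fitt_{\mu-1}(M)$ does not, so the robust content to establish is precisely that the elements involved in a relation of $\mb$ generate $\Fitt_{\mu-1}(M)$.
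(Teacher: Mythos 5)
Your first two paragraphs are correct and give what is surely the intended argument --- the paper states Lemma \ref{LemFittMuMinusOne} without proof, deeming it straightforward. Writing $K=\Ker\bigl(\pi\colon R^{\mu}\to M\bigr)$ for the relation module of $\mb$ and $\pi_i$ for the coordinate projections, presentation-independence gives exactly $\Fitt_{\mu-1}(M)=I_1(\varphi)=\sum_i\pi_i(K)$, the ideal generated by all components of all relations of $\mb$.

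But the step you single out at the end --- upgrading this to the literal set equality $\Fitt_{\mu-1}(M)=\bigcup_i\pi_i(K)$ --- is not merely ``non-formal'': it is false, so the $\GL_{\mu}(R)$ repair you sketch cannot succeed (and it would in any case be off target, since the claim concerns the fixed $\mb$, not a better-chosen generating vector). A counterexample lies inside the paper's own domain of application: take a Dedekind domain $R$ and a non-principal ideal $I=Ra+Rb$, say $R=\Z[\sqrt{-5}]$, $a=2$, $b=1+\sqrt{-5}$, $\mb=(a,b)$. Then $I$ is invertible, so $\Fitt_1(I)=II^{-1}=R$ contains $1$. Yet any relation $r_1a+r_2b=0$ has $r_1\in(b:a)$ and $r_2\in(a:b)$, and both of these ideals are proper, because $(b:a)=R$ would force $a\in Rb$, i.e.\ $I=Rb$ principal; hence every element involved in a relation of $\mb$ lies in a union of two proper ideals, and $1$ is never such an element. (More prosaically: for $M=\Z/4\Z\oplus\Z/6\Z$ over $\Z$ with the evident generating pair, the components of relations form $4\Z\cup 6\Z$, while $\Fitt_1(M)=2\Z$.) The correct statement is the one you actually proved: $\Fitt_{\mu-1}(M)$ is the ideal \emph{generated by} the elements involved in relations of $\mb$. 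That is also the only form in which the paper uses the lemma: the proof of Lemma \ref{LemFitt1} invokes it as ``$\Fitt_1(I)$ is generated by the components of the row vectors $\cb$ such that $\mb\cb^{\top}=0$'' and then writes $\Fitt_1(I)=(a:b)+(b:a)$ --- a sum, not a union --- and Lemma \ref{LemFittNu} does likewise. So your argument is complete for the lemma as it is actually used; the residual ``gap'' is a defect of the lemma's phrasing, which your closing remark correctly isolates.
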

$\square$
 
Given  $\mb \in\V_{\mu}(M)$, let $\overline{\mb} = \pi(\mb)$ be the image of $\mb \in V_{\mu}(M)$ by the natural map $\pi : M \rightarrow M/\Fitt_{\mu - 1}(M)M$ 
 and let $\eb$ be the canonical basis of $(R/\Fitt_{\mu - 1}(M))^{\mu}$. Then the map $\overline{\mb} \mapsto \eb$ induces an isomorphism $\varphi_{\mb}$ from $M/\Fitt_{\mu - 1}(M)M$ onto $(R/\Fitt_{\mu - 1}(M))^{\mu}$.
The following lemma shows how the map $\pi$ can be used to compute $\det_{\mb}$.
\begin{lemma} \label{LemDetPi}
Let $\mb, \mb' \in \V_{\mu}(M)$. Then $\det_{\mb}(\mb')$ is the determinant of the unique $\mu \times \mu$ matrix $\overline{A}$ over $R/\Fitt_{\mu - 1}(M)$ 
satisfying $\pi(\mb') = \pi(\mb) \overline{A}$, that is the matrix of $\varphi_{\mb} \circ \pi(\mb')$ with respect to $\eb$.
\end{lemma}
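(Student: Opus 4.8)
The plan is to reduce everything to the free situation by base change to $\overline{R} = R/\Fitt_{\mu - 1}(M)$. First I would settle the existence and uniqueness of $\overline{A}$. As recalled just before the statement, the assignment $\overline{\mb} \mapsto \eb$ extends to an isomorphism $\varphi_{\mb}$ identifying $\overline{M} = M/\Fitt_{\mu - 1}(M)M$ with the free module $(R/\Fitt_{\mu - 1}(M))^{\mu}$, so $\pi(\mb) = (\pi(m_1), \dots, \pi(m_{\mu}))$ is an $\overline{R}$-basis of $\overline{M}$. Hence each $\pi(m_j')$ is a unique $\overline{R}$-combination of the $\pi(m_i)$, which produces the unique matrix $\overline{A}$ with $\pi(\mb') = \pi(\mb)\overline{A}$; by construction its $j$-th column is the coordinate vector of $\varphi_{\mb}(\pi(m_j'))$ in the basis $\eb$, which is the description announced in the statement.

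The key structural input is that $\det(M) = \bigwedge^{\mu} M$ is annihilated by $\Fitt_{\mu - 1}(M)$, so it is already an $\overline{R}$-module, and that exterior powers commute with base change. Thus the natural map $\Theta \colon \bigwedge^{\mu}_R M \rightarrow \bigwedge^{\mu}_{\overline{R}} \overline{M}$ sending $x_1 \wedge \cdots \wedge x_{\mu}$ to $\pi(x_1) \wedge \cdots \wedge \pi(x_{\mu})$ is an isomorphism: base change leaves $\det(M)$ unchanged while turning $\overline{M}$ into a free $\overline{R}$-module of rank $\mu$. On the free side, $\varphi_{\mb}$ induces an isomorphism $\bigwedge^{\mu}\varphi_{\mb}$ onto $\bigwedge^{\mu}_{\overline{R}}(\overline{R})^{\mu} \cong \overline{R}$, and under it $\pi(m_1') \wedge \cdots \wedge \pi(m_{\mu}')$ is the wedge of the columns of $\overline{A}$, equal by the classical determinant identity for a free module to $\det(\overline{A})\,(\eb_1 \wedge \cdots \wedge \eb_{\mu})$.

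It then remains to identify the composite. Composing $\Theta$ with $\bigwedge^{\mu}\varphi_{\mb}$ and the canonical isomorphism $\eb_1 \wedge \cdots \wedge \eb_{\mu} \mapsto 1$ gives an $R$-linear map $\det(M) \rightarrow \overline{R}$ carrying $m_1 \wedge \cdots \wedge m_{\mu}$ to $1$; since $\det(M)$ is cyclic, generated by $m_1 \wedge \cdots \wedge m_{\mu}$, this composite is forced to coincide with $\phi_{\mb}$. Evaluating it at $m_1' \wedge \cdots \wedge m_{\mu}'$ then yields $\det_{\mb}(\mb') = \det(\overline{A})$, as desired. The main obstacle I anticipate is purely conceptual rather than computational: over $R$ there need be no matrix $A$ with $\mb' = \mb A$, so the classical exterior-algebra formula cannot be applied directly; the whole argument hinges on passing to $\overline{R}$, where $\det(M)$ is untouched but $\overline{M}$ becomes free, and on checking that $\Theta$ is an isomorphism and that the resulting composite is exactly $\phi_{\mb}$ by the cyclicity of $\det(M)$.
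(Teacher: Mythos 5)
Your proof is correct. Note that the paper offers no proof of this lemma at all (it is stated with the proof omitted as straightforward), so there is no argument to compare against; your write-up supplies exactly the details the paper leaves implicit, and it does so along the natural route: existence and uniqueness of $\overline{A}$ from the freeness of $M/\Fitt_{\mu-1}(M)M$ over $R/\Fitt_{\mu-1}(M)$ with basis $\pi(\mb)$, the base-change isomorphism $\bigwedge^{\mu}_R M \cong \bigwedge^{\mu}_{\overline{R}}\overline{M}$ (using that $\Fitt_{\mu-1}(M)$ annihilates $\det(M)$), the classical determinant formula in the free module, and cyclicity of $\det(M)$ to identify the resulting composite with $\phi_{\mb}$. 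In particular, your observation that one cannot argue directly over $R$ (since no matrix $A$ with $\mb' = \mb A$ need exist there, and even when it does its determinant is only well defined modulo $\Fitt_{\mu-1}(M)$) correctly identifies why the passage to $\overline{R}$ is the essential step.
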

$\square$

\subsection{Ranks} \label{SubSecRanks}
Following  \cite[Section 6.7.2]{McCR87}, we define the Bass stable rank of a finitely generated $R$-module $M$.
An integer $n > 0$ lies in the \emph{stable range of} 
$M$ if for every $\mb = (m_1,\dots,m_{n + 1}) \in \V_{n + 1}(M)$, there is $r \in R$ such that $(m_1 + r m_{n + 1}, \dots, m_{n} + r m_{n + 1})$ belongs to $\V_n(M)$. 
If $n$ lies in the stable range of $M$, then so does $k$ for every $k > n$ \cite[Lemma 11.3.3]{McCR87}.
The \emph{stable rank $\sr(M)$ of $M$} is the least integer in the stable range of $M$. 
Extending naturally the linear rank introduced in \cite[Section 11.3.3]{McCR87} to finitely generated modules, we define the \emph{linear rank $\glr(M)$} of $M$ 
as the least integer $n \ge \mu(M)$ such that $\GL_k(R)$, acts transitively on $\V_k(M)$ for every $k > n$. 
Note that the analogous definition based on $\SL_k(R)$ yields also the rank $\glr(M)$. 
It is easily checked that $\SL_2(R)$ acts transitively on $\V_2(R)$. Hence $\glr(R) = 2$ is equivalent to 
$\glr(R) = 1$. A ring $R$ is said to be a \emph{Hermite ring} if $\glr(R) = 1$.
A ring $R$ is \emph{semi-local} if it has only finitely many maximal ideals. 

\begin{proposition}  \label{PropStableRank}
 The following hold:
\begin{itemize}
\item[$(i)$] $\sr(R) = 1$ if $R$ is semi-local \cite[Corollary 10.5]{Bas64}.
\item[$(ii)$] $\sr(R) \le \dim_{\text{Krull}}(R) + 1$ \cite[Corollary 2.3]{Heit84} or \cite[Theorem 2.4]{CLQ04}.
\end{itemize}
\end{proposition}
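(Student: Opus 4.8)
The plan is to deduce both bounds from classical results rather than to reprove them, since each is a well-known theorem and the proposition is stated mainly to record the exact references invoked later; I would nonetheless indicate the underlying mechanisms, giving $(i)$ in full and citing $(ii)$. For $(i)$, I would argue directly with a reduction modulo the Jacobson radical $J$ of $R$. Writing $\im_1, \dots, \im_k$ for the maximal ideals of $R$, the Chinese remainder theorem gives $R/J \simeq \prod_{i=1}^{k} R/\im_i$, a finite product of fields; since an element of $R$ is a unit if and only if its image in $R/J$ is a unit, the stable range condition for $R$ can be tested in $R/J$. Concretely, given $(a,b) \in \V_2(R)$, we have $Ra + Rb = R$, so at each $\im_i$ not both $a$ and $b$ lie in $\im_i$. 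At those $\im_i$ where $b \notin \im_i$, the condition $a + rb \notin \im_i$ forbids exactly one residue for $r$ modulo $\im_i$; at those $\im_i$ where $b \in \im_i$ we have $a \notin \im_i$ and hence $a + rb \equiv a \not\equiv 0$ automatically. As each residue field has at least two elements and only one residue is forbidden, a choice of $r$ avoiding all forbidden residues exists by the Chinese remainder theorem, and then $a + rb$ is a unit modulo every maximal ideal, hence a unit of $R$. This shows $\sr(R) = 1$, which is Bass's result \cite[Corollary 10.5]{Bas64}.

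For $(ii)$, the statement is the Bass stable range theorem $\sr(R) \le \dimK(R) + 1$. In the Noetherian setting this is Bass's original theorem, proved by induction on the Krull dimension, the key step being that a member of $\V_{d+2}(R)$ with $d = \dimK(R)$ can be shortened to a member of $\V_{d+1}(R)$ via a dimension-theoretic prime-avoidance argument that confines the locus where the naive reduction fails to a proper closed subset. The content of $(ii)$ as stated here is the removal of the Noetherian hypothesis, which is Heitmann's theorem \cite[Corollary 2.3]{Heit84}; an elementary and constructive treatment, replacing the classical dimension theory by the inductive characterization of Krull dimension, is given by Coquand, Lombardi and Quitté \cite[Theorem 2.4]{CLQ04}.

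The only genuinely deep ingredient is $(ii)$, whose self-contained proof requires the full machinery of Krull dimension in the non-Noetherian case; accordingly the plan is to invoke it as cited rather than to reproduce it. Part $(i)$, by contrast, is elementary and can be presented in full via the reduction modulo $J$ above, so I expect no real obstacle there.
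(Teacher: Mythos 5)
Your proposal is correct. Note first that the paper itself offers no proof of this proposition at all: it is stated purely as a record of two known theorems, with the references \cite[Corollary 10.5]{Bas64} and \cite[Corollary 2.3]{Heit84}, \cite[Theorem 2.4]{CLQ04} doing all the work. For part $(ii)$ you do exactly the same thing, and rightly so --- reproving the non-Noetherian stable range theorem would be far outside the scope of this paper. For part $(i)$, however, you go beyond the paper and supply a complete elementary argument, and that argument is sound: reduction modulo the Jacobson radical $J$ is legitimate because an element of $R$ is a unit precisely when its image in $R/J$ is a unit (the converse direction using that $1 + J \subseteq R^{\times}$), the Chinese remainder theorem applies because distinct maximal ideals are pairwise comaximal, and your counting of forbidden residues is exactly right: at a maximal ideal $\im_i$ with $b \notin \im_i$ exactly one residue class for $r$ is excluded, while at those with $b \in \im_i$ unimodularity forces $a \notin \im_i$ and nothing is excluded; since every residue field has at least two elements, an admissible $r$ exists. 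This matches the paper's definition of $\sr(R) = 1$ (for every $(a,b) \in \V_2(R)$ there is $r$ with $a + rb \in \V_1(R) = R^{\times}$). The trade-off is the expected one: the paper's citation-only statement is shorter and defers to Bass's more general treatment, whereas your version makes the semi-local case self-contained at the cost of half a page; either is acceptable, and your choice to prove only the elementary half while citing the deep half is a reasonable division.
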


A ring $R$ is \emph{K-Hermite} if for every $n \ge 2$ and every $\rb \in R^n$, there is $\gamma \in \GL_n(R)$ such that 
$\rb \gamma = (0,\dots, 0, d)$ for some $d \in R$.
The following lemma will be used to show that $\SL_n(R)$ acts transitively on $\V_n(I)$ for $R$ and $I$ as in Propositions \ref{PropENWithNGe3} and \ref{PropSL}.

\begin{lemma} \label{FreeOverKHermite}
Let $R$ be a ring which is a free module of rank $k$ over a subring $S$. Assume that $S$ is a K-Hermite ring.
Let $I$ be an ideal of $R$ such that $\mu(I) \le k$ and let $n > k$. Then for every $\mb \in \V_n(I)$, there is $\sigma \in \SL_n(R)$ such that
$\mb \sigma = (\mb', 0, \dots, 0)$ for some $\mb' \in \V_k(I)$.  
\end{lemma}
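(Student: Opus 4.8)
The plan is to push the whole problem down to the subring $S$, exploiting the fact that $\GL_n(S) \subseteq \GL_n(R)$ (since $S$ is a subring of $R$ with the same identity). This means that $S$-linear column operations are legitimate $\SL_n(R)$-moves up to a determinant correction, so the K-Hermite property of $S$ becomes directly usable on generating vectors over $R$.

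First I would bound the number of generators of $I$ as an $S$-module. Fixing an $S$-basis $\omega_1,\dots,\omega_k$ of $R$ and identifying $R$ with $S^k$, note that $I$ is finitely generated over $S$: if $I=\sum_{i}Rg_i$ then $I=\sum_{i,l}S\,\omega_l g_i$. Assembling a finite $S$-generating set of $I\subseteq S^k$ as the columns of a $k\times m$ matrix $D$ over $S$, I would bring $D$ to triangular (column-echelon) form by the standard iterated use of the K-Hermite property: clean the first row to the shape $(0,\dots,0,d_1)$ by some element of $\GL_m(S)$, then repeat on the shorter rows using column operations that fix the columns already handled, so as not to disturb the zeros produced earlier. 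Since the transforming matrix is invertible its columns still generate $I$, and at most $k$ of them are nonzero; hence $I$ is generated over $S$ by some $k'\le k$ elements $f_1,\dots,f_{k'}$. In particular $\mu(I)\le k'\le k$, consistent with the hypothesis.

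Next, writing each coordinate $m_i=\sum_l C_{li}f_l$ with $C=(C_{li})$ a $k'\times n$ matrix over $S$, so that $\mb=(f_1,\dots,f_{k'})\,C$, I would apply the same triangular reduction to $C$. As $n>k'$, there is $U\in\GL_n(S)\subseteq\GL_n(R)$ such that $CU$ has at least $n-k'\ge n-k$ zero columns. Because the $f_l$ are held fixed, the $j$-th coordinate of $\mb U=(f_1,\dots,f_{k'})(CU)$ equals $\sum_l f_l\,(CU)_{lj}$, so every zero column of $CU$ yields a zero coordinate of $\mb U$; and since $U$ is invertible over $R$, the tuple $\mb U$ still generates $I$. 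After a permutation gathering the zero coordinates at the end, $\mb U$ has the form $(\mb',0,\dots,0)$, where $\mb'$ is the length-$k$ tuple obtained by padding the $\le k$ nonzero entries with zeros; these generate $I$, so $\mb'\in\V_k(I)$. Finally I would fix the determinant: right-multiplying by $\diag(1,\dots,1,(\det U)^{-1})$, which only rescales one of the zero coordinates and lies in $\GL_n(R)$, turns the composite into an element $\sigma\in\SL_n(R)$ with $\mb\sigma=(\mb',0,\dots,0)$, as required.

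The conceptual crux — and the only step that is not purely formal — is the opening observation that the problem descends to $S$: once one sees that $S$-column operations are admissible and that $I$ needs only $k$ generators over $S$, everything else is the routine Hermite triangulation available over any K-Hermite ring. The points that genuinely require care are the bookkeeping ones: checking that the iterated single-row reductions preserve the rows already cleaned, that it is the zero columns of the coefficient matrix $C$ (and not of an abstract generator matrix of $I$) that kill the coordinates of $\mb$, and the final permutation-and-diagonal adjustment needed to land in $\SL_n(R)$ rather than merely in $\GL_n(R)$.
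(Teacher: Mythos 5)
Your proof is correct, and it supplies exactly the argument the paper leaves to the reader (its omitted ``straightforward induction on $k$'' is your row-by-row K-Hermite cleaning): descend to the subring $S$, use K-Hermite column reduction to see that $I$ needs at most $k$ generators over $S$, reduce the coefficient matrix $C$ of $\mb$ by some $U\in\GL_n(S)\subseteq\GL_n(R)$, and repair the determinant on a coordinate that is forced to vanish because $n>k$. One small correction: the final diagonal factor should be $\diag\bigl(1,\dots,1,\det(UP)^{-1}\bigr)$ rather than $\diag\bigl(1,\dots,1,(\det U)^{-1}\bigr)$, so that it also absorbs the sign of the permutation matrix $P$ --- harmless, since this factor still only rescales a zero coordinate.
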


The proof is a straightforward induction on $k$ consisting mainly in unrolling definitions. It is therefore omitted.

\section{Generating pairs and the determinant map} \label{SecThSL2}

\subsection{A local-global principle} \label{SecLocalGlobal}

In this section $M$ denotes a two-generated $R$-module. 
We shall prove that the $\SL_2(R)$-equivalence of two generating pairs of $M$ is a local property when $M$ is faithful. 
This property is key to generalize Theorem \ref{ThSL2} to every finitely generated $R$-modules of a ring $R$ 
with some Dedekind-like properties \cite[Theorem A]{Guy20b}. 

\begin{proposition} \label{PropEqLocal}
Let $M$ be a two-generated faithful $R$-module. Let $\mb, \mb' \in \V_2(M)$. Then the following are equivalent:
\begin{itemize}
\item[$(i)$] $\mb \sim_{\SL_2(R)} \mb'$,
\item[$(ii)$] $\mb \sim_{\SL_2(R_{\ip})} \mb'$ for every maximal ideal $\ip$ of $R$.
\end{itemize}
\end{proposition}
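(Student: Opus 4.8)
The implication $(i)\Rightarrow(ii)$ is immediate: if $\mb' = \mb g$ with $g \in \SL_2(R)$, then localizing this identity at any maximal ideal $\ip$ exhibits the image of $g$ in $\SL_2(\Rp)$ as a witness for $(ii)$. So the entire content lies in the converse $(ii)\Rightarrow(i)$, and the plan is to reduce it to a local-global statement about membership in an ideal; the vanishing of a quadratic term is the single place where faithfulness enters.

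First I would fix a global matrix. Since $\mb = (m_1,m_2)$ generates $M$, each $m_j'$ is an $R$-combination of $m_1,m_2$, so there is $A_0 \in \M_2(R)$ with $\mb A_0 = \mb'$; note $A_0$ need not be invertible. Writing $N = \{ B \in \M_2(R) : \mb B = 0 \}$, the full solution set of $\mb A = \mb'$ is the coset $A_0 + N$, and the columns of any $B \in N$ lie in the relation module $L = \{ \rb \in R^2 : \mb \rb^{\top} = 0 \}$. Because localization is exact, $N_\ip = \{ B \in \M_2(\Rp) : \mb B = 0\}$, and a local witness $g_\ip \in \SL_2(\Rp)$ with $\mb g_\ip = \mb'$ satisfies $g_\ip - A_0 \in N_\ip$. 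Thus hypothesis $(ii)$ says exactly that for every maximal $\ip$ there is $B_\ip \in N_\ip$ with $\det(A_0 + B_\ip) = 1$ in $\Rp$.

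The crux is to linearize the determinant. Expanding $\det(A_0 + B)$ by multilinearity in its two columns produces $\det(A_0)$, two terms linear in the columns of $B$, and the term $\det(B)$. The columns of $B$ lie in $L$, so $\det(B)$ is an $R$-combination of $2 \times 2$ minors of a relation matrix of $\mb$, whence $\det(B) \in \Fitt_0(M)$ (cf. the description of relations in Lemma \ref{LemFittMuMinusOne}). Since $\Fitt_0(M) \subseteq \ann(M)$ and $M$ is faithful, this quadratic term vanishes identically, so $B \mapsto \det(A_0 + B)$ is an \emph{affine-linear} map $N \to R$, say $B \mapsto \det(A_0) + \ell(B)$ with $\ell \colon N \to R$ an $R$-module homomorphism. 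Running the same computation over $\Rp$, using that Fitting ideals commute with localization so that $\Fitt_0(M_\ip) = \Fitt_0(M)_\ip = 0$, converts the local data into the assertion that $c := 1 - \det(A_0)$ lies in $\ell_\ip(N_\ip) = \ell(N)_\ip$ for every maximal $\ip$.

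It then remains to observe that $c$ lies in the ideal $J := \ell(N)$ globally. This is the standard local-global principle for ideal membership: $c \in J$ iff the class of $c$ in $R/J$ is zero, and an element of a module vanishes iff all of its localizations at maximal ideals do; since $(c + J)_\ip = 0$ is precisely $c \in J_\ip$, the local memberships force $c \in J$. Choosing $B \in N$ with $\ell(B) = c$ gives $A := A_0 + B$ with $\det A = \det(A_0) + \ell(B) = 1$ and $\mb A = \mb' + 0 = \mb'$, so $A \in \SL_2(R)$ realizes $\mb \sim_{\SL_2(R)} \mb'$. The only genuine obstacle is the quadratic term, and faithfulness is exactly what removes it: without $\Fitt_0(M) = 0$ the equation $\det(A_0 + B) = 1$ would be a quadric in $B$, for which no such elementary local-global principle is available.
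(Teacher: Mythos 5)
Your proof is correct, but it takes a genuinely different route from the paper's. The paper proves this proposition by reducing $\SL_2$-equivalence to the solvability of a single linear system with coefficients in $M$ and then invoking a local-global principle for such systems (Lemma \ref{LemSysLin}): since $\mb\sigma\adj(\sigma)=\det(\sigma)\mb$, faithfulness and the fact that $\mb$ generates $M$ make the quadratic condition $\det(\sigma)=1$ equivalent, given $\mb\sigma=\mb'$, to the \emph{linear} condition $\mb'\adj(\sigma)=\mb$; hence $\mb\sim_{\SL_2(R)}\mb'$ holds iff the system $\mb\sigma=\mb'$, $\mb'\adj(\sigma)=\mb$ is solvable over $R$, and solvability is local. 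You instead fix a global $A_0$ with $\mb A_0=\mb'$, show that $B\mapsto\det(A_0+B)$ is affine-linear on the coset $A_0+N$ because $\det(B)\in\Fitt_0(M)=0$ for all $B\in N$, and conclude with the local-global principle for membership in the ideal $\ell(N)$. This is essentially the mechanism of the paper's Lemma \ref{LemImPhi}, whose map $\Phi_A$ is exactly your $\ell$ and where faithfulness is used in the same way to kill $\det(N)$; the paper, however, deploys that lemma to prove Theorem \ref{ThSL2AndFitt1I}, not this proposition. As for what each approach buys: the paper's adjugate trick needs no Fitting-ideal theory at this point, keeps the two generating pairs in a symmetric role, and isolates a reusable local-global lemma; your version reduces the local-global input to its most elementary form (an element of $R/J$ is zero iff it is zero in every localization at a maximal ideal) and, pleasingly, routes faithfulness through the equality $\Fitt_0(M)=0$, which, unlike faithfulness itself, commutes with localization --- this is precisely why your local step ($\det(B_\ip)=0$ for $B_\ip\in N_\ip$) is airtight, whereas a naive appeal to ``$M_\ip$ is faithful over $\Rp$'' would be false in general.
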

As an immediate consequence of Proposition \ref{PropEqLocal}, we observe that $\SL_2(R)$ acts transitively on $\V_2(M)$ 
if $M$ is a two-generated faithful projective module of constant rank $1$. 
This well-known fact is instrumental in the proof of Maass theorem on the number of cusps of the Hilbert modular group \cite[Proposition I.1.1]{vdGeer88}.
More generally, we have

\begin{corollary}
Let $M$ be a two-generated locally cyclic $R$-module. If the natural homomorphism $\SL_2(R) \rightarrow \SL_2(R/\ann(M))$ is surjective, 
then $\SL_2(R)$ acts transitively on $\V_2(M)$.
\end{corollary}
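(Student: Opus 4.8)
The plan is to transfer the problem to the quotient ring $\ovR \Doteq R/\ann(M)$, over which $M$ becomes faithful, and then to lift the conclusion back to $R$ along the surjection $\SL_2(R) \twoheadrightarrow \SL_2(\ovR)$. Set $\ib \Doteq \ann(M)$. Regarding $M$ as an $\ovR$-module, it is still two-generated and now \emph{faithful}, so Proposition \ref{PropEqLocal} is applicable with $R$ replaced by $\ovR$. First I would record the key structural observation that the $\SL_2(R)$-action on $\V_2(M)$ factors through $\SL_2(\ovR)$: for $g \in \SL_2(R)$ and $\mb \in M^2$, the pair $\mb g$ depends only on the entries of $g$ modulo $\ib$, because $\ib M = 0$. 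Hence it suffices to prove that $\SL_2(\ovR)$ acts transitively on $\V_2(M)$, after which any matrix realizing an equivalence can be pulled back through the surjection $\SL_2(R) \to \SL_2(\ovR) = \SL_2(R/\ann(M))$.

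Next I would analyse the local structure of $M$ over $\ovR$. Since $M$ is finitely generated, its annihilator commutes with localization, so for every maximal ideal $\overline{\ip}$ of $\ovR$ the module $M_{\overline{\ip}}$ is faithful over the local ring $\ovR_{\overline{\ip}}$; being cyclic by the locally cyclic hypothesis, it must be isomorphic to $\ovR_{\overline{\ip}}$. Thus $\V_2(M_{\overline{\ip}})$ coincides with the set of unimodular rows $\V_2(\ovR_{\overline{\ip}})$, on which $\SL_2(\ovR_{\overline{\ip}})$ acts transitively (as noted in Section \ref{SubSecRanks}). Consequently, given any $\mb, \mb' \in \V_2(M)$, their images in $\V_2(M_{\overline{\ip}})$ are $\SL_2(\ovR_{\overline{\ip}})$-equivalent at every maximal ideal $\overline{\ip}$, and Proposition \ref{PropEqLocal}, applied over $\ovR$, upgrades this local equivalence to $\mb \sim_{\SL_2(\ovR)} \mb'$. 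This is precisely the transitivity of $\SL_2(\ovR)$ on $\V_2(M)$ required above; it is also the mechanism underlying the projective-rank-one remark following Proposition \ref{PropEqLocal}, now obtained without invoking projectivity.

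Finally, combining the two steps, given $\mb, \mb' \in \V_2(M)$ I would pick $\overline{g} \in \SL_2(\ovR)$ with $\mb' = \mb\,\overline{g}$, lift it to some $g \in \SL_2(R)$ by the surjectivity hypothesis, and conclude $\mb' = \mb g$ from the factoring of the action through $\SL_2(\ovR)$. I expect the only genuinely delicate point to be the local identification $M_{\overline{\ip}} \simeq \ovR_{\overline{\ip}}$: one must verify that passing to $\ovR$ makes $M$ faithful and that faithfulness is preserved under localization, which is exactly where the finite generation of $M$ enters. The remaining ingredients—the factoring of the action and the lifting of $\overline{g}$—are formal once the surjectivity hypothesis is used.
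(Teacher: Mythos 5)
Your proof is correct and is exactly the argument the paper intends: the corollary is stated as an immediate consequence of Proposition \ref{PropEqLocal} (its proof is left to the reader), and your route—pass to $\ovR = R/\ann(M)$ where $M$ becomes faithful, use that annihilators of finitely generated modules commute with localization to get $M_{\overline{\ip}} \simeq \ovR_{\overline{\ip}}$ from local cyclicity plus faithfulness, invoke transitivity of $\SL_2$ on unimodular pairs locally, apply the local-global proposition over $\ovR$, and lift the resulting matrix through the surjection $\SL_2(R) \rightarrow \SL_2(\ovR)$ using that the action kills $\ann(M)$—is precisely that intended argument, with the delicate points correctly identified and handled.
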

$\square$

\begin{remark} \label{RemStableRank2}
The natural map $\SL_n(R) \rightarrow \SL_n(\overline{R})$ is surjective for every $n$ and every quotient $\overline{R}$ of $R$ if $\sr(R) \le 2$ \cite[Corollary 8.3]{EO67}. 
See also the partial stability result \cite[Theorem 8.3]{EO67} when $R$ is a ring of univariate polynomials over a principal ideal domain.
\end{remark}

The proof of Proposition \ref{PropEqLocal} relies essentially on

\begin{lemma} \label{LemSysLin}
Let $M$ be an $R$-module and let $A = (a_{ij})$ be an $m \times n$ matrix with coefficients in $M$. Let $b = (b_i)$ an $n \times 1$ 
column vector with coefficients in $R$. Then the following are equivalent:
\begin{itemize}
\item[$(i)$] The linear system $Ax =b$ has a solution in $R^m$,
\item[$(ii)$] For every maximal ideal $\ip$ of $R$, the linear system $Ax =b$ has a solution in $\Rp^m$.
\end{itemize}
\end{lemma}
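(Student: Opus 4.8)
The implication $(i) \Rightarrow (ii)$ is immediate: applying the localization functor to a solution $x \in R^m$ of $Ax = b$ yields a solution of the same system over $R_{\ip}$ for every $\ip$. The content lies in the converse, and the plan is to recast solvability as the vanishing of a single module element and then invoke the local-global principle for such vanishing. First I would introduce the $R$-linear map $\Phi$ induced by $A$, sending a candidate solution $x$ to $Ax$; this is well defined and $R$-linear because the entries of $A$ lie in $M$ while those of $x$ lie in $R$. By construction, $Ax = b$ is solvable over $R$ precisely when $b$ lies in $\operatorname{Im}(\Phi)$, equivalently when the class $\overline{b}$ of $b$ vanishes in the cokernel $C \Doteq \operatorname{coker}(\Phi)$. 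Likewise, for each maximal ideal $\ip$, the system is solvable over $R_{\ip}$ exactly when $\overline{b}$ vanishes in the cokernel of $\Phi_{\ip}$.

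The bridge between the local and global cokernels is the exactness of localization. Since localizing is an exact functor, it commutes with the formation of images and cokernels, so that $C_{\ip} \simeq \operatorname{coker}(\Phi_{\ip})$ and the image of $\overline{b}$ in $C_{\ip}$ is the class of $b$ computed over $R_{\ip}$. Here one must check the compatibility of the matrix action with localization, namely that $\Phi_{\ip}$ is again the map induced by $A$ regarded over $R_{\ip}$; this follows because localization is monoidal and $\Phi$ is defined by finite $R$-linear combinations of the entries $a_{ij}$. Granting this, hypothesis $(ii)$ asserts exactly that $\overline{b}$ maps to $0$ in $C_{\ip}$ for every maximal ideal $\ip$.

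It then remains to apply the elementary fact that an element of an $R$-module is zero if and only if its image in every localization at a maximal ideal is zero. Concretely, $\overline{b} = 0$ in $C_{\ip}$ means there is $s \in R \setminus \ip$ with $s\overline{b} = 0$, so $\ann(\overline{b}) \not\subseteq \ip$; if this holds for all maximal $\ip$, then $\ann(\overline{b})$ is contained in no maximal ideal and hence equals $R$, which forces $\overline{b} = 0$ and therefore $(i)$. I expect no serious obstacle in this argument: the only delicate points are the compatibility of $\Phi$ with localization and the commutation of cokernel with localization, both standard, and I note that no finiteness or faithfulness hypothesis on $M$ is needed, since the final step concerns the single element $\overline{b}$.
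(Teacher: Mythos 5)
Your proof is correct, but it takes a genuinely different route from the paper's. The paper argues constructively: for each maximal ideal $\ip$ it clears denominators in a local solution to produce $x_j(\ip) \in R$ and $s(\ip), t(\ip) \in R \setminus \ip$ with $t(\ip)\sum_{j} x_j(\ip) a_{ij} = s(\ip)t(\ip) b_i$ for all $i$, observes that the elements $s(\ip)t(\ip)$ lie in no maximal ideal and hence generate $R$, writes $1 = \sum_k \lambda_k s(\ip_k)t(\ip_k)$, and exhibits the explicit global solution $x_j = \sum_k \lambda_k t(\ip_k) x_j(\ip_k)$ (this is essentially the cited \cite[Proposition 1]{HS86}). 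You instead recast solvability as the vanishing of the class $\overline{b}$ in $\operatorname{coker}(\Phi)$, where $\Phi: R^m \rightarrow M^n$ is the map induced by $A$, and then combine exactness of localization (so that cokernels commute with localization) with the local-global principle for vanishing of a single module element. The underlying engine is the same in both cases---an ideal not contained in any maximal ideal equals $R$, applied by you to $\ann(\overline{b})$ and by the paper to the ideal generated by the $s(\ip)t(\ip)$---but your packaging is more conceptual and shorter, at the price of invoking standard facts, whereas the paper's argument is self-contained and actually constructs the solution rather than only asserting its existence. One remark: your cokernel formulation forces one to read $b$ as a vector with entries in $M$, so that $\overline{b}$ lives in a quotient of $M^n$; the statement says ``coefficients in $R$'', which is evidently a slip, since otherwise the equation $\sum_j x_j a_{ij} = b_i$ equates an element of $M$ with an element of $R$. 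The paper's own proof, and its application in Proposition \ref{PropEqLocal}, likewise treat the $b_i$ as elements of $M$, so your reading is the intended one and this is not a gap in your argument.
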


Lemma \ref{LemSysLin} is a simple variation of \cite[Proposition 1]{HS86}. 
A proof is provided for the convenience of the reader.
\begin{sproof}{Proof of Lemma \ref{LemSysLin}}
The implication $(i) \Rightarrow (ii)$ is evident. Assuming that $(ii)$ holds true, we shall prove that so does $(i)$. 
For every prime ideal $\ip$, we can find by hypothesis $m$ elements $x_j(\ip)$ in $R$ with $j \in \{1, \dots, m\}$ and two elements $s(\ip), t(\ip) \in R \setminus \ip$, such that 
$t(\ip)\left(\sum_{j = 1}^m x_j(\ip)a_{ij} \right)  = s(\ip) t(\ip) b_i$ holds for every $i \in \{ 1, \dots, n \}$. 
Since the ideal of $R$ generated by all the elements $s(\ip)t(\ip)$ is not contained in any maximal ideal of $R$, 
these elements generate $R$. Therefore we can find $l$ elements $\lambda_1, \dots, \lambda_l \in R$ 
for some $l \ge 1$ and $l$ prime ideals $\ip_1, \dots, \ip_l$ such that $\sum_{k = 1}^l \lambda_k s(\ip_k)t(\ip_k) = 1$. 
Setting $x_j \Doteq \sum_{k = 1}^l \lambda_k t(\ip_k)x_j(\ip_k)$ yields a solution $x = (x_j)$ of the linear system $Ax = b$ in $R^m$.
\end{sproof}

If $A$ is a square matrix over $R$, we denote by $\adj(A)$ its \emph{adjugate matrix}, i.e., 
the transpose of the cofactor matrix of $A$. The adjugate matrix satisfies the identities $A \adj(A) = \adj(A) A = \det(A) 1_n$. 
If $A = \begin{pmatrix} a_{11} & a_{12} \\ a_{21} & a_{22} \end{pmatrix}$, then $\adj(A) = \begin{pmatrix} a_{22} & -a_{12} \\ - a_{21} & a_{11} \end{pmatrix}$.

\begin{sproof}{Proof of Proposition \ref{PropEqLocal}}
In order to prove the equivalence $(i) \Leftrightarrow (ii)$, it suffices to show that $\mb \sim_{G} \mb'$ is equivalent to the existence of a solution of a linear system as in Lemma \ref{LemSysLin}. 
Let $\sigma$ be a $2 \times 2$ matrix over $R$ such that $\mb \sigma = \mb'$. Since $M$ is faithful and $\mb$ generates $M$, the identity $\det(\sigma) = 1$ 
holds true if and only if $\mb \sigma\adj(\sigma) = \mb$, which is equivalent to $\mb'\adj(\sigma) = \mb$. 
Therefore $\mb \sim_{G} \mb'$ holds true if and only if the following linear system with coefficients in $M$
$$\left\{
\begin{array}{c}
\mb \sigma = \mb', \\
\mb'\adj(\sigma) = \mb
\end{array}\right.
$$
has a solution $\sigma \in R^4$.
\end{sproof}

\subsection{Proof of Theorem \ref{ThSL2}} \label{SecProofOfMainTh}

In this section, we shall prove Theorem \ref{ThSL2}, that is

\begin{theorem} \label{ThSL2AndFitt1I}
Let $I$ be a two-generated ideal of $R$ such that $\Fitt_1(I) = II^{-1} $, e.g., $I$ is generated by two regular elements. 
Let $\mb, \mb' \in \V_2(I)$.
Then the following are equivalent.
\begin{itemize}
\item[$(i)$] $\mb \sim_{\SL_2(R)} \mb'$,
\item[$(ii)$] $\det_{\mb}(\mb') = 1$.
\end{itemize}
In other words, the determinant map induces an injection from $\V_2(I)/\SL_2(R)$ into $\V_1(\det(I)) \simeq (R/\Fitt_1(R))^{\times}$.
\end{theorem}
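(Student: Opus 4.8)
The plan is to establish the equivalence $(i)\Leftrightarrow(ii)$; the concluding ``injection'' statement is then a reformulation, since $\det(I)$ is cyclic with annihilator $\Fitt_1(I)$, so $\V_1(\det(I)) \simeq (R/\Fitt_1(I))^{\times}$, and $\det_{\mb}$ records the ratio of two generators of $\det(I)$. The implication $(i)\Rightarrow(ii)$ is immediate: if $\mb' = \mb\sigma$ with $\sigma \in \SL_2(R)$, then Lemma \ref{LemDet}$(i)$ gives $\det_{\mb}(\mb') = \det(\sigma)\det_{\mb}(\mb) = 1$.

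The content is $(ii)\Rightarrow(i)$. Since $\mb$ generates $I$ and the entries of $\mb'$ lie in $I$, I can pick a $2\times 2$ matrix $\sigma_0$ over $R$ with $\mb' = \mb\sigma_0$. By Lemma \ref{LemDetPi} (equivalently, Lemma \ref{LemDet}$(i)$ applied to $\mb\sigma_0$) we have $\det_{\mb}(\mb') = \det(\sigma_0) + \Fitt_1(I)$, so hypothesis $(ii)$ reads $\det(\sigma_0) = 1 - f$ for some $f \in \Fitt_1(I)$. It therefore suffices to find a matrix $N$ over $R$ whose columns are syzygies of $\mb$---so that $\mb N = 0$ and $\mb(\sigma_0 + N) = \mb'$---while $\det(\sigma_0 + N) = 1$; then $\sigma := \sigma_0 + N \in \SL_2(R)$ witnesses $\mb \sim_{\SL_2(R)} \mb'$.

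The syzygies I use come from $I^{-1}$: for any $y \in I^{-1}$ the pair $(ym_2, -ym_1)$ has entries in $R$ (as $yI \subseteq R$) and is a relation of $\mb = (m_1, m_2)$, irrespective of regularity of the $m_i$. Taking $N$ with columns $(y_1 m_2, -y_1 m_1)^{\top}$ and $(y_2 m_2, -y_2 m_1)^{\top}$ for $y_1, y_2 \in I^{-1}$, a direct expansion gives $\det(N) = 0$ together with
\[
\det(\sigma_0 + N) = \det(\sigma_0) + y_1 m_2' - y_2 m_1',
\]
where the linear term is identified using $m_1' = m_1(\sigma_0)_{11} + m_2(\sigma_0)_{21}$ and $m_2' = m_1(\sigma_0)_{12} + m_2(\sigma_0)_{22}$. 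Thus I must solve $y_1 m_2' - y_2 m_1' = f$ with $y_1, y_2 \in I^{-1}$. Here the hypothesis $\Fitt_1(I) = II^{-1}$ is decisive: $f \in \Fitt_1(I) = II^{-1}$ can be written $f = \sum_k x_k w_k$ with $x_k \in I$ and $w_k \in I^{-1}$; expanding each $x_k$ in the generators $m_1', m_2'$ of $I$ (legitimate since $\mb' \in \V_2(I)$) and collecting coefficients yields $f = m_1' z_1 + m_2' z_2$ with $z_1, z_2 \in I^{-1}$. Choosing $y_1 = z_2$ and $y_2 = -z_1$ then solves the equation and completes the construction.

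I expect the main obstacle to be precisely this last matching: forcing the determinant defect $f$ to be expressible as $y_1 m_2' - y_2 m_1'$ with multipliers in $I^{-1}$, which are exactly the multipliers producing admissible $R$-valued syzygies. The inclusion $\Fitt_1(I) \subseteq II^{-1}$ is what makes this possible, and it is the only place the hypothesis is used (for regular generators it holds automatically, since then every relation of $\mb$ has the form $(ym_2, -ym_1)$, whence $\Fitt_1(I) = I^{-1}m_1 + I^{-1}m_2 = II^{-1}$). The argument above is global and does not invoke Proposition \ref{PropEqLocal}; alternatively one could reduce to the local case via Proposition \ref{PropEqLocal} and Lemma \ref{LemDet}$(iii)$, but the direct construction seems cleaner.
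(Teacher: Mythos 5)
Your proof is correct and essentially identical to the paper's: your syzygy matrices with columns $(y_i m_2, -y_i m_1)^{\top}$ for $y_i \in I^{-1}$, the expansion $\det(\sigma_0+N)=\det(\sigma_0)+y_1 m_2'-y_2 m_1'$, and the realization of the determinant defect $f\in\Fitt_1(I)=II^{-1}$ via the generators of $I$ are exactly the content of the paper's Lemma \ref{LemImPhi} (the matrices $N(r,s)$ and the map $\Phi_A$) as used in the paper's proof of the theorem. The only difference is presentational: the paper isolates this computation in a standalone lemma (also recording the containment $\Phi_A(\Nmm)\subseteq\Fitt_1(I)$), whereas you inline it.
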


The proof of Theorem \ref{ThSL2AndFitt1I} relies on the following two lemmas.

\begin{lemma} \label{LemImPhi}
Let $I$ be a two-generated ideal of $R$. 
Let $\mb, \mb' \in \V_2(I)$ and let $A$ be a $2 \times 2$ matrix over $R$ such that $\mb A = \mb'$. Let $\Nmm$ be the set of $2 \times 2$ matrices $N$ over $R$ such that $\mb N = (0, 0)$.
Let $\Phi_A: \Nmm \rightarrow R$ be the map defined by $\Phi_A(N) = \det(A + N) - \det(A)$. Then we have
$$II^{-1} \subseteq \Phi_A(\Nmm)  \subseteq \Fitt_1(I).$$
If $I$ is moreover faithful, then $\Phi_A(\Nmm)$ is an ideal of $R$.
\end{lemma}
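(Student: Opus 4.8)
The plan is to base everything on the $2\times 2$ determinant expansion
$$\det(A + N) = \det(A) + \trace(\adj(A)N) + \det(N),$$
valid over any commutative ring, so that $\Phi_A(N) = \trace(\adj(A)N) + \det(N)$ decomposes into an $R$-linear part and a quadratic part. It is also worth recording the reformulation that, since $\mb(A+N) = \mb A + \mb N = \mb'$ exactly when $N \in \Nmm$, the coset $A + \Nmm$ is precisely the set of matrices $B$ with $\mb B = \mb'$; hence $\Phi_A(\Nmm) = \{\det B : \mb B = \mb'\} - \det(A)$. I will mainly use the additive decomposition, keeping this second viewpoint as motivation.

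For the upper inclusion I would first observe that every entry of an $N \in \Nmm$ lies in $\Fitt_1(I)$: the two columns of $N$ are relations of $\mb$, so each entry is involved in a relation of $\mb$ and therefore lies in $\Fitt_1(I)$ by Lemma \ref{LemFittMuMinusOne}. Since $\Fitt_1(I)$ is an ideal, both $\trace(\adj(A)N)$ (a sum of terms $\pm a_{kl}n_{ij}$) and $\det(N)$ (a sum of products of entries of $N$) lie in $\Fitt_1(I)$, whence $\Phi_A(N) \in \Fitt_1(I)$.

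The core of the argument, and the step I expect to be the main obstacle, is the lower inclusion $II^{-1} \subseteq \Phi_A(\Nmm)$, because it requires exhibiting, for a prescribed element of $II^{-1}$, an explicit matrix of $\Nmm$ whose $\Phi_A$-value realizes it. The device is that for each $y \in I^{-1}$ the column $(y m_2, -y m_1)^{\top}$ lies in $R^2$ (as $y m_i \in R$) and is a relation of $\mb$, since by commutativity $m_1(y m_2) + m_2(-y m_1) = 0$. Given $y, y' \in I^{-1}$, I would assemble these as the columns of
$$N = \begin{pmatrix} y' m_2 & -y m_2 \\ -y' m_1 & y m_1 \end{pmatrix} \in \Nmm,$$
note that $\det(N) = 0$, and compute, using $\mb A = \mb'$ (that is $m_1' = a_{11}m_1 + a_{21}m_2$ and $m_2' = a_{12}m_1 + a_{22}m_2$), that $\trace(\adj(A)N) = y m_1' + y' m_2'$, so that $\Phi_A(N) = y m_1' + y' m_2'$. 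Since $\mb'$ generates $I$, one has $II^{-1} = m_1' I^{-1} + m_2' I^{-1} = \{y m_1' + y' m_2' : y, y' \in I^{-1}\}$, and the inclusion follows.

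Finally, for the ideal property under the extra hypothesis that $I$ is faithful, I would show the quadratic part vanishes identically on $\Nmm$. For $N \in \Nmm$, substituting the column relations $m_1 n_{1j} = -m_2 n_{2j}$ into $\det(N)$ yields $m_i \det(N) = 0$ for $i = 1, 2$, so $I\det(N) = 0$ and $\det(N) \in \ann(I)$. Faithfulness forces $\det(N) = 0$, so on $\Nmm$ we have $\Phi_A(N) = \trace(\adj(A)N)$, an $R$-linear map. Its image over the $R$-module $\Nmm$ is then an $R$-submodule of $R$, i.e. an ideal, which completes the proof.
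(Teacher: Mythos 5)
Your proof is correct and takes essentially the same route as the paper's: the same expansion $\det(A+N)=\det(A)+\trace(\adj(A)N)+\det(N)$, the same family of relation matrices witnessing the lower inclusion (your $N$ is the paper's $N(r,s)$ with $r=y'$, $s=-y$), and the same linearity argument for the ideal property in the faithful case. The only differences are in how sub-steps are justified: where the paper works with a free presentation $\varphi$ of $I$ and the minor ideals $I_1(\varphi)$, $I_2(\varphi)$ (citing Eisenbud for $\Fitt_0(I)=0$ when $I$ is faithful), you argue directly from Lemma \ref{LemFittMuMinusOne} and the explicit computation $m_i\det(N)=0$, which makes the argument a bit more self-contained but is not a different method.
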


\begin{lemma} \label{LemFitt1}
Let $I$ be an ideal generated by two regular elements of $R$.  Then $\Fitt_1(I) = II^{-1}$.
\end{lemma}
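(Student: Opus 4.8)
The plan is to establish the two inclusions separately, the regularity hypothesis entering only one of them. Write $I = aR + bR$ with $a, b$ regular, so that $\mb = (a,b) \in \V_2(I)$, the ideal $I$ is regular, and both $I^{-1}$ and $II^{-1}$ are well-defined ideals of $R$.

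For the inclusion $II^{-1} \subseteq \Fitt_1(I)$ I would simply invoke Lemma \ref{LemImPhi} applied to $\mb' = \mb$ and $A = 1_2$, which yields $II^{-1} \subseteq \Phi_{1_2}(\Nmm) \subseteq \Fitt_1(I)$ and requires no regularity. Alternatively, this can be checked by hand: for $x \in I^{-1}$ one has $xa, xb \in R$ with $(xb)a - (xa)b = 0$, so $(xb, -xa)$ is a relation of $\mb$ and hence $xa, xb \in \Fitt_1(I)$ by Lemma \ref{LemFittMuMinusOne}; since every element of $II^{-1}$ is an $R$-combination of products of the form $xa$ and $xb$ with $x \in I^{-1}$, the inclusion follows from $\Fitt_1(I)$ being an ideal.

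For the reverse inclusion $\Fitt_1(I) \subseteq II^{-1}$ I would use Lemma \ref{LemFittMuMinusOne} to describe $\Fitt_1(I)$ as the set of elements of $R$ involved in a relation of $\mb$. Given $r \in \Fitt_1(I)$, there is $s \in R$ with $ra + sb = 0$ or $sa + rb = 0$. In the first case, regularity of $b$ allows me to form $x = r/b \in K(R)$; then $xb = r \in R$ and $xa = ra/b = -s \in R$, so $x \in I^{-1}$ and $r = bx \in bI^{-1} \subseteq II^{-1}$. The second case is symmetric, dividing by the regular element $a$ to obtain $r/a \in I^{-1}$ and $r = a(r/a) \in aI^{-1} \subseteq II^{-1}$.

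The only delicate point — and the sole place where the hypothesis is genuinely used — is the passage to the total ring of quotients: regularity of $a$ and $b$ is exactly what makes $r/b$ and $r/a$ well-defined in $K(R)$ and turns the relation $ra = -sb$ into the identity $ra/b = -s$ in $R$. Everything else is a routine unwinding of the definitions of $I^{-1}$ and of the Fitting ideals, so I anticipate no substantial obstacle.
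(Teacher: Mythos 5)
Your proof is correct and follows essentially the same route as the paper's: both rest on Lemma \ref{LemFittMuMinusOne} to identify $\Fitt_1(I)$ with the coefficients of relations of $(a,b)$, and both use regularity of $a$ and $b$ to divide in $K(R)$ and match those coefficients with elements of $bI^{-1}$ and $aI^{-1}$. The paper merely packages your two inclusions as one chain of ideal identities, $\Fitt_1(I) = (a:b)+(b:a) = \left(\frac{Ra\cap Rb}{ab}\right)(Ra+Rb) = I^{-1}I$, whereas you split off the easy inclusion $II^{-1}\subseteq\Fitt_1(I)$ (via Lemma \ref{LemImPhi} or by hand) and handle the reverse one elementwise.
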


\begin{proof}
Let $\mb = (a, b) \in \V_2(I)$. By Lemma \ref{LemFittMuMinusOne}, the ideal $\Fitt_1(I)$ is generated by the components of the row vectors $\cb$ such that $\mb \cb^{\top} = 0$. Therefore 
\begin{eqnarray*}
\Fitt_1(I) & =  & \left(a :b\right) + \left(b : a\right) = \left(R \cap R\frac{a}{b}\right) + \left(R \cap R\frac{b}{a}\right) \\
              & = & \left(\frac{Ra \cap Rb}{ab}\right)\left(Ra + Rb\right) = J I.
\end{eqnarray*}
where $J \Doteq \frac{Ra \cap Rb}{ab}$.
Since by definition $I^{-1}  =   \left(\frac{1}{a}R\right) \cap \left(\frac{1}{b}R\right) = J$, the result follows.
\end{proof}

\begin{sproof}{Proof of Lemma \ref{LemImPhi}}
It is straightforward to check that $\Phi_A(N) =\det(N) + \trace(\adj(A)N)$. Let us fix $N \in \Nmm$ and an exact sequence
$$
F \xrightarrow[]{\varphi} R^2 \xrightarrow[]{\pi_{\mb}} I \rightarrow 0 
$$
where $F$ is a free $R$-module and $\pi_{\mb}$ is the $R$-module epimorphism mapping the canonical basis of $R^2$ to $\mb$.
Since $\mb N = 0$, the columns of $N$ are $R$-linear combinations of the columns of the matrix of $\varphi$. 
Therefore, we have $\det(N) \in I_2(\varphi)$ and $\trace(\adj(A)N) \in I_1(\varphi)$, which yields $\Phi_A(N) \in \Fitt_1(I)$.
Let us write $\mb = (a,b)$ and $\mb' = (a', b')$ and set 
$N(r,s) \Doteq \begin{pmatrix}
rb & sb \\
-ra & -sa
\end{pmatrix}
$ for $r,s \in I^{-1}$.
Using the identity $\mb' = \mb A$,  we easily infer that $\Phi_A(N(r,s)) = rb' - sa'$, which shows that $ II^{-1}  \subseteq \Phi_A(\Nmm)$.
 If in addition $I$ is faithful, then we have $\Fitt_0(I) = I_2(\varphi) = 0$ \cite[Proposition 20.7.a]{Eis95}. 
 As a result we get $\det(N) = 0$ and hence $\Phi_A(N) = \trace(\adj(A)N)$, so that $\Phi_A(\Nmm)$ is an ideal of $R$.
\end{sproof}

We are now in position to prove Theorem \ref{ThSL2AndFitt1I}.

\begin{sproof}{Proof of Theorem \ref{ThSL2AndFitt1I}}
The implication $(i) \Rightarrow (ii)$ is evident. 
Let us prove $(ii) \Rightarrow (i)$.
Since $\det_{\mb}(\mb') = 1$, there is a $2 \times 2$ matrix $A$ over $R$ such that $\mb A = \mb'$ and $\det(A) \in 1 + \Fitt_1(I)$.
As $II^{-1} = \Fitt_1(I)$, we infer from Lemma \ref{LemImPhi} that $\Phi_A(\Nmm) = \Fitt_1(I)$. Hence we can find a matrix $B$ such that 
$\mb B = \mb'$ and $\det(B) = 1$, which completes the proof. 
\end{sproof}

The following lemma provides a condition under which 
Theorem \ref{ThSL2AndFitt1I} applies without assuming the generators of $I$ to be regular.

\begin{lemma} \label{LemMaximalIdeal} Let $I$ be a two-generated maximal ideal of $R$ which is faithful but not projective of constant rank $1$. 
Then we have $\Fitt_1(I) = II^{-1} = I$.
\end{lemma}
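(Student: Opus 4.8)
The plan is to exploit that $I$ is maximal: both $\Fitt_1(I)$ and $II^{-1}$ will turn out to be ideals of $R$ lying between $I$ and $R$, so each equals either $I$ or $R$, and the hypothesis that $I$ is not projective of constant rank $1$ will rule out the value $R$ in both cases.

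First I would record that $\mu(I) = 2$. Indeed, were $I$ principal, its generator would be regular, since $I$ is faithful, so $I$ would be free of rank $1$ and hence projective of constant rank $1$, contrary to hypothesis. Thus I may write $I = Ra + Rb$ with $\mb = (a, b) \in \V_2(I)$.

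Next come the two containments. The trivial relation $ba - ab = 0$, that is the relation vector $(b, -a)$, shows via Lemma \ref{LemFittMuMinusOne} that both $a$ and $b$ are involved in a relation of $\mb$; hence $a, b \in \Fitt_1(I)$ and therefore $I \subseteq \Fitt_1(I)$. For the second invariant, $R \subseteq I^{-1}$ gives $I = I \cdot 1 \subseteq II^{-1}$, while $II^{-1} \subseteq R$ holds by the very definition of $I^{-1}$. So both $\Fitt_1(I)$ and $II^{-1}$ are ideals of $R$ sandwiched between $I$ and $R$, and since $I$ is maximal each of them equals $I$ or $R$.

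Finally I would exclude the value $R$ for each. If $II^{-1} = R$, then $I$ is invertible and hence projective of constant rank $1$, which is forbidden. If $\Fitt_1(I) = R$, then since Fitting ideals commute with localization we get $\Fitt_1(I_{\ip}) = R_{\ip}$, i.e. $\mu(I_{\ip}) \le 1$, for every maximal ideal $\ip$; being faithful and cyclic over the local ring $R_{\ip}$, each $I_{\ip}$ is then free of rank $1$, so $I$ is locally free of rank $1$ and thus projective of constant rank $1$, again a contradiction. Hence $\Fitt_1(I) = II^{-1} = I$. The step I expect to be the crux is this last one, namely converting $\Fitt_1(I) = R$ into local cyclicity and combining it with faithfulness to contradict the non-projectivity hypothesis.
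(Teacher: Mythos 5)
Your proof is correct, and its skeleton --- sandwich the invariants between $I$ and $R$, invoke maximality of $I$, and use the non-projectivity hypothesis to exclude the value $R$ --- matches the paper's. The execution differs in two ways, though. First, the paper does not treat $\Fitt_1(I)$ and $II^{-1}$ independently: it uses the chain $I \subseteq II^{-1} \subseteq \Fitt_1(I)$ (the second inclusion being a consequence of Lemma \ref{LemImPhi} with $A = 1_2$), so that only the single possibility $\Fitt_1(I) = R$ needs to be excluded, and your separate invertibility argument for $II^{-1}$ becomes superfluous. Second, where the paper excludes $\Fitt_1(I) = R$ by citing \cite[Propositions 20.7.a and 20.8]{Eis95} (faithfulness gives $\Fitt_0(I) = 0$, and $\Fitt_1(I) = R$ together with $\Fitt_0(I) = 0$ forces $I$ to be projective of constant rank $1$), you re-derive this by localization. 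Your localization argument is sound --- annihilators of finitely generated modules localize, so each $I_{\ip}$ is faithful and cyclic, hence free of rank $1$ --- but its final step, that a finitely generated module which is free of rank $1$ at every maximal ideal is projective of constant rank $1$, is precisely the nontrivial content of the results the paper cites. Over the arbitrary, possibly non-Noetherian, rings of this paper, ``finitely generated and stalkwise free'' does \emph{not} imply projective in general; one also needs the rank function to be locally constant (here it is constant, equal to $1$), and that implication requires a reference (the same propositions of Eisenbud, or Bourbaki). So your variant is valid and more self-contained in appearance, but it does not avoid the external input the paper relies on; it only relocates it into an uncited assertion, which you should support before calling the proof complete.
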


\begin{proof}
Since $I \subseteq II^{-1} \subseteq \Fitt_1(I)$ holds for any two-generated ideal $I$ of $R$, it is sufficient to show that 
$\Fitt_1(I) \subsetneq R$ when $I$ is not projective of constant rank $1$. The latter follows from \cite[Propositions 20.7.a and 20.8]{Eis95}.
\end{proof}

We conclude this section with an example, namely the specialization of \cite[Corollary 1.2]{Mur03} to the case of two variables.

\begin{example}
Let $S = R[x, y]$ be the ring of bivariate polynomials over a ring $R$ and let $I = Sx + Sy$. 
As $x$ and $y$ are regular, Theorem \ref{ThSL2AndFitt1I} applies. It is easily checked that 
$\Fitt_1(I) = I$ so that the determinant map induces an injective map $\V_2(I) / \SL_2(S) \rightarrow R^{\times}$ 
which is clearly surjective. Thus $\V_2(I) / \SL_2(S) \simeq R^{\times}$.
\end{example}

\section{Conditions under which $\V_2(I)/\SL_2(R)$ identifies with $(R/\Fitt_1(I))^{\times}$} \label{SecConditions}

In this section, we investigate conditions under which the induced map of Theorem \ref{ThSL2} is a bijection.
This is certainly the case if the natural group homomorphism $R^{\times} \rightarrow (R/\Fitt_1(I))^{\times}$ is surjective. 
The following lemma derives a less restrictive condition under which the identification $\V_2(I)/\SL_2(R) \simeq (R/\Fitt_1(I))^{\times}$ can also be made.

\begin{lemma} \label{LemSurjectivity}
Let $I$ be a two-generated ideal of $R$. Assume that we can find $(a, b) \in \V_2(I)$ such that the natural group homomorphism 
$$(R/(a: b))^{\times} \rightarrow (R/\Fitt_1(I))^{\times}$$ is surjective 
Then $\det_{(a, b)}: \V_2(I) \rightarrow  (R/\Fitt_1(I))^{\times}$ is surjective.
\end{lemma}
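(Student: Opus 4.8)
The plan is to produce, for each unit $\bar{u} \in (R/\Fitt_1(I))^{\times}$, an explicit preimage in $\V_2(I)$ under $\det_{(a,b)}$ of the simplest possible shape, namely a pair obtained from $(a,b)$ by rescaling its second component.

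First I would record that the natural homomorphism appearing in the hypothesis is well defined because $(a:b) \subseteq \Fitt_1(I)$; indeed, the computation carried out in the proof of Lemma \ref{LemFitt1} gives $\Fitt_1(I) = (a:b) + (b:a)$, so reduction modulo $\Fitt_1(I)$ factors through $R/(a:b)$. Fix $\bar{u} \in (R/\Fitt_1(I))^{\times}$. By the assumed surjectivity of $(R/(a:b))^{\times} \to (R/\Fitt_1(I))^{\times}$, I would choose $v \in R$ whose class in $R/(a:b)$ is a unit and whose class in $R/\Fitt_1(I)$ equals $\bar{u}$.

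The key step is to check that $(a, vb) \in \V_2(I)$. Since $v$ is a unit modulo $(a:b)$, there is $w \in R$ with $vw - 1 \in (a:b)$, i.e.\ $(vw - 1)b \in Ra$; hence $b \in Ra + R(vb)$, which forces $Ra + R(vb) = Ra + Rb = I$. This is exactly the point where the hypothesis phrased in terms of $(a:b)$, rather than merely of $\Fitt_1(I)$, is indispensable: an element that is only a unit modulo $\Fitt_1(I)$ would not in general preserve the property of generating $I$. I expect this verification to be the only genuine obstacle; everything else is a direct application of Lemma \ref{LemDet}.

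It then remains to compute the determinant. Writing $(a, vb) = (a,b)\begin{pmatrix} 1 & 0 \\ 0 & v \end{pmatrix}$ and applying Lemma \ref{LemDet}.$i$ together with $\det_{(a,b)}(a,b) = 1$, I obtain $\det_{(a,b)}(a, vb) = v \cdot 1 = \bar{u}$. As $\bar{u}$ was arbitrary, this shows that $\det_{(a,b)} \colon \V_2(I) \to (R/\Fitt_1(I))^{\times}$ is surjective, completing the proof.
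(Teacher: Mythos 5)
Your proof is correct and takes essentially the same route as the paper's: lift the given unit to an element $v \in R$ that is a unit modulo $(a:b)$, show that $(a, vb)$ still generates $I$, and compute $\det_{(a,b)}(a, vb) = v$ via Lemma \ref{LemDet}.$i$. The only cosmetic difference is that the paper checks the generation step through the $R$-isomorphism $I/Ra \simeq R/(a:b)$, whereas you verify it by hand with an explicit inverse $w$ of $v$ modulo $(a:b)$.
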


\begin{proof}
Let $u \in   (R/\Fitt_1(I))^{\times}$. By hypothesis, we can find $r \in R$ such that $r + \Fitt_1(I) = u$ and $r + (a:b)$ is a unit of $\ovR \Doteq R/(a: b)$.
Let $\varphi: I/Ra \rightarrow \ovR$ be the $R$-isomorphism which maps $b + Ra$ to $1 + (a : b)$.
Since $\varphi(rb + Ra) = r + (a: b)$, we deduce that $I$ is generated by $(a, rb)$. We conclude the proof by observing that $\det_{(a, b)}(a, rb) = r \det_{(a, b)}(a, b) = u$.
\end{proof}

We say that the ring $R$ has property $(*)$ if every surjective ring homomorphism $R \twoheadrightarrow S$ 
induces a surjective group homomorphism $R^{\times} \twoheadrightarrow S^{\times}$.
If there is $(a, b) \in  \V_2(I)$ such that $R/(a: b)$ has $(*)$, then we infer from Lemma \ref{LemSurjectivity} that $\det_{(a, b)}$ is surjective. 
A ring $R$ is called  a \emph{semi-field} if the Krull dimension of $R/\Jac(R)$ is zero where $\mathcal{J}(R)$ denotes the Jacobson radical of $R$. 
Semi-fields enjoy property $(*)$ \cite[Exercise IX.6.16]{LQ15} \cite[Theorem 5.1]{Chen17a}. 
semi-local rings and von Neuman regular rings belong to the class of semi-fields, which is a subclass of the local-global rings, 
see e.g., \cite[Fact IX.6.2]{LQ15} and \cite[Examples 4.1 and 4.2 of Section V.4]{FS01}. 
Local-global rings have stable rank $1$ and it turns out that this property fully charaterizes rings with $(*)$:

\begin{proposition}{\cite[Lemma 6.1]{EO67}} \label{PropSurjectionOfUnitGroups}
The following are equivalent:
\begin{itemize}
\item[$(i)$] The ring $R$ has property $(*)$.
\item[$(ii)$] $\sr(R) = 1$.
\end{itemize}
\end{proposition}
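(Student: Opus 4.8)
The plan is to prove both implications directly, by translating between the lifting of units along quotient maps and the defining condition of stable rank one. The key observation to keep in mind is that $\sr(R) = 1$ means precisely that for every $(a, b) \in \V_2(R)$ there is $r \in R$ with $a + rb \in R^{\times}$, since $\V_1(R) = R^{\times}$.

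For $(ii) \Rightarrow (i)$, I would start from a surjective ring homomorphism $\pi : R \twoheadrightarrow S$, identify $S$ with $R/\ia$ where $\ia = \Ker \pi$, and take an arbitrary unit $\overline{u} \in S^{\times}$. Choosing lifts $u, v \in R$ of $\overline{u}$ and $\overline{u}^{-1}$, the congruence $uv \equiv 1$ modulo $\ia$ provides an element $a \in \ia$ with $uv + a = 1$, so that $(u, a) \in \V_2(R)$. Applying $\sr(R) = 1$ to this unimodular pair yields $r \in R$ with $u + ra \in R^{\times}$; since $a \in \ia$, the reduction of $u + ra$ modulo $\ia$ equals $\overline{u}$, which proves that $R^{\times} \to S^{\times}$ is onto.

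For $(i) \Rightarrow (ii)$, I would run essentially the same argument in reverse. Given $(a, b) \in \V_2(R)$, I pass to the quotient $S = R/Rb$; because $Ra + Rb = R$, the image $\overline{a}$ generates the unit ideal of the commutative ring $S$ and is therefore a unit. Property $(*)$ then lifts $\overline{a}$ to some $w \in R^{\times}$, and $w \equiv a$ modulo $Rb$ lets us write $w = a + rb$ for a suitable $r \in R$, exhibiting the element required for $\sr(R) = 1$.

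Neither direction presents a genuine obstacle: the entire content lies in setting up the correct unimodular pair and the correct quotient so that the two notions become interchangeable. The only point demanding slight care is verifying in each case that the lifted unit actually reduces to the prescribed unit downstairs, which holds because the discrepancy lies in the relevant ideal ($\ia$ in the first direction, $Rb$ in the second).
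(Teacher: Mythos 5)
Your proposal is correct and follows essentially the same route as the paper's own proof: for $(i) \Rightarrow (ii)$ you quotient by $Rb$ and lift the unit $\overline{a}$, and for $(ii) \Rightarrow (i)$ you build the unimodular pair $(u, a)$ with $a \in \ia$ from a lift of the inverse and apply stable rank one, exactly as the paper does (your version just spells out the construction of $b \in \ia$ that the paper leaves implicit). No gaps.
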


\begin{proof}
Assume that the natural homomorphism $R^{\times} \rightarrow (R/\ia)^{\times}$ is surjective for every ideal $\ia$ of $R$. Let $(a, b) \in \V_2(R)$. 
Then $a + \ia$ is unit of $R/I$ with $\ia = Rb$. Therefore we can find $u \in R^{\times}$ such that $a + \ia = u + \ia$, i.e., there is $r \in R$ verifying $a + r b = u$. 
As a result $\operatorname{sr}(R) = 1$.
Let us assume now that $\operatorname{sr}(R) = 1$. Let $\ia$ be an ideal of $R$ and let $a + \ia \in (R/\ia)^{\times}$. 
Then we can find $b \in \ia$ such that $(a, b) \in \V_2(R)$. 
Because $R$ is of stable rank $1$, there is $r \in R$ such that $u \Doteq a + rb$ is a unit of $R$.
\end{proof}

We present now several criteria which guarantee that the induced map of Theorem \ref{ThSL2} is a bijection.

\begin{proposition} \label{PropSL2CompleteInvariant}
Let $I$ be a two-generated ideal of $R$ such that the two following hold:
\begin{itemize}
\item $\Fitt_1(I) = II^{-1}$.
\item  There is $(a, b) \in \V_2(I)$ such that the natural group homomorphism $$\left(R/(a: b)\right)^{\times} \rightarrow (R/\Fitt_1(I))^{\times}$$ is surjective.
\end{itemize}
Then $\det_{(a,b)}$ induces a bijection from $\V_2(I)/\SL_2(R)$ onto $(R/\Fitt_1(I))^{\times}$.
\end{proposition}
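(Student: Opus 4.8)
The plan is to combine the two named results already in the excerpt: Theorem \ref{ThSL2AndFitt1I} supplies injectivity and Lemma \ref{LemSurjectivity} supplies surjectivity of the relevant map, so the proposition should be essentially a formal assembly of these two ingredients. First I would invoke Theorem \ref{ThSL2AndFitt1I}, whose hypothesis $\Fitt_1(I) = II^{-1}$ is exactly the first bullet; this gives that the determinant map induces a \emph{well-defined injection} $\V_2(I)/\SL_2(R) \hookrightarrow (R/\Fitt_1(I))^{\times}$. Concretely, fixing $\mb = (a,b)$ as the reference generating pair, Lemma \ref{LemDet}.$(i)$--$(ii)$ shows $\det_{(a,b)}$ descends to the quotient by $\SL_2(R)$ (since $\det(\sigma) = 1$ for $\sigma \in \SL_2(R)$), and Theorem \ref{ThSL2AndFitt1I} says two pairs with the same image are $\SL_2(R)$-equivalent, which is injectivity.

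Next I would establish surjectivity. This is where the second bullet enters: it is verbatim the hypothesis of Lemma \ref{LemSurjectivity}, which concludes that $\det_{(a,b)}: \V_2(I) \rightarrow (R/\Fitt_1(I))^{\times}$ is surjective. Since surjectivity of $\det_{(a,b)}$ on $\V_2(I)$ immediately passes to the quotient $\V_2(I)/\SL_2(R)$, the induced map is onto. Combining the injection from the first step with this surjection yields the desired bijection.

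The one point requiring a word of care is that the reference pair $(a,b)$ chosen to witness surjectivity in Lemma \ref{LemSurjectivity} is the \emph{same} pair used to define $\det_{(a,b)}$ in the statement of the proposition, so there is no compatibility issue; moreover, the remark after the definition of $\det_{\mb}$ guarantees that the \emph{image} of $\V_2(I)$ under $\det_{\mb}$ — a subgroup of $(R/\Fitt_1(I))^{\times}$ — does not depend on the choice of base pair, so the bijection statement is independent of that choice even though I pin it to $(a,b)$ for concreteness. I do not expect a genuine obstacle here: both hard lemmas are already proved, and the proof is a two-line citation. If anything, the only thing to verify is that the target is really the full unit group $(R/\Fitt_1(I))^{\times}$ and not merely the subgroup $\det_{\mb}(\V_2(I))$ — but Lemma \ref{LemSurjectivity} asserts exactly that these coincide under the stated hypothesis, so the image is all of $(R/\Fitt_1(I))^{\times}$.

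\begin{proof}
By Lemma \ref{LemFitt1} and the first hypothesis, the assumptions of Theorem \ref{ThSL2AndFitt1I} are met, so the determinant map induces an injection from $\V_2(I)/\SL_2(R)$ into $(R/\Fitt_1(I))^{\times}$; fixing the reference pair $(a,b)$, this injection is the map induced by $\det_{(a,b)}$. The second hypothesis is precisely the assumption of Lemma \ref{LemSurjectivity}, whence $\det_{(a,b)}: \V_2(I) \rightarrow (R/\Fitt_1(I))^{\times}$ is surjective; this surjectivity descends to $\V_2(I)/\SL_2(R)$. Therefore $\det_{(a,b)}$ induces a bijection from $\V_2(I)/\SL_2(R)$ onto $(R/\Fitt_1(I))^{\times}$.
\end{proof}
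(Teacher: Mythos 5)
Your proof is correct and follows exactly the paper's own argument, which simply combines Theorem \ref{ThSL2AndFitt1I} (injectivity, using the first hypothesis) with Lemma \ref{LemSurjectivity} (surjectivity, using the second). The only cosmetic remark: citing Lemma \ref{LemFitt1} is superfluous, since the first bullet $\Fitt_1(I) = II^{-1}$ is itself the hypothesis of Theorem \ref{ThSL2AndFitt1I}; that lemma is only needed when one wants to verify this hypothesis for ideals generated by two regular elements.
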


\begin{proof}
Combine Theorem \ref{ThSL2AndFitt1I} and Lemma \ref{LemSurjectivity}.
\end{proof}

We say that a ring $R$ is \emph{almost of stable rank $1$} if $R/Rr$ has stable rank $1$ for every regular element $r \in R$.
This definition adapts Mac Govern's definition \cite[Section 4]{McGov08} to allow the existence of zero divisors in $R$.
A ring of Krull dimension at most $1$ is almost of stable rank $1$. 
More generally, almost local-global rings in the sense of Couchot \cite{Cou07} are almost of stable rank $1$.

\begin{corollary} \label{CorCompleteInvariant1}
Let $R$ be a ring which is almost of stable rank $1$. 
Let $I$ be an ideal generated by two regular elements. 
Then the determinant map induces a bijection from $\V_2(I)/\SL_2(R)$ onto $\V_1(\det(I)) \simeq (R/\Fitt_1(I))^{\times}$.
\end{corollary}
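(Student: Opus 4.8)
The plan is to deduce the corollary from Proposition \ref{PropSL2CompleteInvariant} by exhibiting a generating pair of $I$ that satisfies its two hypotheses. By assumption there is a pair $(a,b) \in \V_2(I)$ with $a$ and $b$ both regular, and Lemma \ref{LemFitt1} then gives $\Fitt_1(I) = II^{-1}$, so the first hypothesis of that proposition is automatic. The whole matter thus reduces to verifying the second hypothesis for this pair, namely that the natural group homomorphism $(R/(a:b))^{\times} \to (R/\Fitt_1(I))^{\times}$ is surjective.

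The key observation I would record first is the chain of inclusions $Ra \subseteq (a:b) \subseteq \Fitt_1(I)$. The right-hand inclusion is immediate from the computation carried out in the proof of Lemma \ref{LemFitt1}, where $\Fitt_1(I) = (a:b) + (b:a)$. The left-hand inclusion holds because $ab \in Ra$, so that $a \in (a:b)$ and hence $Ra \subseteq (a:b)$. Consequently $R/(a:b)$ is a quotient of $R/Ra$, and $R/\Fitt_1(I)$ is in turn a quotient of $R/(a:b)$.

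Next I would bring in the hypothesis that $R$ is almost of stable rank $1$. Since $a$ is regular, $R/Ra$ has stable rank $1$; as the stable rank does not increase under surjective ring homomorphisms, the quotient $R/(a:b)$ also satisfies $\sr(R/(a:b)) = 1$. By Proposition \ref{PropSurjectionOfUnitGroups} this means $R/(a:b)$ has property $(*)$, and applying property $(*)$ to the surjection $R/(a:b) \twoheadrightarrow R/\Fitt_1(I)$ shows that $(R/(a:b))^{\times} \to (R/\Fitt_1(I))^{\times}$ is surjective. Both hypotheses of Proposition \ref{PropSL2CompleteInvariant} now hold for $(a,b)$, whence $\det_{(a,b)}$ induces a bijection from $\V_2(I)/\SL_2(R)$ onto $(R/\Fitt_1(I))^{\times}$; transporting this along the canonical isomorphism $\V_1(\det(I)) \simeq (R/\Fitt_1(I))^{\times}$ gives the claimed bijection.

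I expect the only point requiring care to be the monotonicity $\sr(R/(a:b)) \le \sr(R/Ra)$ of the stable rank under quotients, since this is the step that lets the ``almost of stable rank $1$'' hypothesis act on $(a:b)$ rather than on a principal ideal; this is a standard property of the stable range condition (which visibly descends to quotients), and I would either cite it or give the one-line verification. Beyond that, the argument is essentially bookkeeping: the two inclusions are immediate, and the genuinely useful idea is simply that the colon ideal $(a:b)$ is sandwiched between the regular principal ideal $Ra$ and $\Fitt_1(I)$, which is exactly what makes the regularity of $a$ relevant.
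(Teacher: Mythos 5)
Your proof is correct and follows essentially the same route as the paper's: both reduce to checking the two hypotheses of Proposition \ref{PropSL2CompleteInvariant}, using Lemma \ref{LemFitt1} for the first and property $(*)$ of $R/(a:b)$ (via Proposition \ref{PropSurjectionOfUnitGroups}) for the second. The only difference is that you spell out the justification the paper compresses into ``by assumption, the ring $R/(a:b)$ has stable rank $1$'' --- namely the sandwich $Ra \subseteq (a:b) \subseteq \Fitt_1(I)$ together with the descent of stable rank $1$ to quotients --- which is a welcome clarification rather than a deviation.
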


\begin{proof}
It suffices to check that the two conditions of Proposition \ref{PropSL2CompleteInvariant} hold true.
By Lemma \ref{LemFitt1}, we have $\Fitt_1(I) = II^{-1}$. 
By assumption, the ring $R/(a: b)$ has stable rank $1$ and thus enjoys property $(*)$. As a result, the natural map 
$\left(R/(a: b)\right)^{\times} \rightarrow (R/\Fitt_1(I))^{\times}$ is surjective, which completes the proof.
\end{proof}

\begin{corollary} \label{CorFreeModule}
Let $R$ be an integral domain which is almost of stable rank $1$. 
Assume moreover that $R$ is a free module of rank $2$ over a K-Hermit subring $S$.
Let $I$ be a two-generated ideal of $R$. Then $\SL_n(R)$ acts transitively on $\V_n(I)$ for every $n > 2$.
\end{corollary}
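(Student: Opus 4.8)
The plan is to combine the reduction afforded by Lemma \ref{FreeOverKHermite} with the classification of $\SL_2(R)$-orbits from Corollary \ref{CorCompleteInvariant1}, the only genuine work being a stabilization argument that makes the determinant invariant collapse once a zero coordinate is available. First I would dispose of the trivial case: if $I = 0$ then $\V_n(I)$ is a single point and there is nothing to prove. So assume $I \neq 0$. Since $R$ is a domain, every nonzero element is regular, and a nonzero two-generated ideal is generated by two regular elements (if $I = Rc$ is principal with $c \neq 0$, use the pair $(c,c)$). Hence Lemma \ref{LemFitt1} gives $\Fitt_1(I) = II^{-1}$ and Corollary \ref{CorCompleteInvariant1} applies. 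Because $R$ is a free module of rank $k = 2$ over the K-Hermite ring $S$ and $\mu(I) \le 2$, Lemma \ref{FreeOverKHermite} shows that every $\mb \in \V_n(I)$ with $n > 2$ is $\SL_n(R)$-equivalent to $(\ub, 0, \dots, 0)$ for some $\ub \in \V_2(I)$. It therefore suffices to prove that all vectors of the form $(\ub, 0, \dots, 0)$, with $\ub \in \V_2(I)$, lie in a single $\SL_n(R)$-orbit.

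Next I would fix once and for all a pair $(a, b) \in \V_2(I)$ of regular generators. Since $a \in (a:b)$ and $a$ is regular, $R/(a:b)$ is a quotient of $R/Ra$, which has stable rank $1$ by hypothesis; thus $R/(a:b)$ has stable rank $1$, so it enjoys property $(*)$ and the proof of Lemma \ref{LemSurjectivity} yields, for any $\ub' \in \V_2(I)$, an element $r \in R$ which is a unit modulo $(a:b)$ and satisfies $(a, rb) \in \V_2(I)$ with $\det_{(a,b)}(a, rb) = \det_{(a,b)}(\ub')$. By Lemma \ref{LemDet}$(ii)$ this forces $\det_{(a, rb)}(\ub') = 1$, whence $\ub' \sim_{\SL_2(R)} (a, rb)$ by Theorem \ref{ThSL2AndFitt1I}. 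Embedding the corresponding $\SL_2(R)$-matrix as $\diag(\cdot, 1, \dots, 1)$ gives $(\ub', 0, \dots, 0) \sim_{\SL_n(R)} ((a, rb), 0, \dots, 0)$, so only the connection of $((a, rb), 0, \dots, 0)$ to $((a, b), 0, \dots, 0)$ remains.

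This last step is the crux, and the one place where passing from $\SL_2$ to $\SL_n$ with $n \ge 3$ is essential: the class $r + \Fitt_1(I) \in (R/\Fitt_1(I))^{\times}$ that separates the two pairs in $\V_2(I)/\SL_2(R)$ must be killed using the extra coordinate. Since $r$ is a unit modulo $(a:b)$, I would choose $r' \in R$ with $rr' - 1 \in (a:b)$ and write $(rr' - 1)b = \mu a$; setting $\nu = 1 - rr'$ produces a syzygy $\mu a + \nu b = 0$ together with the Bézout-type relation $rr' + \nu = 1$. One then checks that
$$
M = \begin{pmatrix} 1 & 0 & \mu \\ 0 & r & \nu \\ 0 & -1 & r' \end{pmatrix}
$$
has determinant $rr' + \nu = 1$ and satisfies $(a, b, 0)M = (a, rb, 0)$, so that $M \in \SL_3(R)$; padding $M$ with an identity block places it in $\SL_n(R)$ and gives $((a,b), 0, \dots, 0) \sim_{\SL_n(R)} ((a, rb), 0, \dots, 0)$.

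Chaining these equivalences shows that every $(\ub', 0, \dots, 0)$ is $\SL_n(R)$-equivalent to $((a,b), 0, \dots, 0)$, which together with the first reduction proves transitivity. The main difficulty is precisely the construction of $M$: a naive block-diagonal matrix realizing $\ub' = \ub A$ only has determinant $\det(A) \equiv r \pmod{\Fitt_1(I)}$, and it is the compensating off-diagonal entries built from the syzygy $(\mu, \nu)$ that allow the determinant to be brought down to exactly $1$ while preserving the action on $(a,b,0)$. Everything else is a matter of assembling the already-established orbit classification and the K-Hermite reduction.
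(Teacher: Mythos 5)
Your proof is correct and follows essentially the same route as the paper's: reduce via Lemma \ref{FreeOverKHermite} to $(\ub', 0, \dots, 0)$, use Lemma \ref{LemSurjectivity} and its proof together with Theorem \ref{ThSL2AndFitt1I} to reach $(a, rb, 0, \dots, 0)$, then exploit the spare zero coordinate to eliminate $r$. The only difference is cosmetic: where the paper performs this last step by the chain of elementary operations $(a, rb, 0) \sim_{\E_n(R)} (a, rb, b) \sim_{\E_n(R)} (a, b, 0)$, you package the same stabilization idea into a single explicit determinant-$1$ matrix built from the syzygy $\mu a + \nu b = 0$.
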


\begin{proof}
We fix $(a, b)  \in \V_2(I)$ and consider an arbitrary generating vector $\mb \in \V_n(I)$ for $n > 2$.
As $R$ is a free $S$-module of rank $2$, Lemma \ref{FreeOverKHermite} applies. Hence we can find $\sigma \in \SL_n(R)$ such that  
$\mb \sigma = (\mb', 0, \dots, 0)$ for some $\mb' \in \V_2(I)$. By Lemma  \ref{LemSurjectivity} and its proof, we can find $\sigma' \in \SL_2(R)$ and $r \in R$ such that 
$\mb' \sigma' = (a, rb)$. Thus we have $(\mb', 0, \dots, 0) \sim_{\SL_n(R)} (a, rb, 0, \dots, 0) \sim_{\E_n(R)} (a, rb, b, 0, \dots, 0) \sim_{\E_n(R)} (a, b, 0, \dots, 0)$. 
Therefore $\mb \sim_{\SL_n(R)} (a, b, 0, \dots, 0)$.
\end{proof}

We denote by $\Min(R)$ the \emph{minimal prime spectrum} of $R$, that is, the set of minimal prime ideals of $R$. It is well-known that a 
Noetherian ring has a finite minimal prime spectrum \cite[Theorem 3.1]{Eis95} and this holds more generally for rings with the ascending condition on radical ideals. 
Let $R$ be a reduced ring with minimal prime spectrum. Let $I$ be a faithful ideal of $R$ generated by $\mu$ elements.
We shall establish that $I$ can be generated by $\mu$ regular elements. Combining this fact with Corollary \ref{CorCompleteInvariant1} above, we obtain:

\begin{corollary} \label{CorCompleteInvariant2}
Assume that $\Min(R)$ is finite and that $R$ is reduced and almost of stable rank $1$. Let $I$ be a two-generated faithful ideal of $R$. 
Then the determinant map induces a bijection from $\V_2(I)/\SL_2(R)$ onto $\V_1(\det(I)) \simeq (R/\Fitt_1(I))^{\times}$.
\end{corollary}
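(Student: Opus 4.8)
The plan is to reduce the statement to Corollary \ref{CorCompleteInvariant1}, whose only missing hypothesis is that $I$ be generated by two \emph{regular} elements; so the whole task is to replace an arbitrary generating pair of $I$ by a regular one. Write $\Min(R) = \{\ip_1, \dots, \ip_k\}$, a finite set by hypothesis. Since $R$ is reduced, $\bigcap_{i} \ip_i$ is the nilradical, hence $0$. Using that minimal primes are pairwise incomparable, I would first record the standard description of the zero-divisors: for each $j$ one can pick $b_j \in \bigcap_{i \neq j}\ip_i \setminus \ip_j$, which is nonzero; then $a \in \ip_j$ gives $a b_j \in \bigcap_i \ip_i = 0$, while conversely any $a$ with $ab = 0$ for some $b \neq 0$ lies in some $\ip_j$ (choose $j$ with $b \notin \ip_j$, which exists as $\bigcap_i \ip_i = 0$, and use primeness). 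Thus the set of zero-divisors of $R$ is exactly $\bigcup_i \ip_i$.

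Next I would use faithfulness to locate $I$ relative to the minimal primes: if $I \subseteq \ip_j$ for some $j$, then $b_j I \subseteq \bigcap_i \ip_i = 0$ with $b_j \neq 0$ contradicts $\ann(I) = 0$, so $I \not\subseteq \ip_i$ for every $i$. The heart of the argument is then a generator-regularization step based on prime avoidance for cosets (the standard extension of the prime avoidance lemma: a coset $x + J$ of an ideal contained in a finite union of primes is contained in one of them). Writing $I = (a_1, a_2)$, the coset $a_1 + Ra_2$ is not contained in any $\ip_i$, since that would force $I \subseteq \ip_i$; hence there is $c \in R$ with $a_1' \Doteq a_1 + c a_2 \notin \bigcup_i \ip_i$, i.e. $a_1'$ regular, and $(a_1', a_2)$ still generates $I$. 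Because $a_1'$ now avoids every $\ip_i$, the coset $a_2 + Ra_1'$ is likewise contained in no $\ip_i$, so a second application yields $c' \in R$ with $a_2' \Doteq a_2 + c' a_1'$ regular; the pair $(a_1', a_2') \in \V_2(I)$ then consists of two regular elements. (For the general $\mu$-generated statement the same two moves, applied once to pull one generator out of $\bigcup_i \ip_i$ and then once to each remaining generator using the regular one, do the job.) I expect this coset-avoidance step to be the main obstacle, precisely because it must produce the combining coefficients inside $R$ for residue fields of arbitrary, possibly finite, cardinality, where a naive generic-combination argument breaks down.

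Finally, having exhibited $(a_1', a_2') \in \V_2(I)$ with both entries regular, and since $R$ is almost of stable rank $1$ by hypothesis, Corollary \ref{CorCompleteInvariant1} applies directly and shows that the determinant map induces a bijection from $\V_2(I)/\SL_2(R)$ onto $\V_1(\det(I)) \simeq (R/\Fitt_1(I))^{\times}$, as claimed.
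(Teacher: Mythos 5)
Your proof is correct, and its skeleton is the same as the paper's: replace a generating pair of $I$ by a pair of \emph{regular} generators, then quote Corollary \ref{CorCompleteInvariant1}. Where the two arguments genuinely differ is in how that regularization is carried out. You invoke the coset refinement of prime avoidance (Davis' lemma: if a coset $x + J$ lies in a finite union of \emph{prime} ideals, it lies in one of them) twice — once to move $a_1$ off $\bigcup_{\ip \in \Min(R)} \ip$, once for $a_2$ — which is legitimate here since all the avoided ideals are minimal primes, and primeness of every ideal in the union is exactly the hypothesis this refinement cannot do without. The paper instead proves the regularization by hand in Lemma \ref{LemmaReduced}.$iv$: for each $\ip \in \Min(R)$ it picks a nonzero $r_{\ip} \in \bigcap_{\iq \in \Min(R),\, \iq \neq \ip} \iq$, uses faithfulness of $I$ to find a generator $r_1 \notin \ip$, and replaces every generator $r_i \in \ip$ by $r_i + r_{\ip} r_1$; since $r_{\ip}$ lies in all the \emph{other} minimal primes, these corrections are invisible modulo the primes already treated, so one can sweep through $\Min(R)$ one prime at a time. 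Your instinct that the coset-avoidance step is the crux is exactly right: it is the entire content of the paper's Lemma \ref{LemmaReduced}.$iv$, and your citation of Davis' lemma replaces its proof. What each buys: your route is shorter and modular, at the cost of appealing to a less commonly quoted lemma; the paper's route is self-contained, is stated for $n$-generated faithful ideals, and records the stronger conclusion that the regular generating vector is $\E_n(R)$-equivalent to the original one (your two moves are elementary transvections, so you would also get this for free, though for the corollary mere existence of a regular generating pair suffices).
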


The following lemma resolves our debt with the proof of Corollary \ref{CorCompleteInvariant2}. 

\begin{lemma} \label{LemmaReduced}
Let $R$ be a reduced ring.
Then the following hold.
\begin{itemize}
\item[$(i)$] The set of zero divisors of $R$ is the union of the minimal prime ideals of $R$.
\item[$(ii)$] The annihilator of an ideal of $R$ is the intersection of the minimal prime ideals that do not contain it.
\item[$(iii)$] If $R$ is Noetherian then the set $\Ass(R)$ of associated primes of $R$ 
and the set $\Min(R)$ of minimal prime ideals of $R$ are finite and coincide.
\item[$(iv)$] If $\Min(R)$ is finite and $I$ is a faithful ideal generated by $\mb \in \V_n(I)$, then there is $\mb' \in \V_n(I)$, 
every component of which is regular, and such that $\mb \sim_{\E_n(R)} \mb'$.
\end{itemize}
\end{lemma}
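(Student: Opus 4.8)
The plan is to prove the four assertions essentially in the order stated, since each later part leans on the earlier ones. For $(i)$, I would recall that in any reduced ring the nilradical is zero, so the intersection of all minimal primes is $(0)$. If $x$ is a zero divisor, say $xy = 0$ with $y \neq 0$, then $y$ avoids at least one minimal prime $\ip$ (else $y$ would lie in $\bigcap \Min(R) = (0)$), and from $xy = 0 \in \ip$ together with $y \notin \ip$ we get $x \in \ip$. Conversely, if $x$ lies in a minimal prime $\ip$, the standard localization argument shows $x$ is a zero divisor: $R_{\ip}$ is a reduced ring whose unique prime is $\ip R_{\ip}$, hence a field, so $x/1 = 0$ in $R_{\ip}$, which means $sx = 0$ for some $s \notin \ip$, and $s \neq 0$ since $0 \in \ip$. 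For $(ii)$, the annihilator $\ann(I)$ consists of the $r$ with $rI = 0$; I would show $\ann(I) = \bigcap_{\ip \not\supseteq I} \ip$ by checking both inclusions against the description in $(i)$, using that $r \in \ann(I)$ forces $r$ into every minimal prime not containing $I$ (pick a generator of $I$ outside such a $\ip$ and argue as above), while the reverse inclusion follows because an element in all such primes annihilates each generator of $I$ modulo $\bigcap \Min(R) = (0)$.

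For $(iii)$, I would invoke the structure of associated primes in a Noetherian reduced ring: the associated primes of $R$ are exactly the primes maximal among annihilators of nonzero elements, and by $(i)$ the set of zero divisors $\bigcup \Min(R)$ equals $\bigcup \Ass(R)$; finiteness of $\Ass(R)$ is standard Noetherian theory \cite[Theorem 3.1]{Eis95}, and the inclusion $\Min(R) \subseteq \Ass(R)$ together with the fact that reducedness forces every associated prime to be minimal (an embedded prime would produce a nonzero nilpotent in the localization) gives the equality.

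The genuinely substantive part is $(iv)$, which is also where I expect the main difficulty. The goal is a prime-avoidance argument made effective by elementary operations. Since $\Min(R)$ is finite, write $\Min(R) = \{\ip_1, \dots, \ip_t\}$; by $(i)$ the regular elements are exactly those lying in none of the $\ip_j$. Given $\mb = (m_1, \dots, m_n) \in \V_n(I)$ with $I$ faithful, faithfulness together with $(ii)$ forces $I \not\subseteq \ip_j$ for every $j$, so for each $j$ some component of $\mb$ avoids $\ip_j$. The plan is to repair the components one at a time: to make $m_1$ regular, I would add to it a suitable $R$-combination $\sum_{k \geq 2} c_k m_k$ of the other components — an elementary operation — chosen so that the result avoids every $\ip_j$ simultaneously. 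The obstacle is precisely the classical one in prime avoidance with finitely many primes: for each $j$ I need the adjusted first entry outside $\ip_j$, and a single additive correction must work for all $j$ at once. I would handle this by the usual inductive prime-avoidance construction (or, cleanly, by passing to the semilocal ring $R/\bigcap_j \ip_j \cong \prod_j R/\ip_j$ via the Chinese Remainder Theorem, where $I$ maps onto a unimodular row in each factor field and a single elementary correction visibly exists, then lifting). Applying this correction is an $\E_n(R)$-move; iterating over the components, and noting that once an early component is regular it can be used to regularize the later ones without destroying the earlier regularity (regularity is an open condition, stable under the finitely many avoidance constraints), yields $\mb' \in \V_n(I)$ with all components regular and $\mb \sim_{\E_n(R)} \mb'$.
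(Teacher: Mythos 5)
Your main line of argument is sound, and for parts (i)--(iii) it simply fills in what the paper handles by citation: the paper quotes Matsumura for (i), calls (ii) routine, and gets (iii) from ``zero divisors $=$ union of associated primes'' plus prime avoidance; your direct arguments for these are correct. For (iv) you take a genuinely different (but valid) route from the paper. The paper iterates over the \emph{primes} rather than over the \emph{components}: it first shows that no proper subfamily of $\Min(R)$ has zero intersection (its set $E$ equals $\Min(R)$), picks for each $\ip$ a nonzero $r_{\ip} \in \bigcap_{\iq \neq \ip} \iq$, notes that faithfulness forces some generator $r_1 \notin \ip$ (otherwise $r_{\ip} I = 0$), and then replaces every generator lying in $\ip$ by $r_i + r_{\ip} r_1$. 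The virtue of the explicit correction $r_{\ip} r_1$ is that it lies in \emph{every other} minimal prime, so the repair at $\ip$ cannot disturb memberships at the other primes; this makes the iteration over $\Min(R)$ work using only the elementary union form of prime avoidance. Your version --- repair one component at a time so that it avoids all primes simultaneously --- instead requires the \emph{coset} form of prime avoidance (there is $c$ in the ideal generated by the other components with $m_1 + c$ outside every $\ip_j$), which is a standard lemma whose usual inductive proof is exactly the construction you allude to. Both routes work; the paper's is self-contained, yours outsources the combinatorial step. One point you gloss over: the claim that faithfulness plus (ii) forces $I \not\subseteq \ip_j$ tacitly uses that the intersection of any proper subfamily of $\Min(R)$ is nonzero (equivalently, no minimal prime contains the intersection of the others); this needs to be said, and it is precisely what the paper's construction of $E$ and the elements $r_{\ip}$ makes explicit.

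Your parenthetical ``clean'' alternative, however, is wrong and should be deleted. The minimal primes of a reduced ring are in general not pairwise comaximal, so $R/\bigcap_j \ip_j$ (which is just $R$, since $R$ is reduced) is \emph{not} isomorphic to $\prod_j R/\ip_j$ by the Chinese Remainder Theorem: for $R = K[x,y]/(xy)$ the minimal primes are $(\overline{x})$ and $(\overline{y})$, and their sum is the maximal ideal $(\overline{x},\overline{y}) \neq R$. Moreover each $R/\ip_j$ is a domain, not a field, and since the embedding $R \hookrightarrow \prod_j R/\ip_j$ need not be surjective, an ``elementary correction'' found in the product cannot in general be lifted back to $R$. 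The inductive prime-avoidance argument you give as your primary route is the one that works; the CRT shortcut does not.
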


\begin{proof}
$(i)$. See for instance \cite[Proposition 1.1]{Mat83}.
$(ii)$. This is routine.
$(iii)$. As the set of zero divisors of $R$ is the union of the associated primes of $R$ \cite[Theorem 3.1.b]{Eis95}, 
the result follows from $(i)$ together with prime avoidance \cite[Lemma 3.3]{Eis95}.
$(iv)$. Let $E$ be a subset of $\Min(R)$ such that 
$\bigcap_{\ip \in E} \ip = \{ 0 \}$ and $\bigcap_{\ip \in F} \ip \neq  0$ for every proper subset $F \subsetneq E$.
For each $\ip$ in $E$, we pick 
$r_{\ip} \in \left(\bigcap_{\iq \in E, \iq \neq \ip} \iq \right) \setminus \{ 0 \}$.
If $r$ is a regular element, then $r r_{\ip} \neq 0$ for every 
$\ip \in E$. Hence $r$ does not belong to any $\ip$ in $E$. 
Thus $\bigcup_{\ip \in E} \ip$ is the set of zero-divisors of $R$ and it follows from $(i)$ and prime avoidance \cite[Lemma 3.3]{Eis95} that $E = \Min(R)$.
Let $I$ be a faithful ideal of $R$ that is generated by $n$ elements, say $r_1, \dots, r_n$. Let $\ip \in \Min(R)$. 
Then at least one generator, say $r_1$, does not belong to $\ip$, since otherwise $r_{\ip}I = 0$ would hold with $r_{\ip}$ as above.
 Replacing each $r_i$ that lies in $\ip$ by $r_i + r_{\ip}r_1$ yields a new generating vector with no component in $\ip$. 
 By iterating this process over $\Min(R)$, we eventually obtain a vector of $n$ regular generators of $I$.
\end{proof}

We conclude this section with two examples of Bass rings for which 
the determinant map provides a particularly simple description of $\V_2(I)/\SL_2(R)$ for a faithful ideal $I$ of $R$. 

\begin{proposition} \label{PropFitt1Examples}
\begin{itemize}
\item[$(i)$] 
Let $R$ be the integral group ring of a cyclic group $\langle c \rangle$ with $p$ elements, $p > 0$ a prime number. 
Let $I$ be a faithful ideal of $R$ which is not invertible. Then $\Fitt_1(I) = R(c - 1) + Rp$ and $\V_2(I) / \SL_2(R) \simeq (\Z/p\Z)^{\times}$.
\item[$(ii)$]
Let $K$ be field of characteristic distinct from $2$. Let $$R = K[x, y] / K[x, y](y^2 - x^2(x + 1)),$$ i.e.,  
the coordinate ring of the nodal plane cubic curve $y^2 = x^2(x + 1)$ over $K$. 
Let $I$ be a non-zero ideal of $R$ which is not invertible. 
Then $\Fitt_1(I) = R\overline{x} + R\overline{y}$  and $\V_2(I)/\SL_2(R) \simeq K^{\times}$ 
where $\overline{x}$ and $\overline{y}$ are the images of $x$ and $y$ in $R$.
\end{itemize}
\end{proposition}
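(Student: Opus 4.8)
The plan is to derive both isomorphisms from Corollary \ref{CorCompleteInvariant2}, so that the entire problem reduces to computing $\Fitt_1(I)$. First I would verify the hypotheses of that corollary in each case. In case $(i)$, $R = \Z[c]/(c^p - 1)$ is a one-dimensional reduced Noetherian ring (the polynomial $c^p - 1$ is separable over $\Q$), with $\Min(R) = \{(c - 1), (\Phi_p(c))\}$ finite, and being of Krull dimension $1$ it is almost of stable rank $1$. In case $(ii)$, $R$ is a one-dimensional domain, so $\Min(R) = \{(0)\}$ and $R$ is reduced and almost of stable rank $1$. In both cases $R$ is a Bass ring, so $I$ is two-generated, and $I$ is faithful (by hypothesis in $(i)$, and because $R$ is a domain in $(ii)$). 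Hence Corollary \ref{CorCompleteInvariant2} yields $\V_2(I)/\SL_2(R) \simeq (R/\Fitt_1(I))^{\times}$. Granting the claimed values of $\Fitt_1(I)$, the unit groups follow immediately: $R/(R(c - 1) + Rp) \simeq \mathbb{F}_p$ in case $(i)$ and $R/(R\overline{x} + R\overline{y}) \simeq K$ in case $(ii)$, whence $(\Z/p\Z)^{\times}$ and $K^{\times}$ respectively.

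It remains to prove $\Fitt_1(I) = \im$, where $\im$ denotes $R(c - 1) + Rp$ in case $(i)$ and $R\overline{x} + R\overline{y}$ in case $(ii)$. In each case $\im$ is the unique singular maximal ideal of $R$: the normalization is $\Rt = \Z \times \Z[\zeta_p]$ (resp. $\Rt = K[t]$ with $\overline{x} = t^2 - 1$ and $\overline{y} = t(t^2 - 1)$), the conductor satisfies $(R : \Rt) = \im$, and $R/\im \simeq \mathbb{F}_p$ (resp. $K$). For the inclusion $\Fitt_1(I) \subseteq \im$ I would use that $\Fitt_1$ commutes with localization together with the fact that $R_{\ip} = \Rt_{\ip}$ is regular for every $\ip \neq \im$, since the conductor is $\im$; there a faithful ideal is principal, so $\Fitt_1(I)_{\ip} = \Fitt_1(I_{\ip}) = R_{\ip}$. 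Thus $\im$ is the only maximal ideal that can contain $\Fitt_1(I)$, and because $I$ is not invertible we have $\Fitt_1(I) \neq R$ (the faithfulness of $I$ forces $\Fitt_0(I) = 0$ by \cite[Proposition 20.7]{Eis95}, so invertibility is equivalent to $\Fitt_1(I) = R$). Hence $\Fitt_1(I) \subseteq \im$.

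For the reverse inclusion I would first select two regular generators of $I$, which is legitimate by Lemma \ref{LemmaReduced}$(iv)$, so that $\Fitt_1(I) = I I^{-1}$ by Lemma \ref{LemFitt1}. The structural input is that, $R$ being Bass with $\Rt/R$ of length one, the only proper overring of $R$ inside $\Rt$ is $\Rt$ itself \cite[Proposition 2.2.ii]{LW85}; since a non-invertible faithful ideal of a Bass ring cannot have multiplier ring $R$ (such an ideal is invertible over its multiplier ring), we obtain $\varrho(I) = \Rt$. Then $I$ is a $\Rt$-ideal, hence invertible over the normal ring $\Rt$, with $\Rt$-inverse $I^{*} = (\Rt : I)$ satisfying $II^{*} = \Rt$. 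Writing $\ic = (R : \Rt) = \im$, one checks $\ic I^{*} \subseteq I^{-1}$ (because $(\ic I^*)I \subseteq \ic \Rt = \ic \subseteq R$) and $\ic \Rt = \ic$, whence $\ic = \ic(II^{*}) = I(\ic I^{*}) \subseteq I I^{-1}$. Conversely, since $I = \Rt I$, for $x \in I^{-1}$ and $i \in I$ we have $(xi)\Rt = x(i\Rt) \subseteq xI \subseteq R$, so $xi \in \ic$ and $I I^{-1} \subseteq \ic$. Therefore $\Fitt_1(I) = I I^{-1} = \ic = \im$.

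The main obstacle is this reverse inclusion $\im \subseteq \Fitt_1(I)$, that is, the identification $\varrho(I) = \Rt$ together with the ensuing computation $II^{-1} = (R : \Rt)$; this is where the Bass hypothesis is genuinely used, through the absence of an intermediate order and the invertibility of a faithful ideal over its multiplier ring. The remaining verifications — that $\Rt$ is as described, that $(R : \Rt) = \im$, and the elementary module identities $\ic I^{*} \subseteq I^{-1}$, $II^{*} = \Rt$, and $\ic\Rt = \ic$ — are routine and specific to each ring, and I would carry them out explicitly in the two cases.
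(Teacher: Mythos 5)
Your proof is correct, but the core computation of $\Fitt_1(I)$ takes a genuinely different route from the paper's. The paper proves the inclusion $(R:\Rt) \subseteq \Fitt_1(I)$ for an \emph{arbitrary} two-generated ideal by elementary means --- Lemma \ref{LemProductOfDedekindRings} in case $(i)$ (a localization argument reducing to elementary matrices over PIDs) and Lemma \ref{LemPID} in case $(ii)$ --- and then concludes by observing that $(R:\Rt)$ is a \emph{maximal} ideal of $R$, so that non-invertibility of $I$, which gives $\Fitt_1(I) \subsetneq R$ by \cite[Propositions 20.7.a and 20.8]{Eis95} as in Lemma \ref{LemMaximalIdeal}, forces equality. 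You instead use non-invertibility at the outset to pin down $\varrho(I) = \Rt$ --- via the structural theorem that a faithful ideal of a Bass ring is invertible over its ring of multipliers, a fact the paper invokes only in the quadratic-order section as \cite[Proposition 5.8]{LW85} --- and then compute $II^{-1} = (R:\Rt)$ exactly by fractional-ideal manipulations; both arguments pass through $\Fitt_1(I) = II^{-1}$ via Lemma \ref{LemmaReduced}.$iv$ and Lemma \ref{LemFitt1}. What the paper's route buys is self-containedness: its two lemmas need no Bass structure theory, only the happy accident that the conductor is maximal in these two examples. What your route buys is exactness and generality: you obtain $\Fitt_1(I) = II^{-1} = (R:\varrho(I))$ without ever using maximality of the conductor, so your argument would survive in Bass rings where the conductor is not maximal, and it establishes for these examples precisely the identity $\Fitt_1(I) = (R:\varrho(I))$ that the paper singles out in its final remarks and proves separately for quadratic orders (Proposition \ref{PropFitt1OfOrderIdeal}.$i$). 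Two minor points: your second paragraph (the localization bound $\Fitt_1(I) \subseteq \im$) is redundant, since your third paragraph proves both inclusions $II^{-1} = (R:\Rt)$ outright; and the multiplier-ring invertibility theorem is the one nontrivial external input of your argument, so it deserves a precise citation (Bass's ubiquity theorem, or \cite[Proposition 5.8]{LW85}) rather than a passing mention.
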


The proof of Proposition \ref{PropFitt1Examples} relies on the following two lemmas.

\begin{lemma} \label{LemProductOfDedekindRings}
Let $R$ be a reduced ring with finite minimal spectrum. 
Let $$S = \Pi_{\ip \in \Min(R)} R/\ip$$ and assume that each factor $R/\ip$ is a Dedekind ring. 
Let $I$ be a two-generated ideal of $R$.
Then the two following hold:
\begin{itemize}
\item[$(i)$] $\Fitt_0(S/R) \subseteq \Fitt_1(I)$. 
\item[$(ii)$] If $S/R$ is a cyclic $R$-module, then $\Fitt_0(S/R) = (R:S)$
\end{itemize}
\end{lemma}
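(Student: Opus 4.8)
The plan is to dispatch $(ii)$ as a formality and to reduce $(i)$ to a local divisibility computation. For $(ii)$, a cyclic module $M$ has $\mu(M) = 1$, so the standard inclusions $\ann(M)^{\mu(M)} \subseteq \Fitt_0(M) \subseteq \ann(M)$ (\cite[Proposition 20.7]{Eis95}) collapse to $\Fitt_0(S/R) = \ann(S/R)$. Since $\ann(S/R) = \{r \in R : rS \subseteq R\} = (R:S)$, assertion $(ii)$ follows at once.

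For $(i)$, I would first use $\Fitt_0(S/R) \subseteq \ann(S/R) = (R:S)$, so that everything reduces to the substantive containment $(R:S) \subseteq \Fitt_1(I)$. As both sides are ideals, I would verify it after localizing at an arbitrary maximal ideal $\im$ of $R$. Here $\Fitt_1$ commutes with localization by base change, and the annihilator of the finitely generated module $S/R$ does too, so that $(R:S)_{\im} = \ann(S_{\im}/R_{\im}) = (R_{\im} : S_{\im})$. Fixing $(a,b) \in \V_2(I)$, Lemma \ref{LemFittMuMinusOne} gives $\Fitt_1(I) = (a:b) + (b:a)$ (the elements involved in a relation of $(a,b)$ are precisely those of $(b:a)$ together with those of $(a:b)$), and this identity persists after localization. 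Thus it suffices to prove $(R_{\im} : S_{\im}) \subseteq (a:b) + (b:a)$ inside $R_{\im}$.

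The local model is transparent: the minimal primes of $R_{\im}$ are the $\ip R_{\im}$ with $\ip \in \Min(R)$ and $\ip \subseteq \im$, and since $R$ is reduced these meet in $0$, so $R_{\im}$ embeds in $S_{\im} = \prod_{\ip \subseteq \im}(R/\ip)_{\im}$. Each factor $(R/\ip)_{\im}$ is a localization of the Dedekind domain $R/\ip$ at a prime, hence a discrete valuation ring or a field; consequently $S_{\im}$ is a finite product of valuation rings, in each of which any two elements are comparable for divisibility. Given $c \in (R_{\im}:S_{\im})$, I would split it with an idempotent $e = (e_\ip) \in S_{\im}$, declaring $e_\ip = 1$ exactly when $b$ divides $a$ in $(R/\ip)_{\im}$ (equivalently $a/b$ lies there) and $e_\ip = 0$ otherwise. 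Writing $r \Doteq ce$ and $s \Doteq c(1-e)$, both in $R_{\im}$, one has $c = r + s$. The element $\theta \in S_{\im}$ with $\theta_\ip = a/b$ where $e_\ip = 1$ and $\theta_\ip = 0$ elsewhere satisfies $\theta b = ea$, whence $c\theta \in R_{\im}$ and $(c\theta) b = cea = ra$, so $r \in (b:a)$; the mirror construction (using $b/a$ where $e_\ip = 0$) shows $s \in (a:b)$. Hence $c \in (a:b)+(b:a)$, as required.

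Assembling the local statements yields $(R:S) \subseteq \Fitt_1(I)$, and combined with $\Fitt_0(S/R) \subseteq (R:S)$ this proves $(i)$. I expect the main obstacle to be the local computation: the decisive point is that localization collapses $S$ to a finite product of valuation rings, where total divisibility lets the idempotent $e$ break a conductor element into two pieces landing respectively in the colon ideals $(b:a)$ and $(a:b)$ whose sum is $\Fitt_1(I)$. A secondary point requiring care is the commutation of the conductor, the annihilator and the Fitting ideals with localization, which relies on $S/R$ being finitely generated (implicit in invoking $\Fitt_0(S/R)$).
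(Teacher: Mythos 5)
Your proof is correct, and although it shares the paper's local--global skeleton (part $(ii)$ is identical, via \cite[Proposition 20.7]{Eis95}, and both arguments for $(i)$ localize at a maximal ideal so that the factors of $S$ become principal ideal domains, i.e.\ valuation rings), the core mechanism for $(i)$ is genuinely different. The paper first computes $\Fitt_0(S/R) = \sum_{\ip \in \Min(R)} \ip'$, where $\ip'$ denotes the product of the minimal primes distinct from $\ip$, and then proves $\ip' \subseteq \Fitt_1(I)$ one prime at a time: over the local Dedekind factor it straightens the generating pair by an elementary matrix lifted to $R$, obtaining generators $(a', b')$ with $b' \in \ip$, so that $\ip' b' \subseteq \ip' \ip = 0$ and hence $\ip' \subseteq (a' : b') \subseteq \Fitt_1(I)$, using that $\Fitt_1$ is insensitive to the change of generating pair. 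You instead keep the generating pair fixed, replace $\Fitt_0(S/R)$ by the larger ideal $\ann(S/R) = (R:S)$, and split an arbitrary conductor element $c = ce + c(1-e)$ along an idempotent of $S_{\im}$ dictated by divisibility comparisons in each valuation-ring factor, landing the two pieces in $(b:a)$ and $(a:b)$ respectively. Your route buys a strictly stronger conclusion --- $(R:S) \subseteq \Fitt_1(I)$ with no cyclicity hypothesis --- which is precisely the combination of $(i)$ and $(ii)$ the paper actually invokes later (proof of Proposition \ref{PropFitt1Examples}) and matches the observation in the final remarks that $\Fitt_1(I)$ contains the conductor; the paper's route buys an explicit formula for $\Fitt_0(S/R)$ and avoids the idempotent bookkeeping. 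Two small points of care in your write-up, both harmless: the formula $\Fitt_1(I) = (a:b) + (b:a)$ should be justified from the presentation attached to the (possibly non-minimal) generating pair $(a,b)$, exactly as the paper does in Lemma \ref{LemFitt1}; and $\theta_{\ip} = a/b$ needs the convention $\theta_{\ip} = 0$ when $b_{\ip} = 0$ (in which case divisibility forces $a_{\ip} = 0$).
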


\begin{proof}
$(i)$. For $\ip \in \Min(R)$, denote by $\ip'$ the product of the minimal primes of $R$ distinct from $\ip$. 
It is easy to see that $\Fitt_0(S/R) =  \sum_{\ip \in \Min(R)} \ip'$ by considering a presentation of the $R$-module $S/R$ 
whose generators are the obvious idempotents splitting $S$.
Since $\Fitt_i$ commutes with localization, it suffices to show that 
$\Fitt_0(S_{\iq}/R_{\iq}) \subseteq \Fitt_1(I_{\iq})$ for every maximal ideal $\iq$ of $R$. 
Let $\iq$ be such a maximal idea Then either $(R/\ip)_{\iq} = \{0\}$ or $\ip \subseteq \iq$ in which case $(R/\ip)_{\iq}$ is a local Dedekind ring, 
hence a principal ideal domain. Thus we can assume, without loss of generality 
that $S$ is a product of principal ideal domains.
Let $(a, b) \in \V_2(R)$ and $\ip \in \Min(R)$. 
As $R/\ip$ is a principal ideal domain, there is $E \in \E_2(R)$ such that $(a_{\ip}, b_{\ip})E = (c, 0)$ where 
$a_{\ip}$ and $b_{\ip}$ are the components of $a$ and $b$ in $R/\ip$ and $c \in R/\ip$.
Let $(a', b') = (a, b)E$. Then we have $\ip' \subseteq (a': b') \subseteq \Fitt_1(I)$, so that $\Fitt_0(S/R) \subseteq \Fitt_1(I)$.

$(ii)$. This follows from \cite[Proposition 20.7]{Eis95}. 
\end{proof}

The next lemma comes in handy for the ring of Proposition \ref{PropFitt1Examples}.$ii$ but also in Section \ref{SecCurves}.  

\begin{lemma} \label{LemPID}
Let $R, S$ be rings such that $R \subseteq S \subseteq K(R)$ 
and assume that $S$ is a principal ideal domain.
Let $I$ be a two-generated ideal of $R$.
Then $\Fitt_1(I)$ contains $(R: S)$.
\end{lemma}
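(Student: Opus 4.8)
The plan is to reduce the claim to the coefficient description of $\Fitt_1(I)$ and then to exploit that $I$ becomes principal once extended to the principal ideal domain $S$. First note that since $S$ is a principal ideal domain it is a domain, and as $R \subseteq S$ the ring $R$ is a domain as well; consequently $K(R)$ is the field of fractions of $R$ and $S$ is an overring of $R$ inside it. Fix two generators $a, b$ of $I$, so $I = Ra + Rb$. Recall (cf.\ the proof of Lemma \ref{LemFitt1}, via Lemma \ref{LemFittMuMinusOne}) that the coefficients occurring in the relations among $a, b$ all lie in $\Fitt_1(I)$, so that $(a : b) + (b : a) \subseteq \Fitt_1(I)$. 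It therefore suffices to prove the sharper inclusion $(R : S) \subseteq (a : b) + (b : a)$. The case $I = 0$ is immediate (then $\Fitt_1(I) = R$), so I may assume $I \neq 0$.

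Next I would bring in the principal ideal domain structure through the extended ideal $IS = Sa + Sb$. Since $S$ is a principal ideal domain, $IS = dS$ for some nonzero $d \in S$, and as $a, b \in dS$ I may write $a = d a'$ and $b = d b'$ with $a', b' \in S$. Comparing $d(Sa' + Sb') = IS = dS$ and cancelling the nonzero $d$ gives $Sa' + Sb' = S$, so there are B\'ezout coefficients $u, v \in S$ with $ua' + vb' = 1$. The identity that will drive everything is $b'a = a'b$, which holds in $R$ because both sides equal $d a' b'$.

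Finally, let $r \in (R : S)$ and set $r_1 = r u a'$ and $r_2 = r v b'$. Each of $ua'$, $vb'$, $ub'$, $va'$ belongs to $S$, so multiplying it by $r$ produces an element of $R$; in particular $r_1, r_2 \in R$ and $r_1 + r_2 = r(ua' + vb') = r$. Using $b'a = a'b$ one computes $r_1 b = ru(a'b) = ru(b'a) = (rub')a \in Ra$, so $r_1 \in (a : b)$; symmetrically $r_2 a = rv(b'a) = rv(a'b) = (rva')b \in Rb$, so $r_2 \in (b : a)$. Hence $r = r_1 + r_2 \in (a : b) + (b : a) \subseteq \Fitt_1(I)$, as required.

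The only place the hypotheses are genuinely invoked is in keeping these auxiliary products inside $R$: the principal ideal domain assumption furnishes the factorization $a = d a'$, $b = d b'$ and the B\'ezout relation over $S$, while the conductor hypothesis $r \in (R : S)$ is precisely what pulls $rua'$, $rvb'$ and their companions back into $R$. I do not anticipate a real obstacle beyond this bookkeeping; the one point to handle carefully is to group each product so that the factor $r$ always multiplies an element of $S$, which is what legitimises every membership $r_1, r_2 \in R$.
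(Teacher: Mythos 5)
Your proof is correct and follows essentially the same route as the paper's: both arguments reduce the claim to showing $(R:S) \subseteq (a:b) + (b:a) \subseteq \Fitt_1(I)$, use principality of $IS$ in the PID $S$ to produce $S$-comaximal normalized generators, and then pull B\'ezout combinations back into $R$ via the conductor, exploiting the cross identity $b'a = a'b$. The only differences are bookkeeping: the paper normalizes the ideal itself to $I/d'$ (hence needs $\Fitt_1(I/d') = \Fitt_1(I)$) and uses that $(R:S)$ is a principal $S$-ideal, whereas you factor the generators as $a = da'$, $b = db'$ and decompose an arbitrary conductor element $r$ directly, which is marginally more self-contained.
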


\begin{proof}
We can assume that $I \neq \{0\}$ since the result is trivial otherwise. 
By hypothesis, there are $d, d' \in S$ such that  $(R:S) = Sd$ and $SI = Sd'$. 
Let $a$ and $b$ be two generators the fractional $R$-ideal $I/d'$. 
As $Sa + Sb = S$, it follows that $(a: b) + (b: a)$ 
is generated by the elements of the form $sa$ and $sb$ where $s \in S$ is such that $sa$ and $sb$ belong to $R$. 
Since $(R : S) = Sda + Sdb$, we infer that $(R: S) \subseteq \Fitt_1(I/d') = \Fitt_1(I)$.
\end{proof}

The \emph{normalization} $\tilde{R}$ or $R$ is the normal closure of $R$ in $K(R)$.

\begin{sproof}{Proof of Proposition \ref{PropFitt1Examples}}
$(i)$. 
The ring $R$ is a Dedekind-like ring \cite[Theorem 1.2]{Lev85}, hence a Bass ring (alternatively use the characterization \cite[Theorem 2.1.$iii$]{LW85}). Thus $I$ can be generated by two elements.
The normalization $\tilde{R}$ of $R$ identifies with $\Z \times \Z[e^{2  i \pi / p}]$ and $R$ embeds into $\tilde{R}$ through the map 
$r \mapsto (r + R(c - 1), r + R\Phi_p(c))$ where $\Phi_p(x) = 1 + x + \cdots + x^{p - 1}$ is the $p$th cyclotomic polynomial. From now on, we identifies $R$ with its image in $\tilde{R}$. 
By Lemma \ref{LemmaReduced}, the ideal $I$ can be generated by two regular elements. By Lemma \ref{LemFitt1}, we have $\Fitt_1(I) = II^{-1}$.
By Lemma \ref{LemProductOfDedekindRings}, the ideal  $\Fitt_1(I)$ contains $(R : \tilde{R})$, which is the maximal ideal $R(c - 1) + R\Phi_p(c) = R(c - 1) + Rp$. Since $I$ is not invertible, we have  
$\Fitt_1(I) = R(c - 1) + Rp$. Finally, the bijection $\V_2(I)/\SL_2(R) \simeq (\Z/p\Z)^{\times}$ results from Corollary \ref{CorCompleteInvariant2}.

$(ii)$. 
The ring $R$ is Bass ring \cite[Theorem 2.1]{Grei82}, hence $I$ can be generated by two elements.
It is easily checked that the map 
$x \mapsto z^2 - 1, y \mapsto z(z^2 - 1)$ 
induces a ring isomorphism from $R$ onto $K[z^2 -1, z(z^2 - 1)] \subsetneq K[z] \Doteq \tilde{R}$ and with this identification $\tilde{R}$ is the normalization of $R$. 
Since $\tilde{R}$ is a principal ideal ring and $I \neq \{0\}$, 
we have $(R: \tilde{R}) = K[z](z^2 - 1) \subseteq \Fitt_1(I)$ by Lemma \ref{LemPID}. 
As $K[z](z^2 - 1)$ is maximal in $R$ and $I$ is not invertible, we infer that 
$\Fitt_1(I) =  K[z](z^2 - 1) = R(z^2 -1) + Rz(z^2 - 1)$. 
Finally, the bijection $\V_2(I)/\SL_2(R) \simeq K^{\times}$ results from Corollary \ref{CorCompleteInvariant1}.
\end{sproof}

\begin{remark}
\begin{itemize}
\item
With the notation of Proposition \ref{PropFitt1Examples}.$i$, a non-faithful ideal $I$ of $R$ is annihilated by one of the two minimal prime ideals of $R$, say $\ip$. 
Hence such an ideal $I$ identifies with an ideal of $R/\ip \simeq \Z$ or $\Z[e^{2i\pi / p}]$. 
The natural map $\SL_2(R) \rightarrow \SL_2(R/\ip)$ is surjective by Proposition \ref{PropStableRank}.$ii$ and Remark \ref{RemStableRank2}.
It follows that $\SL_2(R)$ acts transitively on $\V_2(I)$. 
\item
For both assertions of Proposition \ref{PropFitt1Examples}, we have actually shown that $\Fitt_1(I) = (R: \tilde{R})$ provided that $I$ is faithful.
\end{itemize}
\end{remark}

\section{The group $\Aut_R(I)$ and the line $\PL(K(R))$} \label{SecP1}

This section is dedicated to the proof of Corollary \ref{CorP1}. 
We proceed by proving first a result valid in any ring, namely Proposition \ref{PropP1} below.
To do so, we consider the diagonal action of the $R$-automorphism group $\Aut_R(I)$ of $I$ from the left:
$$
\phi (a, b) \Doteq (\phi(a), \phi(b)), \, (a,b) \in \V_2(I).
$$
This action clearly commutes with the action of $\GL_2(R)$ from the right.
The multiplication by a unit $u \in \varrho(I)^{\times}$ gives rise to an $R$-automorphism of $I$
so that  $ \varrho(I)^{\times}$ naturally identifies with a subgroup of $\Aut_R(I)$.

We define an action of $\SL_2(R)$ on $\PL(K(R))$ from the right in the following way:
$$
[r:s] \sigma = [ar + cs: br + ds]
$$
with $\, [r:s] \in \PL(K(R))$ and 
$\sigma =  \begin{pmatrix} a & b \\ c & d \end{pmatrix} \in \SL_2(R)$. 

\begin{proposition} \label{PropP1}
Let $R$ be a ring. 
Then $\PL(K(R)) /\SL_2(R)$ is equipotent with the disjoint union
$$
\bigsqcup_{[I] \in \Cl(R), \, \mu(I) \le 2} \varrho(I)^{\times} \backslash \V_2(I) /\SL_2(R) 
$$
\end{proposition}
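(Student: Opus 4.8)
The plan is to construct an explicit bijection between $\PL(K(R))/\SL_2(R)$ and the disjoint union on the right-hand side, by sorting each point of the projective line according to the ideal class it generates. First I would associate to each representative $[r:s] \in \PL(K(R))$ the fractional ideal $I_{[r:s]} \Doteq Rr + Rs$. The key preliminary observation is that the equivalence relation defining $\PL(K(R))$ rescales $(r,s)$ by regular elements $r_0 \vb = s_0 \wb$, which multiplies $Rr + Rs$ by the regular element $r_0/s_0 \in K(R)^{\times}$; hence the ideal class $[I_{[r:s]}] \in \Cl(R)$ is a well-defined invariant of the point $[r:s]$, independent of the chosen representative. Moreover the $\SL_2(R)$-action sends $[r:s]$ to a point whose associated ideal is $R(ar+cs) + R(br+ds) \subseteq Rr + Rs$; since $\sigma \in \SL_2(R)$ is invertible the reverse inclusion also holds, so the ideal is unchanged and the class $[I_{[r:s]}]$ is an $\SL_2(R)$-invariant. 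This partitions $\PL(K(R))/\SL_2(R)$ according to the ideal class, and only classes admitting a two-generated representative occur, which matches the index set $\{[I] \in \Cl(R) : \mu(I) \le 2\}$.

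Next I would fix, for each relevant class $[I]$, a representative ideal $I$ with $\mu(I) \le 2$, and analyze the fibre over $[I]$. A point $[r:s]$ lying in this fibre has $Rr + Rs \sim I$, meaning there are regular $r_0, s_0 \in R$ with $r_0(Rr + Rs) = s_0 I$; rescaling the representative I may arrange $Rr + Rs = I$ exactly, so $(r,s)$ becomes a genuine element of $\V_2(I)$. This yields a surjection from $\V_2(I)$ onto the set of points of $\PL(K(R))$ whose associated ideal equals $I$, and passing to $\SL_2(R)$-orbits gives a surjection $\V_2(I)/\SL_2(R) \twoheadrightarrow \{\text{points with ideal } I\}/\SL_2(R)$. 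The heart of the argument is to identify exactly which elements of $\V_2(I)$ map to the same point of $\PL(K(R))$: two pairs $(a,b), (a',b') \in \V_2(I)$ give the same projective point precisely when there exist regular $r_0, s_0 \in R$ with $r_0(a,b) = s_0(a',b')$, i.e. $(a',b') = \lambda (a,b)$ for $\lambda = r_0/s_0 \in K(R)^{\times}$. Since both pairs generate the same ideal $I$, such a $\lambda$ must satisfy $\lambda I = I$, forcing $\lambda \in \varrho(I)^{\times}$, and conversely every $\lambda \in \varrho(I)^{\times}$ induces via the diagonal $R$-automorphism $(a,b) \mapsto (\lambda a, \lambda b)$ a pair generating the same projective point. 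This is the step I expect to be the main obstacle, as it requires carefully tracking the regular rescalings and verifying that the multiplier lands in the unit group of the ring of multipliers rather than merely in $K(R)^{\times}$.

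Having matched the relevant equivalences, I would assemble the bijection. The map $\V_2(I) \to \PL(K(R))$, $(a,b) \mapsto [a:b]$, is $\SL_2(R)$-equivariant (both actions being right matrix multiplication on the pair) and is constant precisely on the orbits of the left $\varrho(I)^{\times}$-action, which commutes with the right $\SL_2(R)$-action as noted before the statement. Therefore it descends to a bijection
$$
\varrho(I)^{\times} \backslash \V_2(I) / \SL_2(R) \xrightarrow{\ \sim\ } \{[r:s] : I_{[r:s]} \sim I\}/\SL_2(R).
$$
Taking the disjoint union over all $[I] \in \Cl(R)$ with $\mu(I) \le 2$ and using the partition of $\PL(K(R))/\SL_2(R)$ established in the first step yields the claimed equipotence. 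The only remaining care is to confirm that the choice of class representative $I$ does not matter, which follows because replacing $I$ by a scalar multiple $\lambda I$ with $\lambda \in K(R)^{\times}$ regular induces compatible bijections on both $\V_2(\cdot)$ and $\varrho(\cdot)^{\times}$, leaving the double coset set unchanged up to canonical identification.
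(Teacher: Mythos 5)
Your proof is correct and follows essentially the same route as the paper's: both partition $\PL(K(R))/\SL_2(R)$ by the ideal class $[Ra+Rb]$ of a representative, then identify each fibre with $\varrho(I)^{\times}\backslash\V_2(I)/\SL_2(R)$ via the key observation that a regular rescaling carrying one generating pair of $I$ to another must lie in $\varrho(I)^{\times}$. The only difference is directional (you map $\V_2(I) \to \PL(K(R))$ while the paper maps the fibre to the double cosets), which is immaterial.
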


\begin{proof}
Let $\psi: \PL(K(R)) /\SL_2(R) \rightarrow \{ [I] \in \Cl(R) \, \vert \, \mu(I) \le 2 \}$ be defined by 
$[a: b]\SL_2(R) \mapsto [Ra + Rb]$. The map $\psi$ is obviously surjective. Hence it only remains to show that 
$\psi^{-1}(\{[I]\})$ is equipotent with $\varrho(I)^{\times} \backslash \V_2(I) /\SL_2(R)$ for every ideal class $[I]$ such that $\mu(I) \le 2$. 
We can assume that $I \subseteq R$. Moreover, every element in $\psi^{-1}(\{[I]\})$, can be represented by a coset of the form $[a:b]\SL_2(R)$ with $(a, b) \in \V_2(I)$. 
We claim that the map $\theta: [a:b]\SL_2(R) \rightarrow \varrho(I)^{\times} (a, b) \SL_2(R)$ is a well-defined bijection from $\psi^{-1}(\{[I]\})$ 
onto $\varrho(I)^{\times} \backslash \V_2(I) /\SL_2(R)$. 
Indeed, the identity $[a:b]\SL_2(R) = [a':b']\SL_2(R)$ holds for $(a, b), (a', b') \in \V_2(I)$ if and only if there is $u \in K(R)$ and $\sigma \in \SL_2(R)$ such that 
$(a', b') = (ua, ub)\sigma$. Under this assumption, we clearly have $u \in \varrho(I)^{\times}$, thus $\theta$ is well-defined. Proving that $\theta$ is bijective is routine.
\end{proof}

\begin{lemma} \label{LemDetU}
Let $I$ be an ideal of $R$ and let $u \in \varrho(I)^{\times}$. Then 
$\det_{\mb}(u \mb) = \det_{\mb'}(u \mb')$ for every $\mb, \mb' \in \V_2(I)$.
\end{lemma}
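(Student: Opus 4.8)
The plan is to reduce the statement to the cocycle identity of Lemma~\ref{LemDet} by exploiting the compatibility of multiplication by $u$ with the matrix action relating $\mb$ and $\mb'$. First I would record that, since $u \in \varrho(I)^{\times}$, both $u$ and $u^{-1}$ carry $I$ into itself, so multiplication by $u$ is an $R$-automorphism of $I$; in particular $uI = I$, and therefore both $u\mb$ and $u\mb'$ again lie in $\V_2(I)$, which is what makes $\det_{u\mb}$ meaningful. Because $\mb$ generates $I$, there is a $2 \times 2$ matrix $A$ over $R$ with $\mb' = \mb A$. The key elementary observation is that scalar multiplication by $u \in K(R)$ commutes with right multiplication by a matrix over $R$, so that simultaneously $u\mb' = (u\mb)A$.

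Next I would feed these two factorizations into Lemma~\ref{LemDet}. Part~$(i)$ applied to $\mb' = \mb A$ gives $\det_{\mb}(\mb') = \det(A)$, and applied to $u\mb' = (u\mb)A$ it gives $\det_{u\mb}(u\mb') = \det(A)$; since $\det(A)$ is thus the determinant of a genuine pair of generators, it is a unit of $R/\Fitt_1(I)$. I would then expand $\det_{\mb}(u\mb')$ in two ways via the cocycle relation of part~$(ii)$: along the chain $\mb \rightarrow u\mb \rightarrow u\mb'$ it equals $\det_{\mb}(u\mb)\,\det_{u\mb}(u\mb') = \det_{\mb}(u\mb)\det(A)$, while along the chain $\mb \rightarrow \mb' \rightarrow u\mb'$ it equals $\det_{\mb}(\mb')\,\det_{\mb'}(u\mb') = \det(A)\det_{\mb'}(u\mb')$. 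Equating the two expressions and cancelling the unit $\det(A)$ in $(R/\Fitt_1(I))^{\times}$ yields the desired equality $\det_{\mb}(u\mb) = \det_{\mb'}(u\mb')$.

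Conceptually, the common value is intrinsic: multiplication by $u$ induces an $R$-endomorphism of the cyclic module $\det(I) = \bigwedge^2 I$ (whose annihilator is $\Fitt_1(I)$), namely $m_1 \wedge m_2 \mapsto um_1 \wedge um_2$, and any endomorphism of a cyclic module is multiplication by a scalar that does not depend on the chosen generator; both $\det_{\mb}(u\mb)$ and $\det_{\mb'}(u\mb')$ are exactly the image of that scalar under the respective identifications $\phi_{\mb}$, $\phi_{\mb'}$ of $\det(I)$ with $R/\Fitt_1(I)$, each of which sends its generator to $1$. I expect no serious obstacle here; the only delicate points are bookkeeping ones, namely using $u \in \varrho(I)^{\times}$ (rather than merely $u \in \varrho(I)$) to guarantee $u\mb \in \V_2(I)$ so that $\det_{u\mb}$ is defined, and confirming that $\det(A)$ is invertible modulo $\Fitt_1(I)$ so that the cancellation step is legitimate.
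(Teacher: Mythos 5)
Your proof is correct and follows essentially the same route as the paper: both arguments rest on the cocycle identity of Lemma~\ref{LemDet}.$ii$ together with the key invariance $\det_{u\mb}(u\mb') = \det_{\mb}(\mb')$, which you derive from Lemma~\ref{LemDet}.$i$ using a lifted transition matrix $A$ over $R$ (noting $\mb' = \mb A$ forces $u\mb' = (u\mb)A$), while the paper reads the same fact off the reduced matrix of Lemma~\ref{LemDetPi}. This is only a cosmetic difference, and your bookkeeping on why $u\mb, u\mb' \in \V_2(I)$ and why $\det(A)$ is a unit modulo $\Fitt_1(I)$ is sound.
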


\begin{proof}
Let $\mb = (a, b), \mb' = (a', b') \in \V_2(I)$.
By Lemma \ref{LemDet}.$ii$, we have 
$$\Det_{(a', b')}(ua', ub') = \Det_{(a', b')}(a, b)\Det_{(a, b)}(ua, ub) \Det_{(ua, ub)}(ua', ub').$$
It follows straightforwardly from Lemma \ref{LemDetPi} 
that $$\Det_{(ua, ub)}(ua', ub') = \Det_{(a, b)}(a', b') =  \Det_{(a', b')}(a, b)^{-1},$$ hence the result. 
\end{proof}

\begin{definition} \label{DefEpsilonI}
For $u \in \varrho(I)^{\times}$, we denote by $\det(u)$ the value of $\det_{\mb}(u \mb)$ for any, and hence all generating pairs $\mb$ of $I$.
We denote by $\epsilon(I)$ the image of $\varrho(I)^{\times}$ by $\phi_{\mb}$ (see Section \ref{SubSecFittAndDet}) for any, and hence every $\mb \in \V_2(I)$, that is 
$$\epsilon(I) \Doteq \{ \det(u) \, \vert \, u \in \varrho(I)^{\times} \}.$$
\end{definition}

By Lemma  \ref{LemDetPi} , we have $\det(1) = 1 + \Fitt_1(I)$, $\det(uu') = \det(u) \det(u')$ for every $u, u' \in \varrho(I)^{\times}$
and $\det(u) = u^2 + \Fitt_1(I)$ for every $u \in R^{\times}$. 
Therefore $\epsilon(I)$ is a subgroup of $(R/\Fitt_1(I))^{\times}$ containing the image of $(R^{\times})^2$.

\begin{proposition} \label{PropEpsilonI}
Let $I$ be a two-generated ideal of $R$ such that $\Fitt_1(R) = II^{-1}$. Then the determinant map induces an injective map
$$\varrho(I)^{\times} \backslash \V_2(R) / \SL_2(R) \rightarrow \varrho(I)^{\times} \backslash \V_1(\det(I)) \simeq (R/\Fitt_1(I))^{\times} / \epsilon(I).$$
If $I$ satisfies the hypotheses of Lemma \ref{LemSurjectivity}, then the induced map is a bijection.
\end{proposition}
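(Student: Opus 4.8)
The plan is to fix a base point $\mb \in \V_2(I)$ and track how the determinant map $\det_{\mb} : \V_2(I) \rightarrow (R/\Fitt_1(I))^{\times}$ interacts with the two commuting actions appearing in the double coset: the left action of $\varrho(I)^{\times}$ and the right action of $\SL_2(R)$. On the right, Lemma \ref{LemDet}.$i$ gives $\det_{\mb}(\mb' \sigma) = \det(\sigma)\det_{\mb}(\mb') = \det_{\mb}(\mb')$ for every $\sigma \in \SL_2(R)$, so $\det_{\mb}$ is constant on right $\SL_2(R)$-orbits. On the left, I would combine Lemma \ref{LemDet}.$ii$ with Definition \ref{DefEpsilonI}: writing $\det_{\mb}(u\mb') = \det_{\mb}(\mb')\det_{\mb'}(u\mb')$ and invoking the base-point independence of Lemma \ref{LemDetU} to replace $\det_{\mb'}(u\mb')$ by the common value $\det(u)$, I obtain $\det_{\mb}(u\mb') = \det(u)\det_{\mb}(\mb')$ for every $u \in \varrho(I)^{\times}$. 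Since $\det(u) \in \epsilon(I)$ by definition, the image of each left $\varrho(I)^{\times}$-orbit lands in a single coset of $\epsilon(I)$.

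Combining these two observations, the composite $\V_2(I) \rightarrow (R/\Fitt_1(I))^{\times} \rightarrow (R/\Fitt_1(I))^{\times}/\epsilon(I)$, whose first arrow is $\det_{\mb}$ and whose second is the quotient projection, is constant on the orbits of $\varrho(I)^{\times} \times \SL_2(R)$. Hence it descends to a well-defined map $\varrho(I)^{\times} \backslash \V_2(I) / \SL_2(R) \rightarrow (R/\Fitt_1(I))^{\times}/\epsilon(I)$. I would also record the identification of the target: the automorphism of $\det(I) = \bigwedge^2 I$ induced by multiplication by $u$ sends $a \wedge b$ to $ua \wedge ub$, which under $\phi_{\mb}$ corresponds to multiplication by $\det(u)$; this yields $\varrho(I)^{\times} \backslash \V_1(\det(I)) \simeq (R/\Fitt_1(I))^{\times}/\epsilon(I)$, matching the statement.

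For injectivity, suppose two classes share the same image, that is $\det_{\mb}(\mb_1) = \det(u)\det_{\mb}(\mb_2) = \det_{\mb}(u\mb_2)$ for some $u \in \varrho(I)^{\times}$. Since $u\mb_2 \in \V_2(I)$, Theorem \ref{ThSL2AndFitt1I} forces $\mb_1 \sim_{\SL_2(R)} u\mb_2$, so $\mb_1$ and $\mb_2$ represent the same double coset. For the surjectivity claim, under the hypotheses of Lemma \ref{LemSurjectivity} I would take as base point the pair $\mb = (a, b)$ furnished by that lemma, for which $\det_{(a,b)} : \V_2(I) \rightarrow (R/\Fitt_1(I))^{\times}$ is already surjective; the induced map on double cosets is then surjective onto $(R/\Fitt_1(I))^{\times}/\epsilon(I)$.

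There is no serious obstacle here: the proposition is essentially a bookkeeping consequence of Theorem \ref{ThSL2AndFitt1I} together with the multiplicativity recorded in Lemma \ref{LemDet}. The only point requiring genuine care is the cocycle identity $\det_{\mb}(u\mb') = \det(u)\det_{\mb}(\mb')$, which rests on the base-point independence of Lemma \ref{LemDetU}; getting this relation right is precisely what guarantees that the left and right quotients interact cleanly and that the determinant map is well-defined on double cosets rather than merely on one-sided orbits.
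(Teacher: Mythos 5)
Your proposal is correct and follows the paper's own route: the paper's proof is literally ``Apply Theorem \ref{ThSL2AndFitt1I} and Lemma \ref{LemDetU}'', and your argument simply spells out what that application entails (the cocycle identity $\det_{\mb}(u\mb') = \det(u)\det_{\mb}(\mb')$ via Lemma \ref{LemDet}.$ii$ and Lemma \ref{LemDetU}, descent to double cosets, injectivity from Theorem \ref{ThSL2AndFitt1I}, and surjectivity from Lemma \ref{LemSurjectivity}). Your explicit identification of $\varrho(I)^{\times} \backslash \V_1(\det(I))$ with $(R/\Fitt_1(I))^{\times}/\epsilon(I)$ is a welcome detail the paper leaves implicit.
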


\begin{proof}
Apply Theorem \ref{ThSL2AndFitt1I} and Lemma \ref{LemDetU}.
\end{proof}

\begin{sproof}{Proof of Corollary \ref{CorP1}}
Combine Propositions \ref{PropP1} and \ref{PropEpsilonI}, observing that the hypotheses of 
Lemma \ref{LemSurjectivity} are satisfied. Indeed, an order in a number field is a one-dimensional domain, hence almost of stable range $1$.
\end{sproof}

\section{Equivalence of generating vectors in Bass rings} \label{SecBassRings}
This section illustrates Theorem \ref{ThSL2} with two examples of Bass rings. 
Subsection \ref{SecOrders} deals with arbitrary orders in quadratic number fields. 
Subsection \ref{SecCurves} deals with the coordinate ring of the plane curve $y^2 = x^n$ where $n \ge 3$ is an odd rational integer.

Recall that the normalization $\tilde{R}$ of $R$ is the integral closure of $R$ in its total ring of quotients $K(R)$.
A \emph{Bass ring} is a Noetherian reduced ring whose normalization $\tilde{R}$ 
is a finitely generated $R$-module and whose ideals can be generated by two elements. 
Bass domains were originally studied by Bass who proved that their finitely generated torsion-free modules split into a direct product of ideals and that the converse 
holds for a Noetherian domain $R$ if $\tilde{R}$ is a finitely generated $R$-module \cite[Theorem 1.7]{Bass62}, see also \cite[Section 4]{Lam00} for historical remarks.

For every example of Bass ring in this article, we observe that $\Fitt_1(I)$ contains $(\tilde{R} : R)$ if $I$ is a faithful ideal of $R$. 
For each example, this implies that $\Fitt_1(I)$ takes only finitely many values because of

\begin{proposition} \cite[Proposition 2.2]{LW85}
Let $R$ be a Bass ring. Then $R/(R: \tilde{R})$ is an Artinian principal ideal ring.
\end{proposition}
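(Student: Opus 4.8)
This statement is a theorem of Levy and Wiegand, so the plan is to reconstruct its proof from the defining properties of a Bass ring rather than to invoke it. Write $\ic = (R:\tilde{R})$ for the conductor. The first step is to record that $\ic$ is a \emph{regular} ideal: since $\tilde{R}$ is a finitely generated $R$-submodule of $K(R)$, clearing the finitely many denominators of a generating set produces a regular element $d$ with $d\tilde{R}\subseteq R$, so $d\in\ic$. The second step is to identify the normalization. As $R$ is reduced and Noetherian with module-finite normalization, $K(R)$ is the product of the residue fields at the minimal primes and $\tilde{R}=\prod_{\ip\in\Min(R)}\overline{R/\ip}$, each factor being the integral closure of the one-dimensional Noetherian domain $R/\ip$, hence (by Krull--Akizuki together with module-finiteness) a Dedekind domain. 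Here one-dimensionality is forced by the two-generator hypothesis, which already fails in any local ring of embedding dimension $\ge 3$; the Artinian reduced case is degenerate, with $\tilde{R}=R$ and $R/\ic=0$.

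With these in hand I would show that $R/\ic$ is Artinian. Because each factor of $\tilde{R}$ is Dedekind and $\ic$ is a regular ideal of $\tilde{R}$, the quotient $\tilde{R}/\ic$ is a finite product of quotients of Dedekind domains by nonzero ideals, hence Artinian and in fact a principal ideal ring. The inclusion $R/\ic\hookrightarrow\tilde{R}/\ic$ is an injective, module-finite (so integral) extension whose target has Krull dimension $0$; therefore $R/\ic$ has dimension $0$, and being also Noetherian it is Artinian. It consequently decomposes as a finite product of local Artinian rings, so it suffices to prove that each local factor is a principal ideal ring, i.e.\ that its maximal ideal is principal.

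The decisive step is this local one. Fix a maximal ideal $\im\supseteq\ic$ and pass to $R_{\im}$; then $\tilde{R}_{\im}$ is a semilocal Dedekind domain, hence a principal ideal domain. If $R_{\im}$ is normal it is a discrete valuation ring with $\ic_{\im}=R_{\im}$, so the corresponding factor is trivial. Otherwise the two-generator hypothesis forces the multiplicity $e(R_{\im})$ to equal $2$, which yields $\tilde{R}_{\im}=R_{\im}+R_{\im}\theta$ for a single element $\theta$ and a principal reduction $x\in\im$ of the maximal ideal (that is, $\im^2 = x\im$). Combining $\im^2 = x\im$ with the fact that $\ic_{\im}$ contains a power of $\im$ (as $R_\im/\ic_\im$ is Artinian local), a short computation shows that the image of $\im$ in $R_{\im}/\ic_{\im}$ is generated by the single element $x$. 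Hence $(R/\ic)_{\im}$ has principal maximal ideal, which completes the argument.

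The main obstacle is precisely the structural input invoked in the last step: extracting, from the bare assumption that every ideal of $R$ is two-generated, the fact that a non-normal local Bass ring has multiplicity $2$ with a principal reduction of its maximal ideal, equivalently that $\tilde{R}_\im$ is generated by two elements over $R_\im$. This is the heart of the classification of Bass rings and is the one place where the two-generator condition must be genuinely exploited; everything preceding it is a formal consequence of the conductor being a regular $\tilde{R}$-ideal and of $\tilde{R}$ being a finite product of Dedekind domains.
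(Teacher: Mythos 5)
The paper itself contains no argument for this proposition: it is quoted from Levy--Wiegand \cite[Proposition 2.2]{LW85}, so your attempt must be judged as a reconstruction of their proof. Your global skeleton is sound (localize at a maximal ideal containing the conductor $\ic=(R:\tilde{R})$, note that $\tilde{R}/\ic$ is a product of quotients of Dedekind domains, reduce to showing each local factor of $R/\ic$ has principal maximal ideal), but the decisive local step is broken. You claim that from $\im^2=x\im$ together with the fact that $\ic_{\im}$ contains a power of $\im$, ``a short computation'' shows the image of $\im$ in $R_{\im}/\ic_{\im}$ is generated by $x$. No such computation can exist, because these two hypotheses do not imply the conclusion. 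Take $R=k[[t^5,t^7,t^9,t^{11},t^{13}]]$, the ring of the numerical semigroup $\{0,5,7\}\cup[9,\infty)$. Its maximal ideal satisfies $\im^2=t^5\im$, and the conductor is $\ic=t^9k[[t]]\supseteq\im^2$; yet $R/\ic\cong k[u,v]/(u^2,uv,v^2)$, where $u,v$ are the images of $t^5,t^7$, so the maximal ideal of $R/\ic$ needs two generators and in particular is not generated by the image of $x=t^5$. This ring is of course not Bass ($\mu(\im)=5$), so it does not contradict the proposition, but it shows that the two-generator hypothesis must be invoked again precisely at your final step, where your proposal is silent. Two secondary issues: the multiplicity-two statement with a principal reduction that you black-box is Bass's theorem and is nowhere proved, and your justification that $\dim R\le 1$ (``fails in embedding dimension $\ge 3$'') is incomplete, since a two-dimensional regular local ring has embedding dimension $2$; one must instead use that $\mu(\im^n)\to\infty$ when $\dim R\ge 2$.

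Ironically, the step you call ``the heart of the classification'' is, in the form actually needed, the easy part, and once stated correctly it finishes the proof with no multiplicity theory at all. Work at a maximal ideal $\im\supseteq\ic$ and let $M$ be any $R_{\im}$-module with $R_{\im}\subseteq M\subseteq\tilde{R}_{\im}$. Clearing denominators by a regular element $d$ identifies $M$ with the ideal $dM$ of $R_{\im}$, so $\mu(M)\le 2$; moreover $1\notin\im M$, because $\im\tilde{R}_{\im}$ lies in the Jacobson radical of the semilocal ring $\tilde{R}_{\im}$. By Nakayama, $1$ is then part of a minimal generating set of $M$, i.e.\ $M=R_{\im}+R_{\im}b$ for some $b\in M$. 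Applying this to $M=\tilde{R}_{\im}$ gives $\tilde{R}_{\im}=R_{\im}+R_{\im}\theta$, whence $r\mapsto r\theta$ induces an $R_{\im}$-module isomorphism $R_{\im}/\ic_{\im}\cong\tilde{R}_{\im}/R_{\im}$, the kernel of $R_{\im}\to\tilde{R}_{\im}/R_{\im}$ being $(R_{\im}:R_{\im}\theta)=(R_{\im}:\tilde{R}_{\im})$. The ideals of $R_{\im}/\ic_{\im}$ therefore correspond to the intermediate modules $M$, and each $M/R_{\im}=R_{\im}\overline{b}$ is cyclic; hence every ideal of $R_{\im}/\ic_{\im}$ is principal. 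Combined with the Artinian reduction and globalization you already have, this proves the proposition; the two-generator hypothesis enters exactly here, applied to all intermediate modules rather than only to $\tilde{R}$, and not through multiplicity or reductions.
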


\subsection{Ideals in orders of quadratic fields} \label{SecOrders}
In this section, we prove Theorem \ref{ThOrder}. On our way, we compute $\Fitt_1(I)$ and 
describe $\V_2(I)/\SL_2(I) \simeq (R/\Fitt_1(I))^{\times}$ explicitly for an arbitrary ideal $I$ of an order in a quadratic number field. 
 
Let $m$ be a square-free rational integer. We set $K \Doteq \Q(\sqrt{m})$ and denote by $\iO$ the ring of integers of the quadratic field $K$. 
An \emph{order} of $K$ is a subring of finite index in $\iO$. Setting $\omega = \left\{
\begin{array}{cc}
\sqrt{m} & \text{ if } m \not\equiv 1 \mod 4, \\
\frac{1 + \sqrt{m}}{2} & \text{ if }  m \equiv 1 \mod 4. \\
\end{array}\right.
$, then we have $\iO = \Z + \Z \omega$ and any order of $K$ is of the form $\iO_f \Doteq \Z + \Z f \omega$ for some rational integer $f > 0$ \cite[Lemma 6.1]{IK14}. 
Moreover, the inclusion $\iO_f \subseteq \iO_{f'}$ holds true if and only if $f'$ divides $f$.
If $I$ is an ideal of $\iO_f$, then its ring of multipliers $\varrho(I) \Doteq \{ r \in \iO \, \vert \, rI \subseteq I\}$ is 
the smallest order $\iO$ of $K$ such that $I$ is projective, equivalently invertible, as an ideal of $\iO$ \cite[Proposition 5.8]{LW85}.
For the remainder of this section, we fix $f > 0$ and set $R \Doteq  \iO_f$ and $\tilde{R} \Doteq \iO$.

\begin{proposition} \label{PropFitt1OfOrderIdeal}
Let $I$ be an ideal of $R$ and let $f' > 0$ be such that $\varrho(I) = \iO_{f'}$. 
Then we have
\begin{itemize}
\item[$(i)$] $\Fitt_1(I) = (R : \varrho(I))$,
\item[$(ii)$] $R / \Fitt_1(I) \simeq \varrho(I) / R$ as $R$-modules and $R / \Fitt_1(I)  \simeq \Z / (f/f') \Z$ as rings.
\end{itemize}
\end{proposition}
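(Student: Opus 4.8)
The plan is to prove $(i)$ by purely ideal-theoretic manipulations and then to deduce $(ii)$ from $(i)$ by a short computation with the $\Z$-bases of the orders involved. Throughout I write $S \Doteq \varrho(I) = \iO_{f'}$; since $\varrho(I) \supseteq R$ we have $f' \mid f$, and I set $g \Doteq f/f'$. As $R$ is a domain, every non-zero ideal is generated by two regular elements, so Lemma \ref{LemFitt1} gives $\Fitt_1(I) = II^{-1}$ from the start, and the whole of $(i)$ reduces to the identity $II^{-1} = (R:S)$.

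For $(i)$ I would establish the two inclusions separately. The inclusion $II^{-1} \subseteq (R:S)$ is the easy one: from $S = \varrho(I)$ one has $SI = I$, hence $S \cdot II^{-1} = (SI)I^{-1} = II^{-1} \subseteq R$, so every $x \in II^{-1}$ satisfies $xS \subseteq R$, i.e.\ $x \in (R:S)$. For the reverse inclusion I would use that $I$ is invertible as a fractional ideal of $S$, which is exactly the defining property of $\varrho(I)$ recalled before the statement (via \cite[Proposition 5.8]{LW85}); writing $(S:I) = \{x \in K(R) : xI \subseteq S\}$ this means $I(S:I) = S$. The key observation is $(R:S)(S:I) \subseteq I^{-1}$: for $c \in (R:S)$ and $y \in (S:I)$ one has $(cy)I = c(yI) \subseteq cS \subseteq R$. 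Since $(R:S)$ is an ideal of $S$, it follows that $(R:S) = (R:S)S = (R:S)\,I\,(S:I) = I\big((R:S)(S:I)\big) \subseteq II^{-1}$, which closes the argument.

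For $(ii)$ I would first treat the ring isomorphism. Using the $\Z$-bases $R = \Z + \Z f\omega$ and $S = \Z + \Z f'\omega$ together with a relation $\omega^2 = t\omega + n$ for suitable $t, n \in \Z$, a one-line check shows both $f\omega \in (R:S)$ and $\Z \cap (R:S) = g\Z$. Consequently the composite $\Z \hookrightarrow R \twoheadrightarrow R/(R:S)$ is a surjective ring homomorphism (the image of $f\omega$ vanishes) with kernel $g\Z$, yielding $R/\Fitt_1(I) \simeq \Z/(f/f')\Z$ as rings. For the $R$-module statement I would note that $S = R + \Z f'\omega$, so $S/R$ is a cyclic $R$-module generated by $f'\omega + R$, and that its annihilator is precisely $\{x \in R : xS \subseteq R\} = (R:S) = \Fitt_1(I)$; hence $\varrho(I)/R = S/R \simeq R/(R:S) = R/\Fitt_1(I)$ as $R$-modules.

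I expect the only genuinely delicate point to be the reverse inclusion in $(i)$, where one must invoke $S$-invertibility of $I$ in the correct form and keep straight the two distinct inverses $I^{-1} = (R:I)$ and $(S:I)$; everything in $(ii)$ is a transparent computation with the explicit $\Z$-bases once $(i)$ identifies $\Fitt_1(I)$ with the conductor $(R:\varrho(I))$.
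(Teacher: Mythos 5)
Your proof is correct, but part $(i)$ takes a genuinely different route from the paper's. Both arguments begin identically: $\Fitt_1(I) = II^{-1}$ via Lemma \ref{LemFitt1}, and the easy inclusion $II^{-1} \subseteq (R:\varrho(I))$ from $\varrho(I)\,II^{-1} \subseteq R$. For the reverse inclusion, the paper works in coordinates: it develops the standard basis $(a, d + ef\omega)$ of $I$ (Lemma \ref{LemStandardBasis}), the content of the associated quadratic form (Lemma \ref{LemContent}), and the explicit formulas $I^{-1} = \frac{1}{\Norm(I)}\sigma(I)$ and $II^{-1} = \Z\frac{f}{f'} + \Z f\omega$ (Lemma \ref{LemmaIMinus}), and then concludes by comparing indices: both $(R:\varrho(I))$ and $\Fitt_1(I)$ have index $f/f'$ in $R$, so the inclusion is an equality. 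You instead invoke the invertibility of $I$ over $S = \varrho(I)$ (the fact from \cite[Proposition 5.8]{LW85} quoted just before the proposition) in a purely ideal-theoretic way: since the conductor $(R:S)$ is an $S$-ideal and $I(S:I) = S$, you get $(R:S) = (R:S)\,I\,(S:I) \subseteq II^{-1}$ because $(R:S)(S:I) \subseteq I^{-1}$. This is shorter, coordinate-free, and strictly more general: the same three lines prove $\Fitt_1(I) = (R:\varrho(I))$ for any two-generated nonzero ideal of any domain that is invertible over its ring of multipliers, which bears directly on the question raised in the paper's final remarks about extending this identity beyond the examples treated. What the paper's computational route buys is the explicit data it reuses elsewhere: Lemmas \ref{LemContent} and \ref{LemmaIMinus} (in particular the norm formula for $I^{-1}$ and the explicit generators $\Z\frac{f}{f'} + \Z f\omega$ of $\Fitt_1(I)$) feed into the proofs of Theorem \ref{ThOrder} and Proposition \ref{PropGL2}, so the paper could not dispense with them even granting your shortcut. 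Your part $(ii)$ is essentially the paper's argument — cyclicity of $\varrho(I)/R$ as a submodule of $\Rt/R \simeq \Z/f\Z$ — carried out with explicit $\Z$-bases; your verification that $R/(R:S) \simeq \Z/(f/f')\Z$ as \emph{rings}, via the surjection $\Z \to R/(R:S)$ killing $f\omega$, is in fact slightly more careful than the paper's terse treatment of that point.
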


\begin{sproof}{Proof of Proposition \ref{PropFitt1OfOrderIdeal}.$ii$}
By assertion $(i)$,  we have $R / \Fitt_1(I) = R / (R : \varrho(I))$. 
As $\Rt / R \simeq \Z /f \Z$, the $R$-submodule $\varrho(I) / R$ of $\Rt / R$ is cyclic so that $R / (R : \varrho(I)) \simeq \varrho(I) / R$. 
Since the image of $\varrho(I)/R$ in $\Z /f \Z \simeq \Rt / R$ is $f'(\Z/f\Z)$,  assertion $(ii)$ is established.
\end{sproof}

Our proof of Proposition \ref{PropFitt1OfOrderIdeal}.$i$ relies on 
three lemmas revolving around the \emph{standard basis} of an ideal, a definition that we introduce after the next lemma.
We define the \emph{norm $\Norm(I)$ of an ideal $I$ of $R$} as the index $[R : I] = \vert R / I \vert$ of $I$ in $R$. 
An ideal $I$ of $R$ is said to be \emph{primitive} if it cannot be written as $I = eJ$ some rational integer $e$ and some ideal $J$ of $R$. 

\begin{lemma}\cite[Lemma 6.2 and its proof]{IK14} \label{LemStandardBasis}
Let $I$ be a non-zero ideal of $R = \iO_f$. Then there exist rational integers $a, e > 0$ and $d \ge 0$ such that $-a/2 \le d < a/2$, 
$e$ divides both $a$ and $d$ and we have
$$
I = \Z a + \Z(d + e f \omega).
$$
The integers $a, d$ and $e$ are uniquely determined by $I$. The integer $ae$ is the norm of $I$ and we have $\Z \cap I = \Z a$. The ideal $I$ is primitive if and only if $e = 1$
\end{lemma}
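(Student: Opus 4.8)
The plan is to put $I$ into Hermite normal form relative to the $\Z$-basis $(1, f\omega)$ of $R$. Write $\eta = f\omega$, so that $R = \Z \oplus \Z\eta$ is a free $\Z$-module of rank $2$, and let $p \colon R \to \Z$ be the projection onto the $\eta$-coordinate, $p(x + y\eta) = y$. The whole statement then amounts to choosing a triangular $\Z$-basis of the sublattice $I$ and reading off its invariants; the only place where the hypothesis that $I$ is an \emph{ideal} (and not merely a full sublattice) enters is the divisibility $e \mid a$ and $e \mid d$ and the primitivity criterion.

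First I would check that $I$ is a full-rank sublattice. Since $R$ is stable under conjugation — one verifies $\overline{\omega} \in R$ in both cases, whence $\overline{\eta} = f\overline{\omega} \in R$ — every nonzero $\alpha \in I$ has $\Norm(\alpha) = \alpha\overline{\alpha} \in (I \cap \Z) \setminus \{0\}$, while $\alpha$ and $\alpha\eta$ both lie in $I$ and are $\Q$-independent; hence $I$ has rank $2$. Consequently $I \cap \Z = \Z a$ for a unique $a > 0$ and $p(I) = \Z e$ for a unique $e > 0$. Picking $\beta = d + e\eta \in I$ with $p(\beta) = e$, any $\gamma \in I$ satisfies $\gamma - k\beta \in I \cap \Z$ for $k = p(\gamma)/e$, so $I = \Z a + \Z\beta = \Z a + \Z(d + ef\omega)$. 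Translating $\beta$ by a suitable integer multiple of $a \in I$ moves $d$ into the prescribed range $-a/2 \le d < a/2$, and the standard uniqueness of the Hermite normal form then gives uniqueness of the triple $(a, d, e)$.

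The arithmetic input is that every element of $I$ has its $\eta$-coordinate in $p(I) = \Z e$. Applied to $a\eta \in I$ this yields $e \mid a$. For $d$, write $\eta^2 = B\eta + C$ with $B = \Tr(\eta)$ and $C = -\Norm(\eta)$ in $\Z$; then $\eta\beta = eC + (d + eB)\eta \in I$, so its $\eta$-coordinate $d + eB$ is divisible by $e$, forcing $e \mid d$. The norm assertion is now immediate: the matrix expressing the basis $(a,\, d + e\eta)$ of $I$ in terms of $(1, \eta)$ is lower triangular with diagonal $(a, e)$, so $\Norm(I) = [R : I] = ae$, and $\Z \cap I = \Z a$ holds by the very choice of $a$.

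Finally, for the primitivity criterion I would use that $e$ scales multiplicatively: if $I = gJ$ with $g > 0$ and $J$ an ideal, then $p(I) = g\,p(J)$ gives $e = g e_J$, so $e = 1$ forces $g = 1$, proving that $e = 1$ implies $I$ primitive. Conversely, if $e > 1$, set $a' = a/e$, $d' = d/e$ (legitimate by the divisibilities just proved) and $J = \Z a' + \Z(d' + \eta)$, so that $I = eJ$; it remains to verify that $J$ is itself an ideal, i.e.\ $\eta J \subseteq J$. This is the one step I expect to be the real obstacle, since it is exactly here that one must descend the ideal property from $I$ to $J$: from $\beta = e(d' + \eta)$ we get $\eta\beta = e\,\eta(d' + \eta) \in I = eJ$, and dividing by $e$ shows $\eta(d' + \eta) \in J$, while $a'\eta = a'(d'+\eta) - a'd' \in J$; hence $\eta J \subseteq J$, $J$ is an ideal, and $I = eJ$ is not primitive. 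Everything else is routine lattice and Hermite-normal-form bookkeeping.
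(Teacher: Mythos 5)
Your proof is correct. There is, however, nothing internal to compare it against: the paper does not prove this lemma at all, but quotes it wholesale from \cite[Lemma 6.2 and its proof]{IK14}, so your write-up supplies precisely the argument that the citation outsources, and it is the standard one (Hermite normal form of the sublattice $I \subseteq \Z \oplus \Z\eta$, $\eta = f\omega$). The merit of your account is that it cleanly isolates every place where the ideal hypothesis, as opposed to mere full-rank-lattice bookkeeping, is needed, and handles each correctly: full rank via $\alpha\overline{\alpha} = \Norm(\alpha) \in (I \cap \Z)\setminus\{0\}$ (using that $R$ is stable under conjugation in both congruence cases for $m$); the divisibility $e \mid a$ from $p(a\eta) \in p(I)$; the divisibility $e \mid d$ from $p(\eta\beta) \in p(I)$ with $\eta^2 = \Tr(\eta)\eta - \Norm(\eta)$; and, in the converse half of the primitivity criterion, the verification that $J = \Z(a/e) + \Z(d/e + \eta)$ is itself an ideal, which you rightly identified as the only substantive step and settled by dividing $\eta\beta \in I = eJ$ by $e$ and noting $a'\eta = a'(d'+\eta) - d'a' \in J$. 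One cosmetic remark on the statement rather than your proof: as printed, the conditions ``$d \ge 0$'' and ``$-a/2 \le d < a/2$'' cannot in general be met simultaneously (with $e > 0$ fixed, $d$ is only adjustable modulo $a$), so one of them is a typo; your proof delivers the normalization $-a/2 \le d < a/2$ together with uniqueness of $(a,d,e)$, which is the form actually used later in the paper (e.g.\ in Lemma \ref{LemContent}), so nothing is lost.
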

Note that, since  $\Z \cap I = \Z a$, the rational integer $a$ divides $\Norm(d + e f \omega)$. 
We call the generating pairs $(a, d + ef \omega)$ the \emph{standard basis of $I$}. 
Let us associate to $I$ the binary quadratic form $q_I$ defined by $$q_I(x, y) = \frac{\Norm(xa + y(d + ef\omega))}{\Norm(I)}.$$

Then we have 
$$e q_I(x, y) = ax^2 + bxy + cy^2$$ 
with $$b = \Tr(d + ef \omega) \text { and } c = \frac{\Norm(d + ef \omega)}{a}.$$
We define the \emph{content of $q_I$} as the greatest common divisors of its coefficients, and we write $$\content(q_I) \Doteq \frac{\gcd(a, b, c)}{e}.$$

\begin{lemma} \label{LemContent}
Let $I$ be a non-zero ideal of $R$ with standard basis $(a, d + ef \omega)$. Let $f' > 0$ be such that $\varrho(I) = \iO_{f'}$. 
Then we have $$\gcd(a, d, ef) = e \content(q_I) = ef/f'.$$
\end{lemma}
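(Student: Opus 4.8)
The plan is to compute each of the three quantities from the standard basis and then identify them, beginning by removing the factor $e$. The membership condition defining an ideal forces $ae \mid \Norm(d+ef\omega)$, hence $e \mid c$; combined with $e \mid a$, $e \mid d$ and $b=\Tr(d+ef\omega)=2d+ef\Tr(\omega)$ this gives $e \mid a,b,c,d,ef$. Consequently $\gcd(a,d,ef)$ and $\gcd(a,b,c)=e\,\content(q_I)$ are both $e$ times the corresponding invariants of the primitive ideal $I/e$, whose standard basis is $(a/e,\ (d/e)+f\omega)$, and $\varrho(I/e)=\varrho(I)$. So it suffices to assume $e=1$ and to prove $\gcd(a,d,f)=\gcd(a,b,c)=f/f'$.

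Next I would determine $f'$ by computing $\varrho(I)$ by hand. For $\xi=x+y\omega\in\iO$ the inclusion $\xi I\subseteq I$ is equivalent to $\xi a\in I$ and $\xi(d+f\omega)\in I$. Expanding both with $\omega^2=\Tr(\omega)\omega-\Norm(\omega)$ and using $\Norm(d+f\omega)=ac$, each membership becomes a congruence in which $x$ is unconstrained, and the surviving conditions are $f\mid ya$, $f\mid yd$ and $f\mid yc$, i.e. $f\mid y\gcd(a,d,c)$. Hence $\varrho(I)=\Z+\Z f'\omega$ with $f'=f/\gcd(f,a,d,c)$, so that $f/f'=\gcd(f,a,d,c)$. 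Along the way I would record the discriminant identity $b^2-4ac=f^2D$, where $D=\disc(\iO)$; it follows from a one-line expansion in each of the two cases for $\omega$, and it is the bridge between the form data $(a,b,c)$ and the conductor data $(a,d,f)$.

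It then remains to match the gcd's. Since $\content(q_I)=\gcd(a,b,c)$ when $e=1$, the previous step reduces the proposition to the purely arithmetic identity $\gcd(a,b,c)=\gcd(f,a,d,c)$, whose easy half $\gcd(f,a,d,c)\mid\gcd(a,b,c)$ is immediate from $b=2d+f\Tr(\omega)$; for the reverse half I would argue prime by prime, using $b^2-4ac=f^2D$ to pin down the valuations $v_p$ of $b$ in terms of those of $a,c,d,f$. The delicate point, and the place I expect the real work to lie, is the comparison with the stated member $\gcd(a,d,f)$: the computation naturally produces the invariant $\gcd(a,c,d,f)$, and reconciling it with $\gcd(a,d,f)$ amounts to showing $\gcd(a,d,f)\mid c$. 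This can only be extracted from the discriminant relation together with the hypothesis that $m$ is squarefree, and it is most subtle at the prime $2$ and at the primes ramifying in $\iO$, where $b=2d$ can mask a factor that only $c$ detects; this is the crux of the lemma.
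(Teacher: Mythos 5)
Your reduction to the primitive case $e=1$ is exactly the paper's first step; after that the paper simply invokes the proof of Lemma 6.5 of \cite{IK14}, so your plan to recompute everything by hand is a genuinely more self-contained route. Steps 1--4 of that plan are correct and completable. The reduction itself is sound (the ideal condition does force $e \mid c$, and all three displayed quantities scale by $e$). Writing $\xi = u + v\omega$, the conditions $\xi a \in I$ and $\xi(d+f\omega) \in I$ do reduce, via $\omega^2 = \Tr(\omega)\omega - \Norm(\omega)$ and $ac = \Norm(d+f\omega)$, to $f \mid va$, $f \mid vd$, $f \mid vc$, so that $\varrho(I) = \Z + \Z f'\omega$ with $f/f' = \gcd(f,a,d,c)$; the identity $b^2 - 4ac = f^2D$ holds in both cases for $\omega$; and the equality $\gcd(a,b,c) = \gcd(f,a,d,c)$ can indeed be proved prime by prime (though at $p = 2$ the discriminant identity alone is not enough --- you need the relation $ac = \Norm(d+f\omega)$ there as well). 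Together these give $e\content(q_I) = ef/f'$, and, since $\gcd(f,a,d,c) \mid d$, also $ef/f' \mid d$.

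The gap is precisely the step you flag as the crux, and it is fatal: $\gcd(a,d,f) \mid c$ is \emph{false}, and with it the first equality of the lemma as printed. Take $m = 2$, $f = 2$, $R = \Z + \Z\, 2\sqrt{2}$, and $I = \Z\, 4 + \Z(2 + 2\sqrt{2})$. One checks directly that $2\sqrt{2}\cdot I \subseteq I$, so $I$ is an ideal of $R$; its standard basis has $a = 4$, $e = 1$ and $d \equiv 2 \pmod 4$ (so $d = \pm 2$ depending on the sign normalization, which affects none of the three quantities). Then $b = 2d = \pm 4$ and $c = (d^2 - 8)/4 = -1$, so $e\content(q_I) = \gcd(4,4,1) = 1$; and since $\sqrt{2}(d + 2\sqrt{2}) = 4 + d\sqrt{2} \notin I$, we get $\varrho(I) = R$, i.e.\ $f' = 2$ and $ef/f' = 1$; yet $\gcd(a,d,ef) = \gcd(4,2,2) = 2$. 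So $\gcd(a,d,f) \nmid c$, no appeal to the squarefreeness of $m$ can rescue this, and your step 5 must be abandoned rather than completed. What is true is exactly what your steps 1--4 deliver, namely $e\content(q_I) = ef/f'$ and $ef/f' \mid d$ --- and these are also the only facts the paper uses downstream (in Lemma \ref{LemmaIMinus}); equivalently, the first member of the lemma should read $\gcd(a,d,c,ef)$, which by your own computation of $\varrho(I)$ does equal $ef/f'$.
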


\begin{sproof}{Proof of Lemma \ref{LemContent}}
By Lemma \ref{LemStandardBasis}, the fractional ideal $J \Doteq \frac{1}{e}I$ is an integral ideal of $R$ with standard basis $(a/e, d/e + f\omega)$. 
It is immediate to see that the result holds for $I$ if and only if it holds for $J$. Therefore, we can assume, without loss of generality, that $e = 1$. 
Now the result can be inferred straightforwardly from \cite[Proof of Lemma 6.5]{IK14}.
\end{sproof}

We denote by $\sigma$ the automorphism of $K$ which maps $\sqrt{m}$ to $-\sqrt{m}$.

\begin{lemma} \label{LemmaIMinus}
Let $I$ be a non-zero ideal of $R$ and let $f' > 0$ be such that $\varrho(I) = \iO_{f'}$. 
Then the two following hold.
\begin{itemize}
\item[$(i)$] $I^{-1} = \frac{1}{\Norm(I)}\sigma(I)$,
\item[$(ii)$] $\Fitt_1(I) = II^{-1} = \Z\frac{f}{f'} + \Z f \omega$.
\end{itemize} 
\end{lemma}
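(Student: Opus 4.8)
\documentclass[12pt]{amsart}

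The plan is to prove both assertions by explicit computation with the standard basis $(a, d + ef\omega)$ of $I$, leveraging the content computation of Lemma \ref{LemContent} and the general identity $\Fitt_1(I) = II^{-1}$ from Lemma \ref{LemFitt1}. For assertion $(i)$, I would first recall that for a non-zero ideal $I$ of an order $R$, the product $I \sigma(I)$ is stable under $\sigma$ and hence is generated by rational integers; the key fact is that $I\sigma(I) = \Norm(I) R$ when $I$ is invertible, but in general one shows $I\sigma(I) = \Norm(I)\varrho(I)$. A clean route is to verify directly that $\frac{1}{\Norm(I)}\sigma(I)$ satisfies the defining property of $I^{-1}$, namely that it is the largest fractional ideal $J$ with $IJ \subseteq R$. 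Using the standard basis, $\sigma(I) = \Z a + \Z(d + ef\sigma(\omega))$, and multiplying out $I \cdot \frac{1}{\Norm(I)}\sigma(I)$ reduces to checking that the products of basis elements, divided by $\Norm(I) = ae$, land in $R$ and generate exactly the right module; the divisibility conditions $e \mid a$, $e \mid d$, and $a \mid \Norm(d + ef\omega)$ from Lemma \ref{LemStandardBasis} are what make this work.

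For assertion $(ii)$, the equality $\Fitt_1(I) = II^{-1}$ is already furnished by Lemma \ref{LemFitt1}, since by Lemma \ref{LemmaReduced} (applied to the reduced ring $R = \iO_f$, whose normalization is the domain $\iO$) the ideal $I$ can be generated by two regular elements. So the substance is the identification $II^{-1} = \Z\frac{f}{f'} + \Z f\omega$. Substituting the formula from $(i)$, I would compute $II^{-1} = \frac{1}{\Norm(I)} I\sigma(I)$ explicitly using the standard basis. The product $I\sigma(I)$ is spanned over $\Z$ by $a^2$, $a(d + ef\sigma(\omega))$, $a(d + ef\omega)$, and $(d + ef\omega)(d+ef\sigma(\omega)) = \Norm(d + ef\omega)$. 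Taking appropriate $\Z$-linear combinations and dividing by $\Norm(I) = ae$, the integer part collapses to $\gcd$ of the relevant coefficients, which by Lemma \ref{LemContent} equals $\gcd(a, d, ef) = ef/f'$, giving the generator $\frac{ef/f'}{ae}\cdot(\text{something})$; after simplification the rational part of $II^{-1}$ is $\Z\frac{f}{f'}$ and the $\omega$-part is $\Z f\omega$.

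The main obstacle I anticipate is bookkeeping in the second computation: tracking which $\Z$-combinations of the four spanning products of $I\sigma(I)$ yield the pure integer $ef\cdot\gcd(a,d,ef)/\dots$ versus the $\omega$-component, and verifying that no finer lattice is generated. One has to be careful because $\omega$ and $\sigma(\omega)$ differ depending on whether $m \equiv 1 \bmod 4$, so $\Tr(\omega)$ and $\Norm(\omega)$ enter and the two cases should be checked (or handled uniformly via $b = \Tr(d + ef\omega)$ and $c = \Norm(d+ef\omega)/a$ as already set up before Lemma \ref{LemContent}). I would organize the computation around the quadratic form coefficients $a, b, c$ so that the $\gcd(a, b, c)$ appearing in $\content(q_I)$ is exactly the quantity controlling the integer generator, letting Lemma \ref{LemContent} do the final arithmetic. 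A useful sanity check throughout is the invertible case $f' = f$, where the formula must reduce to the classical $II^{-1} = R = \Z + \Z f\omega$ and $\Fitt_1(I) = R$, confirming the coefficients are normalized correctly.
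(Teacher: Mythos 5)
Your treatment of assertion $(i)$ has a genuine gap. The computation you describe---multiplying out $I \cdot \frac{1}{\Norm(I)}\sigma(I)$ and checking that products of basis elements land in $R$---establishes only the inclusion $\frac{1}{\Norm(I)}\sigma(I) \subseteq I^{-1}$, which is the direction the paper dismisses as evident. It does not establish maximality, i.e., the reverse inclusion $I^{-1} \subseteq \frac{1}{\Norm(I)}\sigma(I)$, and for non-invertible ideals no computation of the product $IJ$ can do so, because ideal multiplication does not cancel: with $f = 2$, $R = \Z[2i]$ and $I = 2\Z[i]$, both $J = R$ and $J = I^{-1} = \Z[i]$ satisfy $IJ = 2\Z[i] = II^{-1}$, yet $R \subsetneq I^{-1}$. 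The paper closes this direction with an element-chasing argument for which you have no counterpart: after reducing to the primitive case $e = 1$, take $x \in I^{-1}$, use $xa \in R$ to write $x = \frac{u + vf\omega}{a}$ with $u, v \in \Z$, rewrite $x = z - v\frac{d + f\sigma(\omega)}{a}$ where $z = \frac{u + vf\Tr(\omega) + vd}{a}$, and deduce from $x(d + f\omega) \in R$ and the fact that $(1, f\omega)$ is a $\Z$-basis of $R$ that $z \in \Z$.

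Moreover, the identity you call the key fact, $I\sigma(I) = \Norm(I)\varrho(I)$, is false under the paper's normalization $\Norm(I) = [R : I]$: the correct identity is $I\sigma(I) = \Norm(I)\,\frac{f}{f'}\,\varrho(I)$, which equals $\Norm(I)\,(R : \varrho(I))$ (your version holds only for the norm relative to $\varrho(I)$, since $[\varrho(I) : I] = \frac{f}{f'}[R : I]$). As written it contradicts the very lemma you are proving: combined with $(i)$ it would give $II^{-1} = \varrho(I)$, which is impossible when $f' \neq f$ because $II^{-1} \subseteq R \subsetneq \varrho(I)$, and which disagrees with $(ii)$, i.e., $II^{-1} = \frac{f}{f'}\varrho(I)$. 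Once this factor $f/f'$ is restored, your plan for $(ii)$---deduce $\Fitt_1(I) = II^{-1}$ from Lemma \ref{LemFitt1} (note that Lemma \ref{LemmaReduced} is unnecessary here: $R$ is a domain, so the two standard-basis generators are automatically regular), expand $\frac{1}{\Norm(I)}I\sigma(I)$ in terms of the form coefficients $a, b, c$, and invoke Lemma \ref{LemContent}---is exactly the paper's argument; the paper merely avoids the case bookkeeping you worry about by first reducing to $e = 1$ via $(eJ)^{-1} = \frac{1}{e}J^{-1}$, $\Norm(eJ) = e^2\Norm(J)$ and $\sigma(eJ) = e\sigma(J)$, after which no case distinction on $m \bmod 4$ is needed.
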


Assertion $(i)$ of Lemma \ref{LemmaIMinus} is usually stated and proven in the case of an invertible ideal $I$ of $R$. 
We provide a proof of the general case for the convenience of the reader.

\begin{sproof}{Proof of Lemma \ref{LemmaIMinus}}
We consider the standard basis $(a, d + ef \omega)$ of $I$.
By Lemma \ref{LemStandardBasis}, we can write $I = eJ$ for some primitive ideal $J$ of $R$. We claim that if assertions $(i)$ and $(ii)$ are satisfied by $J$, 
then they are satisfied by $I$. Indeed, it is trivial to check that $(eJ)^{-1} = \frac{1}{e}J^{-1}$, $\Norm(eJ) = e^2 \Norm(J)$ and $\sigma(eJ) = e \sigma(J)$.
These three facts settle the claim. Hence we can assume, without loss of generality, that $e = 1$. 

$(i)$. By definition, we have $\frac{1}{\Norm(I)}\sigma(I) = \Z + \Z \frac{d + f\sigma(\omega)}{a}$ and the inclusion $\frac{1}{\Norm(I)}\sigma(I) \subseteq I^{-1}$ is evident. 
In order to prove the reverse inclusion, let us take $x \in I^{-1}$. Since $xa \in R$, we can write $x = \frac{u + v f \omega}{a}$ for some $u,v \in \Z$. 
Let us conveniently rewrite $x$ as $x = z - v \frac{d + f\sigma(\omega)}{a}$ with $z \Doteq \frac{u + vf \Tr(\omega) + vd}{a}$. Since $x (d + f \omega) \in R$, 
we deduce that $z(d + f \omega) \in R$. The elements $1$ and $f \omega$ are linearily independent over $\Q$ and freely generate the ring $R$ as a $\Z$-module. 
Therefore  $z$ lies into $\Z$, which entails $x \in \Z + \Z\frac{d + f\sigma(\omega)}{a}$, as desired.

$(ii)$. Using the formula of assertion $(i)$, we obtain $II^{-1} = \Z a + \Z b + \Z c + \Z(d + f\omega)$.
Applying Lemma \ref{LemContent}, we get $II^{-1} = \Z\frac{f}{f'}  + \Z(d + f\omega) = \Z\frac{f}{f'}  + \Z f\omega$. 
\end{sproof}

We are now in position to complete the proof of Proposition \ref{PropFitt1OfOrderIdeal}.

\begin{sproof}{Proof of Proposition \ref{PropFitt1OfOrderIdeal}.$i$}
As $R$ is a Bass domain, it follows from Lemma \ref{LemFitt1} that $\Fitt_1(I) = II^{-1}$. 
From the inclusion $\varrho(I)II^{-1} \subseteq R$, we infer that $\Fitt_1(I) \subseteq (R : \varrho(I))$. 
We have shown in the first part of the proof of \ref{PropFitt1OfOrderIdeal}, that the index of $(R : \varrho(I))$ in $R$ is $f/f'$. 
The index of $\Fitt_1(I)$ is also $f/f'$ by Lemma \ref{LemContent}. Therefore $\Fitt_1(I) = (R : \varrho(I))$.
\end{sproof}

\begin{proposition} \label{PropENWithNGe3}
Let $I$ be an ideal of $R$. Then $\glr(I) = 2$.
\end{proposition}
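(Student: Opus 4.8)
The plan is to read $\glr(I) = 2$ off Corollary \ref{CorFreeModule}, so the bulk of the work is verifying its hypotheses for $R = \iO_f$. First, $R$ is an integral domain, being a subring of the quadratic field $K$. Second, $R$ is a one-dimensional Noetherian domain, so $\dimK(R) = 1$ and $R$ is almost of stable rank $1$, by the observation recorded after that definition that a ring of Krull dimension at most $1$ is almost of stable rank $1$. Third, $R = \Z + \Z f\omega$ is a free $\Z$-module of rank $2$, with basis $\{1, f\omega\}$, and $\Z$ is a K-Hermite ring because it is a principal ideal domain (any integer vector can be brought to the form $(0,\dots,0,d)$ by a matrix in $\GL_n(\Z)$). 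Finally, since $R$ is a Bass domain, its ideal $I$ is generated by two elements. Thus all three hypotheses of Corollary \ref{CorFreeModule} are in force.

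With these checks in place, Corollary \ref{CorFreeModule} yields that $\SL_n(R)$ acts transitively on $\V_n(I)$ for every $n > 2$, and hence so does $\GL_n(R)$. Because the linear rank may equally be computed from the action of $\SL_k(R)$, this transitivity gives the upper bound $\glr(I) \le 2$ directly from the definition. For the matching lower bound I would invoke $\glr(I) \ge \mu(I)$; for a genuinely two-generated ideal one has $\mu(I) = 2$, and combining the two inequalities produces $\glr(I) = 2$. (The only degenerate case is a principal $I$, where $I \cong R$ and the rank collapses to that of $R$; this is not the situation the statement is aimed at.)

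I do not expect a real obstacle here, since every step reduces to an already-established result. The one point deserving care is the structural input that drives Corollary \ref{CorFreeModule} through Lemma \ref{FreeOverKHermite}: one must exhibit $R$ concretely as the free $\Z$-module $\Z \oplus \Z f\omega$, so that any generating vector $\mb \in \V_n(I)$ of length $n > 2$ can be shortened, by an element of $\SL_n(R)$, to a generating pair supported in the first two coordinates. Once that presentation is recorded and the K-Hermite property of $\Z$ is noted, the conclusion is immediate.
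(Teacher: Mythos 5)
Your proposal is correct and follows essentially the same route as the paper: the paper's proof likewise notes that $R$ is a Bass domain (so $\mu(I) \le 2$), that $R$ is a one-dimensional domain and hence almost of stable rank $1$, and that $R$ is a free $\Z$-module of rank $2$, then applies Corollary \ref{CorFreeModule}. Your explicit check that $\Z$ is K-Hermite and your remark on the lower bound $\glr(I) \ge \mu(I)$ (including the degenerate principal case, which the paper glosses over by asserting $\mu(I) = 2$) only make the same argument slightly more careful.
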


\begin{proof}
As $R$ is a Bass ring, we have $\mu(I) = 2$. 
Moreover $R$ is a one-dimensional domain, hence it is almost of stable rank $1$. 
Observing eventually that $R$ is a free $\Z$-module of rank $2$, we can apply Corollary \ref{CorFreeModule}.
\end{proof}

We conclude this section with the proof of Theorem \ref{ThOrder} and a closely related result 
on the quotients $\varrho(I)^{\times} \backslash \V_2(I) /\SL_2(R)$ and $\V_2(I) /\GL_2(R)$, namely Proposition \ref{PropGL2} below.
Both results use

\begin{lemma} \label{LemNormU}
Let $I$ be an ideal of $R$. 
Then $\epsilon(I) = \{ \nK(u) + \Fitt_1(I) \, \vert \, u \in \varrho(I)^{\times}\}$ where $\nK(u)$ denotes the norm of $u$ over $\Q$. 
In particular, $\epsilon(I)$ has at most two elements.
\end{lemma}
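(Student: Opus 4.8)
The plan is to evaluate $\det(u)$ for a unit $u \in \varrho(I)^{\times} = \iO_{f'}^{\times}$ by choosing a particularly convenient generating pair and recognizing the resulting determinant as a field norm. Since $I$ is a nonzero ideal of $R = \iO_f$, it is a full lattice in $K$, hence a free $\Z$-module of rank $2$; concretely one may take the standard basis $\mb = (a, d + ef\omega)$ of Lemma \ref{LemStandardBasis}. Any such $\Z$-basis of $I$ is at once an $R$-generating pair of $I$ (its $R$-span contains its $\Z$-span, which is $I$) and a $\Q$-basis of $K = I \otimes_{\Z} \Q$, so it is a legitimate choice for computing $\det(u) = \det_{\mb}(u\mb)$, which is independent of $\mb$ by Lemma \ref{LemDetU}.

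Because $u\mb \in I^2$ and $\mb$ is a $\Z$-basis of $I$, there is a unique matrix $A \in \M_2(\Z) \subseteq \M_2(R)$ with $u\mb = \mb A$, namely the matrix of the multiplication-by-$u$ endomorphism $\mu_u$ of $I$ in the basis $\mb$. Applying Lemma \ref{LemDet}.$i$ with $\mb' = \mb$ then gives $\det_{\mb}(u\mb) = \det(A)\det_{\mb}(\mb) = \det(A) + \Fitt_1(I)$, so that $\det(u) = \det(A) + \Fitt_1(I)$.

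The crux is to identify $\det(A)$. Extending scalars from $\Z$ to $\Q$, the same matrix $A$ represents $\mu_u$ on the two-dimensional $\Q$-vector space $K$ with respect to the $\Q$-basis $\mb$; by the definition of the field norm via the regular representation, one has $\det(A) = \nK(u)$. Hence $\det(u) = \nK(u) + \Fitt_1(I)$, which establishes the asserted description $\epsilon(I) = \{ \nK(u) + \Fitt_1(I) \, \vert \, u \in \varrho(I)^{\times}\}$. I expect this passage from the module-theoretic invariant $\det(u)$ to the arithmetic norm $\nK(u)$ to be the only genuinely delicate point; once $\mb$ is taken to be a $\Z$-basis, it reduces to the standard fact that the determinant of $\mu_u$ over $\Q$ is the norm, and the remainder is bookkeeping.

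Finally, for the cardinality bound I would observe that every $u \in \iO_{f'}^{\times}$ is also a unit of the maximal order $\iO$, since both $u$ and $u^{-1}$ lie in $\iO_{f'} \subseteq \iO$; consequently $\nK(u) \in \{1, -1\}$. Therefore $\epsilon(I) \subseteq \{1 + \Fitt_1(I), \, -1 + \Fitt_1(I)\}$ has at most two elements, as claimed.
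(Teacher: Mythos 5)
Your proof is correct and follows essentially the same route as the paper: choose a generating pair of $I$ that is also a $\Z$-basis, observe that multiplication by $u$ is then given by an integer matrix $A$ with $\det_{\mb}(u\mb) = \det(A) + \Fitt_1(I)$, and identify $\det(A)$ with $\nK(u)$ via the regular representation. The paper's proof is just a terser version of this (it leaves the choice of $\Z$-basis and the bound $\nK(u) \in \{\pm 1\}$ implicit), so your write-up merely supplies details the paper omits.
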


\begin{proof}
Let $\mb \in \V_2(I)$ and let $u \in \varrho(I)^{\times}$. Let $A \in \GL_2(\Z)$ be such that $u \mb = \mb A$. 
By the definitions of $\nK(u)$ and of $\det(u)$ we have $\nK(u) = \det(A)$ and $\det(u) = \det(A) + \Fitt_1(I)$, hence the result.
\end{proof}

\begin{sproof}{Proof of Theorem \ref{ThOrder}}
Let $\kappa(R)$ be the number of cusps of $R$.
By Propositions \ref{PropP1} and \ref{PropEpsilonI}, we have $\kappa(R) = \sum_{[I] \in \Cl(R)} [(R/\Fitt_1(I))^{\times} : \epsilon(I)]$. 
We also have $\Fitt_1(R) = (R: \varrho(I))$ by Proposition \ref{PropFitt1OfOrderIdeal}.
This identity together with Definition \ref{DefEpsilonI} imply that 
$[(R/\Fitt_1(R))^{\times} : \epsilon(I)]$ depends only on $\varrho(I)$. 
Since $\varrho(I)$ is the smallest overring $\iO_{f'}$ of $R$ such that $I$ is an invertible ideal of $\iO_{f'}$ \cite[Proposition 5.8]{LW85}, we have 
$\kappa(R) = \sum_{f' \vert f}  \vert \Pic(\iO_{f'}) \vert [(R/(R: \iO_{f'}))^{\times} : U_{f'}]$ where $U_{f'}$ is described by Lemma \ref{LemNormU}. 
By Lemma \ref{LemmaIMinus}, we have $R/(R: \iO_{f'}) \simeq \Z /(f/f') \Z$. Since $U_{f'}$ has at most two elements and exactly two if $\iO_{f'}$ 
has a unit of norm $-1$ and $f/f' > 2$, we infer that 
$[(R/(R: \iO_{f'}))^{\times} : U_{f'}] = \frac{\varphi(f/f')}{2^{\epsilon(f, f')}}$, which completes the proof.
\end{sproof}

\begin{proposition} \label{PropGL2}
Let $I$ be a non-zero ideal of $R$ and let $f'$ be the positive rational integer such that $\varrho(I) = \iO_{f'}$. 
Then the following hold.
\begin{itemize}
\item[$(i)$]
$\vert \varrho(I)^{\times} \backslash \V_2(I) /\SL_2(R) \vert = [(R/\Fitt_1(I))^{\times} : \epsilon(I)] =  \frac{\varphi(f/f')}{2^{\epsilon(f, f')}}$ with $\epsilon(f, f')$ as in Theorem \ref{ThOrder}.
\item[$(ii)$]
We have $\Aut_R(I) \backslash \V_2(I) /\GL_2(R) = \V_2(I) /\GL_2(R)$. The determinant map induces a bijection of $\V_2(I) /\GL_2(R)$ 
onto the quotient of $(R/\Fitt_1(I))^{\times}$ by the image of $R^{\times}$
 through the natural map $R \rightarrow R/\Fitt_1(I)$.
\item[$(iii)$]
If $f/f'$ has at least three distinct odd rational prime divisors, then $\GL_2(R)$ doesn't act transitively on $\V_2(I)$.
\end{itemize}
\end{proposition}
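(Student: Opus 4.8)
The plan is to prove the three assertions of Proposition \ref{PropGL2} by exploiting the structure that has already been laid out, namely the explicit identification $R/\Fitt_1(I) \simeq \Z/(f/f')\Z$ from Proposition \ref{PropFitt1OfOrderIdeal}.$ii$ and the description of $\epsilon(I)$ via norms of units of $\varrho(I)$ from Lemma \ref{LemNormU}. For assertion $(i)$, I would invoke Proposition \ref{PropEpsilonI}: since $R$ is a one-dimensional domain it is almost of stable rank $1$, so the hypotheses of Lemma \ref{LemSurjectivity} are met and the determinant map induces a \emph{bijection}
$$
\varrho(I)^{\times} \backslash \V_2(I) /\SL_2(R) \xrightarrow{\ \sim\ } (R/\Fitt_1(I))^{\times} / \epsilon(I).
$$
The index $[(R/\Fitt_1(I))^{\times} : \epsilon(I)]$ is then computed exactly as in the proof of Theorem \ref{ThOrder}: the group $(R/\Fitt_1(I))^{\times} \simeq (\Z/(f/f')\Z)^{\times}$ has order $\varphi(f/f')$, while $\epsilon(I)$ has index $2^{\epsilon(f,f')}$ by Lemma \ref{LemNormU} and the computation of when $\epsilon(I)$ contains two elements.

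For assertion $(ii)$, the first equality $\Aut_R(I) \backslash \V_2(I)/\GL_2(R) = \V_2(I)/\GL_2(R)$ should follow because every $R$-automorphism of $I$ is realized, at the level of generating pairs, by right-multiplication by a matrix in $\GL_2(R)$; concretely, for $u \in \varrho(I)^{\times}$ the pair $u\mb$ is $\GL_2(R)$-equivalent to $\mb$ since $\Det(u) = \nK(u) + \Fitt_1(I)$ is already a unit coming from $\GL_2(\Z) \subseteq \GL_2(R)$, so passing from $\SL_2$ to $\GL_2$ absorbs the $\Aut_R(I)$-action. The second statement is the $\GL_2$-analogue of Theorem \ref{ThSL2AndFitt1I}: two generating pairs are $\GL_2(R)$-equivalent iff their determinants differ by the image of some unit of $R$, which follows from Lemma \ref{LemImPhi} together with the fact that $\Det_{\mb}(\mb A) = \Det(A)\Det_{\mb}(\mb)$ with $\Det(A)$ now ranging over all of $R^{\times}$ rather than just $1$. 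Thus the determinant map descends to a bijection of $\V_2(I)/\GL_2(R)$ onto $(R/\Fitt_1(I))^{\times}$ modulo the image of $R^{\times}$.

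Assertion $(iii)$ is a concrete non-triviality statement and is where the genuine arithmetic obstacle lies. By assertion $(ii)$, $\GL_2(R)$ acts transitively on $\V_2(I)$ precisely when the natural map $R^{\times} \to (R/\Fitt_1(I))^{\times} \simeq (\Z/(f/f')\Z)^{\times}$ is surjective. The image of $R^{\times}$ in $(\Z/(f/f')\Z)^{\times}$ lands in the subgroup generated by $\{\pm 1\}$-type classes coming from $\nK$, so its image is small — of order dividing a bounded power of $2$. Meanwhile $(\Z/n\Z)^{\times}$ for $n = f/f'$ with at least three distinct odd prime divisors has $2$-rank at least $3$ by the Chinese Remainder Theorem, so it cannot be covered by the image of $R^{\times}$. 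The main obstacle is pinning down the image of $R^{\times} \to (\Z/(f/f')\Z)^{\times}$ precisely enough: I expect to argue that this image is contained in a subgroup of order at most $2$ (reflecting that the only relevant contribution from units is a sign coming from the norm), and then to conclude by a parity/$2$-rank count that surjectivity fails as soon as $(\Z/(f/f')\Z)^{\times}$ has $2$-rank exceeding that of the image, which is guaranteed by the three-odd-prime hypothesis.
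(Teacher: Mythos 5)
Your treatments of assertions $(i)$ and $(ii)$ are correct and essentially coincide with the paper's proof: $(i)$ is exactly the computation already carried out in the proof of Theorem \ref{ThOrder} (via Propositions \ref{PropP1}, \ref{PropEpsilonI} and Lemma \ref{LemNormU}), and $(ii)$ combines, as the paper does, the bijectivity of the determinant on $\SL_2$-orbits (Corollary \ref{CorCompleteInvariant1}) with the observation that $\det_{\mb}\bigl(\mb \, \diag(u,1)\bigr) = u + \Fitt_1(I)$ for $u \in R^{\times}$. One point you gloss over in $(ii)$: the identification of $\Aut_R(I)$ with $\varrho(I)^{\times}$, which justifies reducing the $\Aut_R(I)$-action to multiplication by units $u \in \varrho(I)^{\times}$, is not automatic; it holds because $I$ is invertible as an ideal of its ring of multipliers, a fact the paper takes from \cite[Proposition 5.8]{LW85}.

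The genuine gap is in $(iii)$. You claim that the image of $R^{\times}$ under the reduction map $R \rightarrow R/\Fitt_1(I) \simeq \Z/(f/f')\Z$ ``lands in the subgroup generated by $\pm 1$-type classes coming from $\nK$'' and is ``contained in a subgroup of order at most $2$.'' This conflates two different maps: the reduction of a unit modulo $\Fitt_1(I)$ is \emph{not} its norm. Lemma \ref{LemNormU} computes $\epsilon(I)$, i.e.\ the \emph{determinants} $\det(u)$, as norms; but the quotient $Q$ appearing in $(ii)$ is by the \emph{reduction} image of $R^{\times}$, which is in general much larger than $\{\pm 1 + \Fitt_1(I)\}$. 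Concretely, take $m = 2$, $f = 15$, $f' = 1$: the fundamental unit of $\iO_{15} = \Z + 15\Z\sqrt{2}$ is $(1+\sqrt 2)^{12} = 19601 + 13860\sqrt{2}$, whose reduction modulo $\Fitt_1(I)$ is $11 \in (\Z/15\Z)^{\times}$, so the image of $R^{\times}$ is $\{1, 4, 11, 14\}$, of order $4$; your ``order at most $2$'' claim fails. The paper avoids this entirely by a generator count: $R^{\times}$ is generated by $-1$ and a power of the fundamental unit of $K$, hence by at most two elements, so its image in $(R/\Fitt_1(I))^{\times}$ is generated by at most two elements; on the other hand, when $f/f'$ has at least three distinct odd prime divisors, the Chinese Remainder Theorem shows that $(\Z/(f/f')\Z)^{\times}$ surjects onto $(\Z/2\Z)^3$ and therefore cannot be generated by fewer than three elements; hence the image of $R^{\times}$ is a proper subgroup and $Q \neq 1$. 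Your $2$-rank strategy could in fact be salvaged: writing $u = a + bf\omega \in R^{\times}$, one checks $u \equiv a$ and $\nK(u) \equiv a^2 \pmod{f/f'}$, so the image of $R^{\times}$ lies in $\{x \, \vert \, x^2 = \pm 1\}$; but this congruence computation is absent from your proposal, and without it (or the paper's generator count, which also needs the structure of $R^{\times}$ you never invoke) the argument does not close.
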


\begin{proof}
Assertion $(i)$ has been already established in the course of the proof of Theorem \ref{ThOrder}. 

$(ii)$. 
Since $I$ is an invertible ideal of $\varrho(I)$, every $R$-automorphism of $I$ is induced by the multiplication by a unit of $\varrho(I)$. 
Thus we have $$\Aut_R(I) \backslash V_2(I) /\GL_2(R) = \varrho(I)^{\times} \backslash \V_2(I) /\GL_2(R)$$ 
and the identity $\varrho(I)^{\times} \backslash \V_2(I) /\GL_2(R) = \V_2(I) /\GL_2(R)$ follows from Lemma \ref{LemNormU}.
Consider now $\mb \in \V_2(I)$. Then for every $u \in R^{\times}$ we have 
$$\Det_{\mb}(\mb \begin{pmatrix} u & 0 \\ 0 & 1 \end{pmatrix}) = u + \Fitt_1(I).$$ 
Since the determinant map induces a bijection from 
$\V_2(R)/\SL_2(R)$ onto the group $(R/\Fitt_1(I))^{\times}$ by Corollary \ref{CorCompleteInvariant1}, it induces a bijection from 
$\V_2(I)/\GL_2(R)$ onto the quotient group $Q \Doteq (R/\Fitt_1(I))^{\times}/(R^{\times} + \Fitt_1(I))$.

$(iii)$. 
Since $(R/\Fitt_1(I))^{\times} \simeq (\Z/(f/f')\Z)^{\times}$, it follows easily from the Chinese Remainder Theorem that $(R/\Fitt_1(I))^{\times}$ surjects onto $(\Z/2\Z)^3$. 
Hence $(R/\Fitt_1(I))^{\times}$ cannot be generated by less than three elements. 
As $R^{\times}$ is generated by $-1$ and a power of the fundamental unit of $K$, the group $Q$ defined in the proof of $(ii)$ is not trivial.
\end{proof}

\subsection{Ideals in the coordinate ring of the curve $x^2 = y^n$} \label{SecCurves}
This section is dedicated to the proof of Proposition \ref{PropCurves} which illustrates  
Theorem \ref{ThSL2} in the coordinate ring $R$ of the curve $x^2 = y^n$ over $K$ with $n \ge 3$ an odd integer and $K$ an arbitrary field.
The coordinate ring of an algebraic curve is a Bass domain if and only if the only singularities of the curve are double points \cite[Theorem 2.1]{Grei82}.
This is well-known to hold for $R$. 
We identify $R$ with $K[x^2, x^n]$, a notation which stays in effect throughout this section. 
The integral closure of $R$ in its field of fractions is $K[x]$ and we have 
$R = K[x^2] + K[x] x^{n - 1} = K[x^2] + K[x^2]x^n$ and $(R:K[x]) = K[x]x^{n - 1}$, i.e., the conductor of $R$ in $K[x]$ is $K[x]x^{n - 1}$.

We compute first $\Fitt_1(I)$ for $I$ a non-zero ideal of $R$. This is achieved by means of the next two lemmas.
\begin{lemma} \label{LemStandardBasisCurves}
Let $I$ be a non-zero ideal of $R$ and let $d(x) \in K[x]$ be a generator of $K[x]I$. Then there is an even integer $\nu$ satisfying $0 \le \nu  \le n - 1$ 
and two coprime polynomials $p(x) = \sum_i p_i x^i, \,q(x) = \sum_i q_i x^i$ of $K[x]$ such that the following hold:
\begin{itemize}
\item[$(i)$] $R p(x) + R q(x) = \frac{1}{d(x)}I$,
\item[$(ii)$] $p_0 q_{\nu + 1} - p_1 q_{\nu} \neq 0$ if $\nu < n - 1$,
\item[$(iii)$] $p_0 \neq 0$ and $q_i = 0$ for every $i < \nu$. 
\end{itemize}
\end{lemma}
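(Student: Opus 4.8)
```latex
\begin{sproof}{Proof proposal for Lemma \ref{LemStandardBasisCurves}}
The plan is to work in the principal ideal domain $K[x]$, where $K[x]I = K[x]d(x)$, and transfer the resulting structure back to the subring $R = K[x^2, x^n]$. First I would normalize by replacing $I$ with $\frac{1}{d(x)}I$, a fractional ideal of $R$ whose extension to $K[x]$ is all of $K[x]$; this is exactly the content of assertion $(i)$, and it explains why $p(x)$ and $q(x)$ should be taken coprime in $K[x]$. Since $\mu(I) \le 2$ in a Bass domain, the fractional ideal $\frac{1}{d(x)}I$ is generated over $R$ by two polynomials, so the existence of \emph{some} coprime generating pair $(p, q)$ is immediate; the real work is in arranging the normalizations recorded in $(ii)$ and $(iii)$.

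Next I would define $\nu$ as the $x$-adic valuation of the ``$q$-part'' of the pair after a suitable $R$-linear change of generators. The idea is that, using the two generators and multiplication by $x^2, x^n \in R$ (the monomials that generate $R$ as a $K$-algebra), one can perform $R$-elementary operations on the pair $(p, q)$ to clear low-order terms. Concretely, I would normalize so that $p_0 \ne 0$ (after swapping or adding multiples, one generator must have nonzero constant term since the pair generates a fractional ideal containing a unit of $K[x]$), then subtract a suitable $R$-multiple of $p$ from $q$ to make the lowest surviving term of $q$ have degree exactly $\nu$; this gives $q_i = 0$ for $i < \nu$, which is assertion $(iii)$. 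The constraint that $R$ only contains even powers of $x$ together with $x^n$ forces $\nu$ to be even and bounded by $n - 1$: any reduction can only subtract multiples coming from $K[x^2] + K[x^2]x^n$, so the conductor exponent $n-1$ caps how far the clearing can proceed, which is where the bound $0 \le \nu \le n - 1$ comes from.

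For assertion $(ii)$, the inequality $p_0 q_{\nu + 1} - p_1 q_\nu \ne 0$ when $\nu < n - 1$ should be read as a minimality or genericity condition ensuring the pair is not further reducible; I would establish it by contradiction, showing that if this $2 \times 2$ determinant vanished, one could apply one more $R$-elementary reduction lowering $\nu$ (or lowering the degree profile), contradicting the minimal choice of $\nu$. The coprimality of $p$ and $q$ in $K[x]$ is essential here: it guarantees that $p$ and $q$ cannot share the factor $x$ to high order, so the relevant low-degree coefficients cannot all vanish simultaneously.

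The main obstacle I anticipate is the bookkeeping of exactly which $R$-elementary operations are available, since $R$ is \emph{not} all of $K[x]$ and one may only multiply by elements of $R = K[x^2] + K[x^2]x^n$; in particular odd-degree multipliers below $x^n$ are forbidden. Getting the parity of $\nu$ and the sharp bound $\nu \le n-1$ right, and verifying that the normalization in $(iii)$ can always be achieved without destroying $p_0 \ne 0$, will require careful tracking of degrees modulo this monomial structure. I expect the proof to reduce, after these normalizations, to an essentially combinatorial statement about the leading coefficients, at which point assertion $(ii)$ follows from the minimality of $\nu$.
\end{sproof}
```
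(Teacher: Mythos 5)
Your plan follows the same route as the paper's proof: divide two generators of $I$ by $d(x)$ to get a coprime pair $(p, p')$ with $p_0 \neq 0$, then repeatedly subtract multiples $\lambda x^{2k} p$ (the only multipliers available in $R$ below degree $n$) from the second generator to push its $x$-valuation up, and read off $(ii)$ at the point where this clearing gets stuck. The paper implements exactly this by choosing $q$ in the coset $p' + Rp$ of maximal $x$-valuation and observing that $p_0 q_{m+1} - p_1 q_m = 0$ would permit one more clearing step $q \mapsto q - \lambda x^{m} p$, contradicting maximality. However, two points of your sketch are wrong as stated. First, your extremal principle is inverted: you invoke the ``minimality of $\nu$'' and a reduction ``lowering $\nu$'', but a failure of $(ii)$ lets you \emph{raise} the valuation of $q$ (kill the coefficients at $x^{\nu}$ and $x^{\nu+1}$ simultaneously), so the contradiction is with maximality of the clearing, not minimality. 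Second, your mechanism for the bound $\nu \le n-1$ --- that the conductor exponent ``caps how far the clearing can proceed'' --- is backwards: once the valuation of $q$ reaches $n-1$, \emph{every} monomial $x^{v}$ with $v \ge n-1$ lies in $R$, so clearing is completely unobstructed there and can continue indefinitely. The bound actually comes from a case split: either the process stalls strictly below $n-1$, which forces $\nu \le n-3$ (recall $\nu$ and $n-1$ are even) and yields $(ii)$, or the coset $p' + Rp$ contains an element of valuation at least $n-1$, in which case one takes such an element as $q$ and sets $\nu = n-1$, for which $(ii)$ is vacuous.

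There is also a genuine gap which your ``clear until stuck'' picture does not address (and which the paper's own proof glosses over by positing a coset element of maximal valuation): the clearing process need not terminate, i.e.\ an element of maximal valuation in $p' + Rp$ need not exist even when $0$ is not in that coset. For $n = 3$, $p = 1 - x^2$, $p' = 1$, one has
$$
1 - (1 + x^2 + \cdots + x^{2k})(1 - x^2) = x^{2k+2} \in 1 + R(1 - x^2)
\quad \text{for every } k,
$$
so the coset contains elements of unbounded valuation, yet $1 \notin R(1 - x^2)$. A complete proof must therefore handle the unbounded case explicitly --- for instance by picking $q$ in the coset of valuation at least $n-1$ and taking $\nu = n-1$, exactly as in the case split above --- and your appeal to a terminating reduction with a well-defined stopping point $\nu$ does not supply this.
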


The \emph{$x$-valuation} of a polynomial $p(x) \in K[x]$ is the greatest integer $v \ge 0$ such that $x^{v}$ divides $p(x)$.

\begin{proof}
Let $p(x), p'(x)$ be the quotients of two generators of $I$ by $d(x)$. Then $p(x)$ and $p'(x)$ are coprime in $K[x]$. Swapping them if need be, we can suppose that $p(0) \neq 0$. 
Let $q(x) \in p(x) + Rp'(x)$ be a polynomial with maximal $x$-valuation. If $q(x) = 0$, the result is obvious. Thus we can assume that $q(x) \neq 0$.
Let $m$ be the greatest even integer such that $q_m$ or $q_{m + 1}$ is non-zero. Then we have $p_0 q_{m + 1} - p_1 q_m \neq 0$ since otherwise 
$q(x) - \lambda x^m p(x)$ would have an $x$-valuation greater than the $x$-valuation of $q(x)$ for some $\lambda \in K$. 
Setting $\nu = \min(m, n - 3)$, we easily check that $\nu, p(x)$ and $q(x)$ satisfy 
the conditions $(i)$, $(ii)$ and $(iii)$.
\end{proof}

\begin{lemma} \label{LemFittNu}
Let $I$ be an ideal of $R$ and let $p(x), q(x)$ and $\nu$ be as in Lemma \ref{LemStandardBasisCurves}. 
Then $$\Fitt_1(I) = K[x^2]x^{n - \nu - 1} + K[x] x^{n - 1}.$$
In particular $\nu$ is uniquely determined by $I$.
\end{lemma}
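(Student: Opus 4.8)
The plan is to transport the computation to the fractional ideal $J \Doteq \frac{1}{d(x)}I = Rp(x) + Rq(x)$ produced by Lemma \ref{LemStandardBasisCurves}. Since $I = d(x)J$ is generated by the two regular elements $d(x)p(x)$ and $d(x)q(x)$, Lemma \ref{LemFitt1} gives $\Fitt_1(I) = II^{-1} = JJ^{-1}$. The first step is to identify $J^{-1}$ concretely: writing $1 = u(x)p(x) + v(x)q(x)$ in the PID $K[x]$ (possible since $p,q$ are coprime), every $s \in J^{-1} \subseteq K(R) = K(x)$ satisfies $s = u\cdot(sp) + v\cdot(sq) \in K[x]$, whence
\[
J^{-1} = T \Doteq \{ s \in K[x] \mid sp \in R \text{ and } sq \in R \}.
\]
Consequently $\Fitt_1(I) = JJ^{-1} = pT + qT$, and the whole problem reduces to understanding this ideal.

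Next I would pin down the high-degree part for free and reduce modulo the conductor. By Lemma \ref{LemPID} applied with $S = K[x]$, the conductor $\mathfrak{c} \Doteq (R : K[x]) = K[x]x^{n-1}$ is contained in $\Fitt_1(I)$, which already accounts for the summand $K[x]x^{n-1}$. It therefore suffices to compute the image of $\Fitt_1(I)$ in $\overline{R} \Doteq R/\mathfrak{c} \cong K[x^2]/K[x^2]x^{n-1}$. This $\overline{R}$ is a local chain ring whose only ideals are the $\overline{R}\,x^{2k}$, so $\Fitt_1(I) \bmod \mathfrak{c} = (x^{w})$, where $w$ is the least $x$-valuation (written $\operatorname{val}$ below) occurring among the nonzero elements of $pT + qT$. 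Since $p_0 \neq 0$ forces $\operatorname{val}(p) = 0$ while $\operatorname{val}(q) \ge \nu$ by condition $(iii)$, the least valuation of $pT + qT$ equals the least valuation of $T$ itself, and the entire lemma comes down to the single assertion $w = n - \nu - 1$ (the relevant valuations being even, just as $n-\nu-1$ is, because $\nu$ is even).

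The heart of the matter is this valuation estimate, which I expect to be the main obstacle. For the upper bound $w \le n - \nu - 1$ I would build an explicit $s \in T$ of valuation $n - \nu - 1$: set $s_{n - \nu - 1} = 1$ and, using $p_0 \neq 0$, solve recursively for the odd-indexed coefficients (setting the remaining even-indexed ones to zero) so that every odd coefficient of $ps$ in degrees $\le n - 2$ vanishes, i.e. $ps \in R$; since $\operatorname{val}(sq) \ge (n - \nu - 1) + \nu = n - 1$, one gets $sq \in \mathfrak{c} \subseteq R$ automatically, so $s \in T$ and $ps \in \Fitt_1(I)$ has valuation $n - \nu - 1$. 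For the lower bound I would argue by contradiction: if some $s \in T$ had even valuation $w \le n - \nu - 3$, then both $w + 1$ and $w + \nu + 1$ are odd integers in $[1, n - 2]$, so the corresponding coefficients of $ps$ and of $qs$ must vanish; using $q_i = 0$ for $i < \nu$ (condition $(iii)$), these two conditions read as the homogeneous system $\left(\begin{smallmatrix} p_1 & p_0 \\ q_{\nu + 1} & q_{\nu} \end{smallmatrix}\right)\left(\begin{smallmatrix} s_w \\ s_{w+1}\end{smallmatrix}\right) = 0$, whose determinant is $-(p_0 q_{\nu + 1} - p_1 q_{\nu}) \neq 0$ by condition $(ii)$. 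This forces $s_w = 0$, contradicting $\operatorname{val}(s) = w$. Hence $w = n - \nu - 1$, giving the stated formula; the final clause follows at once, since distinct even values of $\nu$ yield distinct ideals $\Fitt_1(I)$, so $\nu$ is recovered from $I$.
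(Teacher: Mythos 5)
Your proof is correct and follows essentially the same route as the paper's: both arguments reduce $\Fitt_1(I)$ to the ideal $pT+qT$ where $T=\{h\in K[x]\;:\;hp\in R,\ hq\in R\}$ (you via Lemma \ref{LemFitt1} and the identification $J^{-1}=T$, the paper via the relations description of Lemma \ref{LemFittMuMinusOne}), then use the conductor containment and the chain ring $R/K[x]x^{n-1}$ to reduce everything to the least valuation in $T$, and finally pin that valuation to $n-\nu-1$ using $p_0\neq 0$ for the explicit construction and the nonvanishing determinant of condition $(ii)$ for the lower bound. Your localized $2\times 2$ argument at the minimal valuation is just a compact form of the paper's inversion of the block-triangular matrix $Q$, whose diagonal blocks are exactly your matrix $\left(\begin{smallmatrix} p_1 & p_0 \\ q_{\nu+1} & q_{\nu} \end{smallmatrix}\right)$.
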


\begin{proof}
As $p(x)$ and $q(x)$ are coprime elements of $K[x]$, it easily follows from Lemma \ref{LemFittMuMinusOne} that $\Fitt_1(I)$ is 
generated by the elements of the form $h(x)p(x)$ and $h(x)q(x)$ with $h(x) \in K[x]$ such that
\begin{equation} \label{EqH}
h(x)p(x)\in R, h(x)q(x) \in R.
\end{equation}
Hence $\Fitt_1(I)$ contains the conductor $K[x]x^{n - 1}$ of $R$ in $K[x]$. Since 
$$R/K[x]x^{n - 1} \simeq K[x^2] / K[x^2]x^{n - 1}$$ is a local principal ideal ring 
whose maximal ideal is generated by the image of $x^2$, we infer that $\Fitt_1(I) = K[x^2]x^{\nu'} + K[x] x^{n - 1}$ for some even integer $\nu' \ge 0$. 
We claim that every polynomial $h(x)$ satisfying $(\ref{EqH})$ has an $x$-valuation at least $n - \nu - 1$ and  there is such an $h(x)$ with an $x$-valuation equal to $n - \nu - 1$. 
It immediately follows from the claim that $\nu' = n- \nu - 1$, hence the result.

In order to prove the claim, we consider a polynomial $h(x) = \sum_i h_i x^i \in K[x]$.
and let $\hb = (h_0, h_1, \dots, h_{n - 2})$. Then $h(x)$ satisfies $(\ref{EqH})$ if and only if $A\hb = 0$ where $A$ is the $(n - 1) \times (n - 1)$ matrix obtained by appending 
the last $\frac{\nu}{2}$ rows of the matrix
$
P = 
\begin{pmatrix}
p_1 & p_0 && &&&\\
p_3 & p_2  & p_1 & p_0 &&&\\
\vdots & \vdots & \vdots & \vdots &&&\\
p_{n - 2} & p_{n  - 3}  & p_{n - 4} & p_{n - 5} & \cdots & p_1 & p_0\\
\end{pmatrix}
$
to 
$$
Q =
\begin{pmatrix}
q_{\nu + 1} & q_{\nu} &&&&&&&\\
p_1 & p_0  && &&&&&\\
q_{\nu + 3} & q_{\nu + 2} & q_{\nu + 1}  & q_{\nu} &&&&&\\
p_3 & p_2  & p_1 &  p_0 &&&&&\\
\vdots & \vdots & \vdots & \vdots &&&&&\\
q_{n - 2} & q_{n - 3} & q_{n - 4} & q_{n - 5} & \cdots & q_{\nu + 1} & q_{\nu} & \\
p_{n - \nu -2} & p_{n - \nu - 3}  & p_{n - \nu - 4} & p_{n - \nu - 5} & \cdots & p_0  & p_1\\
\end{pmatrix}
$$
where every unspecified coefficient is zero.

Let $h(x)$ be such that $A \hb = 0$.
If $\nu < n - 1$, then $p_0 q_{\nu + 1} - p_1 q_{\nu} \neq 0$, so that $Q$ is invertible. Otherwise $Q$ is the $0 \times 0$ 
matrix and hence is invertible as well.  
As a result, we have $(h_0, \dots, h_{n - \nu - 2}) = 0$, i.e., the $x$-valuation of $h(x)$ is at least $n - \nu - 1$. 
The equation $A \hb = 0$ is equivalent to $(h_0, \dots, h_{n - \nu - 2}) = 0$ and
$A' \hb' = 0$ where $\hb' = (h_{n - \nu  - 1}, \dots, h_{n - 2})$ and $A'$ consists in the first $\frac{\nu}{2}$ rows of $P$. 
Since the latter equation clearly admits a solution $\hb'$ with $h_{n - \nu - 1} \neq 0$, 
we can find a polynomial $h(x)$ which satisfies $(\ref{EqH})$ and whose $x$-valuation is exactly $n - \nu - 1$.
\end{proof}

\begin{lemma} \label{LemRhoI}
Let $I$ be an ideal of $R$ and let $\nu$ be as in Lemma \ref{LemFittNu}. 
Then we have $\varrho(I) = K[x^2] + K[x]x^{\nu}$ and $\Fitt_1(I) = (\varrho(I) : R)$.
\end{lemma}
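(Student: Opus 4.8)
The plan is to prove both assertions at once from the explicit shape of $\Fitt_1(I)$ supplied by Lemma \ref{LemFittNu}, together with the fact that $R$ is Gorenstein, thereby bypassing any further manipulation of the standard basis $p,q$. Set $T \Doteq K[x^2] + K[x]x^{\nu}$ and decompose the normalization into even and odd parts, $K[x] = K[x^2] \oplus xK[x^2]$. In these terms $R = K[x^2] \oplus x^{n}K[x^2]$, and $T = K[x^2] \oplus x^{\nu + 1}K[x^2]$ (here I use that $\nu$ is even), while Lemma \ref{LemFittNu} gives $\Fitt_1(I) = x^{n - \nu - 1}K[x^2] \oplus x^{n}K[x^2]$. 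Each of these three modules is generated over $K[x^2]$ by its lowest even and lowest odd monomial, so the colon computations reduce to comparing $x$-valuations of the odd parts, and a routine bookkeeping yields the two formal identities
$$(R : T) = \Fitt_1(I) \qquad\text{and}\qquad (\Fitt_1(I) : \Fitt_1(I)) = T.$$
This is the only explicit computation I need.

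With these identities the inclusion $\varrho(I) \subseteq T$ is immediate. Since $R$ is a domain, $I$ is generated by two regular elements, so $\Fitt_1(I) = II^{-1}$ by Lemma \ref{LemFitt1}; and as $\varrho(I)I = I$ we get $\varrho(I)\,\Fitt_1(I) = \Fitt_1(I)$, whence $\varrho(I) \subseteq (\Fitt_1(I) : \Fitt_1(I)) = T$. The reverse inclusion $T \subseteq \varrho(I)$ carries the real content. Because $\Fitt_1(I) = (R : T)$ is the conductor of $R$ in $T$, it is an ideal of $T$, so $T\,\Fitt_1(I) \subseteq R$; rewriting $\Fitt_1(I) = II^{-1}$ gives $(TI)I^{-1} \subseteq R$, that is $TI \subseteq (R : I^{-1}) = (R : (R : I))$. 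Now $R = K[x^2,x^n]$ is the coordinate ring of a plane curve, hence a one-dimensional Gorenstein domain, so every nonzero fractional ideal is divisorial; in particular $(R:(R:I)) = I$, and therefore $TI \subseteq I$, i.e. $T \subseteq \varrho(I)$.

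Combining the two inclusions gives $\varrho(I) = T = K[x^2] + K[x]x^{\nu}$, and then $(R : \varrho(I)) = (R : T) = \Fitt_1(I)$ by the first identity above, which is the asserted conductor formula (read as $(R:\varrho(I))$, consistent with Proposition \ref{PropCurves}.$ii$). The main obstacle is precisely the inclusion $T \subseteq \varrho(I)$, which I settle by invoking divisoriality of ideals in the Gorenstein ring $R$. If one prefers to remain elementary, the same inclusion is obtainable by hand: one first shows that the conductor $x^{n-1}K[x] = (R : \tilde{R})$ lies inside $\tfrac{1}{d(x)}I = Rp + Rq$ (using $p_0 \neq 0$, so that $p,q$ are coprime and a B\'ezout relation pushes $x^{n-1}K[x]$ in), and then verifies $x^{\nu + 1}p,\, x^{\nu+1}q \in Rp + Rq$ modulo this conductor by a finite linear-algebra computation that uses exactly the nondegeneracy $p_0 q_{\nu + 1} - p_1 q_{\nu} \neq 0$ and the vanishing $q_i = 0$ for $i < \nu$ from Lemma \ref{LemStandardBasisCurves}.
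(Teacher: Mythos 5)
Your proof is correct, but it follows a genuinely different route from the paper's. The paper first classifies all $R$-modules lying between $R$ and the normalization $K[x]$ (a length/uniseriality argument showing they form the single chain $K[x^2]+K[x]x^{\nu'}$, $\nu'$ even), so that $\varrho(I)$ is pinned down by the least odd $i$ with $x^iI \subseteq I$; it then computes this least exponent to be $\nu+1$ by a second explicit linear-algebra computation with the standard basis $(p,q)$ of Lemma \ref{LemStandardBasisCurves}. You avoid both steps: from the even/odd decomposition you verify the two identities $(R:T)=\Fitt_1(I)$ and $(\Fitt_1(I):\Fitt_1(I))=T$ (I checked; with $n$ odd and $\nu$ even these are indeed routine), you deduce $\varrho(I)\subseteq T$ from $\Fitt_1(I)=II^{-1}$ (Lemma \ref{LemFitt1}, applicable since $I\neq 0$ and $R$ is a domain) because $\varrho(I)$ stabilizes $II^{-1}$, and you deduce $T\subseteq\varrho(I)$ from $T(R:T)\subseteq R$ together with divisoriality, $TI\subseteq (R:(R:I))=I$. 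The one external ingredient is the classical theorem of Bass that every fractional ideal containing a regular element in a one-dimensional Gorenstein Noetherian domain is divisorial; this is true and applies here since $R\simeq K[X,Y]/(Y^2-X^n)$ is a hypersurface, but it is nowhere quoted in the paper, so you should cite it explicitly (Bass, ``On the ubiquity of Gorenstein rings'', Theorem 6.3). What your argument buys: it is shorter, it replaces the paper's second matrix computation by structural commutative algebra, and it incidentally establishes $\varrho(\Fitt_1(I))=\varrho(I)$ for this ring, one of the observations the paper raises as a question in its final remarks. What the paper's argument buys: it is elementary and self-contained (only linear algebra over $K$), and it yields as a by-product the classification of all overrings of $R$ inside $K[x]$. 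Finally, you are right that the statement's $(\varrho(I):R)$ must be read as the conductor $(R:\varrho(I))$, consistently with Proposition \ref{PropCurves}; under the paper's literal colon convention the displayed formula would collapse to $R$.
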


\begin{proof}
Let us prove first that every proper $R$-submodule of $K[x]$ containing $R$ is of the form $K[x^2] + K[x]x^{\nu'}$ for some even integer $\nu' > 0$. 
Clearly, there are exactly $\frac{n - 1}{2}$ modules of the latter form. Hence it suffices to show that $K[x] / R$ has length $\frac{n - 1}{2}$. 
This follows from the isomorphisms $K[x] /R \simeq R / (R: K[x]) \simeq K[x^2] / K[x^2] x^{n - 1}$. The claim is therefore established and we have
$\varrho(I) =  K[x^2] + K[x]x^{\nu'}$ for some even $\nu' \ge 0$. Clearly $\nu' + 1$ is the smallest odd integer $i > 0$ such that $x^i I \subset I$. 
To complete the proof, we show that $\nu + 1$ enjoys the same property. 
Let $\mathcal{B}$ be the $K$-basis of $K[x] / K[x]x^{n - 1}$ consisting of the images of $1, x, \dots, x^{n - 2}$.
We shall consider an $(n - 1) \times (n - 1)$ matrix $A$ over $K$ such that 
the two following are equivalent for every odd integer $i > 0$:
\begin{itemize}
\item $x^i p(x) \in I/d(x)$,
\item the linear system $A\hb = \bb_i$ has a solution $\hb \in K^{n - 1}$ 
where $\bb_i$ is the component vector of the image of $x^i p(x)$ in $K[x] / K[x]x^{n - 1}$ with respect to $\mathcal{B}$.
\end{itemize}

Let $A$ be the matrix whose columns are the component vectors of the images in $K[x] / K[x]x^{n - 1}$ of the polynomials
$$
\begin{array}{cc}
p(x), x^2p(x), \dots, x^{\nu - 2}p(x), \nonumber \\
x^{\nu}p(x), q(x), x^{\nu + 2}p(x), x^2q(x), \dots, x^{n - 3}p(x), x^{n - 3 - \nu}q(x) \nonumber
\end{array}
$$
with respect to $\mathcal{B}$.  
Since $I/d(x)$ contains $K[x]x^{n - 1}$, the matrix $A$ has the desired property. Moreover $A$ is of the form 
$
\begin{pmatrix}
A' & 0 \\
\ast & Q
\end{pmatrix}
$
with 
$$A' = 
\begin{pmatrix}
p_0 &&& &&\\
p_1 &&&&&\\
p_2 & p_0 &&&&\\
p_3 & p_1 &&&&\\
\vdots & \vdots & \ddots & \ddots &&\\
p_{\nu - 2} & p_{\nu  - 4}  & p_{\nu - 6} & \cdots &  \cdots  & p_0\\
p_{\nu - 1} & p_{\nu  - 3}  & p_{\nu - 5} & \cdots &  \cdots  & p_1\\
\end{pmatrix}
$$ 
and where $Q$ is a lower block-trigonal matrix with diagonal blocks all equal to the invertible matrix
$
\begin{pmatrix}
p_0 & q_{\nu} \\
p_1 & q_{\nu + 1}
\end{pmatrix}
$.
As $Q$ is invertible, the system $A \hb = \bb$ with $\hb = (h_0,\dots, h_{n - 2}), \bb = (b_0, \dots, b_{n - 2})$ has a solution if and only if $A' \hb' = \bb'$ has a solution where 
$\hb' = (h_0, \dots, h_{\nu - 1})$ and $\bb' = (b_0, \dots, b_{\nu - 1})$. 
Let $i > 0$ be an odd integer.
It is easy to check that the system $A'\hb' = \bb_i'$ has a solution if and only if $i \ge \nu + 1$. 
Let $\cb_{\nu + 1}$ be the component vector of the image of $x^{\nu + 1}q(x)$. It is also easy to check that $A\hb = \cb_{\nu + 1}$ has a solution, which means that 
$x^{\nu + 1}q(x) \in I/d(x)$. Therefore $x^i I \subset I$ holds true if and only if $i \ge \nu + 1$.
\end{proof}

\begin{proposition} \label{PropSL}
Let $I$ be an ideal of $R$. Then $\glr(I) = 2$.
\end{proposition}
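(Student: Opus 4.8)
The plan is to deduce Proposition \ref{PropSL} from Corollary \ref{CorFreeModule}, exactly as Proposition \ref{PropENWithNGe3} was obtained for orders in quadratic fields. To apply that corollary I must check its hypotheses for $R = K[x^2, x^n]$: that $R$ is an integral domain almost of stable rank $1$, and that $R$ is a free module of rank $2$ over a K-Hermite subring $S$. The two-generation of $I$ comes for free, since $R$ is a Bass domain.

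First I would record the two cheap inputs. As recalled at the start of this section, $R$ is the coordinate ring of a curve whose only singularity is a double point, hence a Bass domain; in particular $\mu(I) \le 2$. Moreover $R$ is a one-dimensional integral domain, so it is almost of stable rank $1$ by the observation following that definition (Krull dimension at most $1$ forces almost stable rank $1$).

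The only substantive verification is the free-module structure. I take $S = K[x^2]$. Since $n$ is odd, $(x^n)^2 = (x^2)^n \in S$, so $x^n$ is integral of degree $2$ over $S$ and $R = S + S x^n$. To see the sum is direct, consider a relation $a(x^2) + b(x^2)\,x^n = 0$ inside $K[x]$ and separate it by parity of the exponents: the term $a(x^2)$ contributes only even-degree monomials while $b(x^2)\,x^n$ contributes only odd-degree ones (because $n$ is odd), so both summands vanish and $a = b = 0$. Hence $\{1, x^n\}$ is a free $S$-basis and $R$ is free of rank $2$ over $S$. Finally $S \cong K[t]$ is a polynomial ring in one variable over a field, thus a principal ideal domain and in particular K-Hermite.

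With all hypotheses in place, Corollary \ref{CorFreeModule} gives that $\SL_k(R)$ acts transitively on $\V_k(I)$ for every $k > 2$, whence $\glr(I) = 2$. I anticipate no real obstacle here: the whole content reduces to the parity argument establishing the direct decomposition $R = S \oplus S x^n$, after which the general machinery of Section \ref{SecConditions} does the work.
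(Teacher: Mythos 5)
Your proof is correct and follows exactly the paper's own argument: verify that $R$ is a Bass domain (so $\mu(I)\le 2$), that $R$ is a one-dimensional integral domain (hence almost of stable rank $1$), and that $R$ is free of rank $2$ over the K-Hermite subring $K[x^2]$, then invoke Corollary \ref{CorFreeModule}. The only difference is that you spell out the parity argument for $R = K[x^2] \oplus K[x^2]x^n$, which the paper treats as already established at the start of Section \ref{SecCurves}.
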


\begin{proof}
Since $R$ is a Bass ring, we have $\mu(I) \le 2$.
As $R$ is a one-dimensional, it is almost of stable rank $1$. 
Since $R$ is moreover an integral domain which is a free $K[x^2]$-module of rank $2$, Corollary \ref{CorFreeModule} applies.
\end{proof}

\bibliographystyle{alpha}
\bibliography{Biblio}

\end{document}